\numberwithin{equation}{section}       
\theoremstyle{plain}
\newtheorem{prop}{Proposition}[section]
\newtheorem{coro}[prop]{Corollary}
\newtheorem{lemm}[prop]{Lemma}
\newtheorem{theoalph}{Theorem}
\newtheorem{propalph}[theoalph]{Proposition}
\theoremstyle{definition}
\newtheorem{defi}[prop]{Definition}
\theoremstyle{remark}
\newtheoremstyle{citing}
  {3pt}
  {3pt}
  {\itshape}
  {}
  {\bfseries}
  {.}
  {.5em}
  {\thmnote{#3}}
\theoremstyle{citing}
\newtheorem*{generic}{}
\newcommand{\partn}[1]{{\smallskip \noindent \textbf{#1.}}}
\DeclareMathAlphabet{\mathpzc}{OT1}{pzc}{m}{it} 
\newcommand{\C}{\mathbb{C}}
\newcommand{\D}{\mathbb{D}}
\newcommand{\N}{\mathbb{N}}
\newcommand{\R}{\mathbb{R}}
\newcommand{\Z}{\mathbb{Z}}
\newcommand{\cE}{\mathcal{E}}
\newcommand{\cI}{\mathcal{I}}
\newcommand{\cK}{\mathcal{K}}
\newcommand{\cM}{\mathcal{M}}
\newcommand{\cP}{\mathcal{P}}
\newcommand{\cR}{\mathcal{R}}
\newcommand{\cW}{\mathcal{W}}
\newcommand{\cX}{\mathcal{X}}
\newcommand{\fD}{\mathfrak{D}}
\newcommand{\sM}{\mathscr{M}}
\newcommand{\sP}{\mathscr{P}}
\newcommand{\hC}{\widehat{C}}
\newcommand{\hU}{\widehat{U}}
\newcommand{\hV}{\widehat{V}}
\newcommand{\hW}{\widehat{W}}
\newcommand{\tB}{\widetilde{B}}
\newcommand{\tU}{\widetilde{U}}
\newcommand{\tW}{\widetilde{W}}
\newcommand{\tY}{\widetilde{Y}}
\newcommand{\talpha}{\widetilde{\alpha}}
\newcommand{\tzeta}{\widetilde{\zeta}}
\newcommand{\teta}{\widetilde{\teta}}
\newcommand{\tpsi}{\widetilde{\psi}}
\renewcommand{\=}{ : = }
\DeclareMathOperator{\diam}{diam}
\DeclareMathOperator{\dist}{dist}
\newcommand{\cl}[1]{{\rm cl}({#1})}
\DeclareMathOperator{\crit}{crit}
\newcommand{\wtp}{\widetilde{p}}
\newcommand{\whc}{\widehat{c}}
\newcommand{\chicrit}{\chi_{\crit}(c)}
\begin{document}

\title[High-order phase transitions in the quadratic family]{High-order phase transitions in the \\ quadratic family}
\author{Daniel Coronel}
\address{Daniel Coronel, Departamento de Matem{\'a}ticas, Universidad Andr{\'e}s Bello, Avenida Rep{\'u}blica~220, segundo piso, Santiago, Chile}
\email{alvaro.coronel@unab.cl}
\author{Juan Rivera-Letelier}
\address{Juan Rivera-Letelier, Facultad de Matem{\'a}ticas, Pontifica Universidad Cat{\'o}lica de Chile, Avenida Vicu{\~n}a Mackenna~4860, Santiago, Chile}
\email{riveraletelier@mat.puc.cl}

\begin{abstract}
We give the first example of a transitive quadratic map whose real and complex geometric pressure functions have a high-order phase transition.
In fact, near the phase transition these functions behave as
$$ x \mapsto \exp \left( - x^{-2} \right) $$
near~$x = 0$, before becoming linear.
This quadratic map has a non-recurrent critical point, so it is non-uniformly hyperbolic in a strong sense.
\end{abstract}

\maketitle

%
%

\section{Introduction}
This paper is concerned with the thermodynamic formalism of smooth dynamical systems.
Such a study was initiated by Sinai, Ruelle, and Bowen \cite{Sin72,Bow75,Rue76} in the context of uniformly hyperbolic diffeomorphisms and H{\"o}lder continuous potentials.
In the last decades there has been important efforts to extend the theory beyond the uniformly hyperbolic setting, specially in real and complex dimension~$1$ where a complete picture is emerging, see for example~\cite{BruTod09, MakSmi00, MakSmi03, PesSen08, PrzRiv11, PrzRivinterval} and references therein.
See also~\cite{Sar11, UrbZdu0901, VarVia10} and references therein for (recent) results in higher dimensions.

For a smooth map~$f$ in real or complex dimension~$1$ and a real parameter~$t$, we consider the pressure of~$f$ with respect to the geometric potential~$- t \log |Df|$, see~\S\ref{ss:statements} for precisions.
The function of~$t$ so defined is the \emph{geometric pressure function of~$f$}.
It is closely related to several multifractal spectra and large deviation rate functions associated with~$f$, see for example~\cite[Lemma~$2$]{BinMakSmi03}, \cite{GelPrzRam10}, \cite{IomTod11}, \cite[Theorems~$1.2$ and~$1.3$]{KelNow92}, \cite[Appendix~B]{PrzRiv11}, and references therein.

We exhibit a transitive quadratic map whose geometric pressure function behaves, for some constants~$A > 0$ and~$\chi > 0$ and for~$t$ near a certain parameter~$t_*$, as the function
$$ t \mapsto
\begin{cases}
- t \chi + \exp( - A (t_* - t)^{-2}) & \text{if } t < t_*; \\
- t \chi & \text{if } t \ge t_*,
\end{cases}
$$
see the Main Theorem in~\S\ref{ss:statements}.
In particular, the geometric pressure function of this map is not real analytic at~$t = t_*$; that is, it has a \emph{phase transition} at~$t = t_*$ in the sense of statistical mechanics.
This is the first example of a transitive smooth dynamical system having a phase transition of infinite contact order.
This example is also robust: Every family of sufficiently regular unimodal maps that is close to the quadratic family has a member with the same property.

The quadratic map we study has a non-recurrent critical point, so it is non-uniformly hyperbolic in a strong sense.
Thus, roughly speaking, lack of expansion is not responsible for the phase transition.
Instead, it is the irregular behavior of the critical orbit that is one of the mechanisms behind the phase transition.
Considering a different behavior of the critical orbit, in the companion paper~\cite{CorRiva} we gave the first example of a quadratic map having a phase transition at a large value of~$t$; that is, of a ``low-temperature phase transition'', see also~\cite[\S$5$]{MakSmi03} for some conformal Cantor sets with similar properties.
In contrast with the example studied here, the geometric pressure function of the quadratic map studied in~\cite{CorRiva} is not differentiable at the phase transition; that is, it is a phase transition of \emph{first order}.

Another interesting feature of the quadratic map we study is that it has no equilibrium state at the phase transition.
At a low-temperature phase transition there can be at most~$1$ equilibrium state,\footnote{See~\cite[Theorem~$6$]{Dob1304} in the real setting, and~\cite[Theorem~$8$]{Dob12} in the complex setting.} and in the companion paper~\cite{CorRiva} we provide an example of a quadratic map having one.

There are various examples in the literature of transitive smooth maps whose geometric pressure function has a first-order phase transition.
This includes quadratic maps that have an absolutely continuous invariant measure, and that do not satisfy the Collet-Eckmann condition.\footnote{For such a map, the geometric pressure function is identically zero after its first zero, see~\cite[Theorem~A]{NowSan98} or~\cite[Corollary~$1.3$]{Riv1204} in the real case and \cite[Main Theorem]{PrzRivSmi03} in the complex case.
On the other hand, since every absolutely continuous invariant measure has a strictly positive Lyapunov exponent, the existence of such a measure easily implies that the geometric pressure function is not differentiable at its first zero.}
By the work of Makarov and Smirnov~\cite{MakSmi00}, this also includes those phase transitions in the complex case that occur at a parameter in~$(- \infty, 0)$.
See also~\cite{DiaGelRam11, DiaGelRam1303, LepOliRio11} for examples of first-order phase transitions of some transitive $3$-dimensional diffeomorphisms.

Another type of phase transition that has been studied in detail, is that related to the existence of a neutral periodic point.
For a given~$\alpha \ge 1$, Lopes shows in~\cite{Lop93} that for~$t < 1$ close to~$1$ the geometric pressure function of the map~$f_{\alpha}$ given by~$x \mapsto x(1 + x^\alpha) \mod 1$, is of the order of~$(1 - t)^{\alpha}$; on the other hand, this function is identically zero on~$[1, + \infty)$.
In view of this result, it is expected that for a quadratic map~$f$ having a periodic point~$p$ of period~$n \ge 1$ satisfying~$Df^n(p) = \pm 1$, the geometric pressure function of~$f$ has a unique phase transition, and that this phase transition is of finite order.\footnote{Notice that, when~$Df^n(p) = 1$ (resp.~$Df^n(p) = -1$), the function~$x \mapsto f^n(x) - x$ (resp. $x \mapsto f^{2n}(x) - x$) is of the order of~$(x - p)^2$ (resp. $(x - p)^3$) near~$p$, see for example~\cite{CarGam93,Mil06}.}

\subsection{Statement of results}
\label{ss:statements}
We consider a set of real parameters~$c$ close to~$-2$, such that~$f_c(c) > c$, such that the interval~$I_c \= [c, f_c(c)]$ of~$\R$ is invariant by~$f_c$, and such that~$f_c$ is topologically exact on this set.
We consider~$2$ dynamical systems associated to~$f_c$: The interval map~$f_c|_{I_c}$, and the complex quadratic polynomial~$f_c$ acting on its Julia set~$J_c$.

For such~$c$, define
$$ \chicrit \= \liminf_{m \to + \infty} \frac{1}{m} \log |Df_c^m(c)|, $$
and denote by~$\sM_c^{\R}$ the space of Borel probability measures supported on~$I_c$ that are invariant by~$f_c$.
For a measure~$\mu$ in~$\sM_c^{\R}$ denote by~$h_\mu(f_c)$ the measure-theoretic entropy of~$f_c$ with respect to~$\mu$ and for each~$t$ in~$\R$ put
$$ P_c^{\R}(t)
\=
\sup \left\{ h_\mu(f_c) - t \int \log |Df_c| d\mu \mid \mu \in \sM_c^{\R} \right\}, $$
which is finite.
The function~$P_c^{\R} : \R \to \R$ so defined is called the \emph{geometric pressure function of~$f_c|_{I_c}$}; it is convex and nonincreasing.

Similarly, denote by~$\sM_c^{\C}$ the space of Borel probability measures supported on~$J_c$ that are invariant by~$f_c$ and for a measure~$\mu$ in~$\sM_c^{\C}$ we denote by~$h_\mu(f_c)$ the measure-theoretic entropy of~$f_c$ with respect to~$\mu$.
Then the \emph{geometric pressure function $P_c^{\C} : \R \to \R$
of~$f_c$} is defined by
$$ P_c^{\C}(t)
\=
\sup \left\{ h_\mu(f_c) - t \int \log |Df_c| d\mu \mid \mu \in \sM_c^{\C}
\right\}. $$

\begin{generic}[Main Theorem]
There is a real parameter~$c$ such that the critical point of~$f_c$ is non-recurrent, such that for some~$t_* > 0$ and every~$t \ge t_*$, we have
$$
P_c^\R(t) =  P_c^{\C}(t) = - t \frac{\chicrit}{2},
$$
and such that for some constants~$A > 0$, $B^+ > 0$, and $B^- > 0$, we have for every~$t$ in~$(0, t_*)$ close to~$t_*$
\begin{multline*}
- t \frac{\chicrit}{2} +  2^{-\left(\frac{A}{t_* - t} + B^-\right)^2}
\le
P_c^\R(t)
\le
P_c^{\C}(t)
\\ \le 
- t \frac{\chicrit}{2} + 2^{-\left(\frac{A}{t_* - t} - B^+ \right)^2}.
\end{multline*}
In particular, both~$P_c^{\R}$ and~$P_c^{\C}$ are of class~$C^2$ at~$t = t_*$, but neither of these functions is real analytic at~$t = t_*$.
\end{generic}

We show in addition that there is no equilibrium state at the phase transition, that there is a unique associated conformal measure, and that this last measure is dissipative and purely atomic, see~\S\ref{ss:main technical theorem} for definitions and for a strengthened version of the Main Theorem.
It can also be shown that, if for each~$t$ in~$(0, t_*)$ we denote by~$\nu_t$ the unique equilibrium state of~$f_c$ for the potential~$- t \log |Df_c|$, then the measure~$\nu_t$ converges as~$t \mapsto t_*^-$ to the invariant probability measure supported on a certain periodic point of period~$3$ of~$f_c$.

Since the critical point of a map~$f_c$ as in the Main Theorem is non-recurrent, it follows that~$f_c$ satisfies the Collet-Eckmann condition: $\chicrit > 0$, see~\cite{Mis81} for the real case and~\cite{Man93} for the complex case.
So, $t_*$ in the Main Theorem is strictly larger than the first zero of the geometric pressure function of~$f_c$; that is, $f_c$ has a ``low-temperature'' phase transition at~$t = t_*$ in the sense of~\cite{CorRiva}.

\subsection{Notes and references}
\label{ss:notes and references}
For complex rational maps, Makarov and Smirnov showed that every phase transition occurring at a negative parameter is removable, in the sense that the geometric pressure function has a real analytic continuation to all of~$(- \infty, 0)$, see~\cite[Theorem~B]{MakSmi00}.
In contrast, the geometric pressure function of a map as in the Main Theorem cannot admit a real analytic continuation beyond the phase transition.

For a map as in the Main Theorem, the non-existence of equilibrium states also follows from~\cite[Corollary~$1.3$]{InoRiv12}.

For a quadratic map having a phase transition at the first zero of the pressure function, that is, a \emph{high-temperature phase transition}, the number of ergodic equilibrium states can be arbitrary, see~\cite[Corollaries~$2$ and~$3$]{CorRiv10b}, and also~\cite[Example~$5.4$]{BruKel98} and~\cite[Corollary~$2$]{BruTod06} for an example having no equilibrium state.

Bruin and Todd study in~\cite{BruTod1202} certain piecewise linear models (with an infinite number of break points) of the smooth unimodal maps having a wild attractor in~\cite{BruKelNowvSt96}.
They show that for a large value of the order of the critical point, the piecewise linear model has a high-order phase transition.
Notice that no quadratic map can have a wild attractor, see~\cite{Lyu94b}.
\subsection{Strategy and organization}
\label{ss:organization}
To prove the Main Theorem, we consider the set of parameters introduced in~\cite{CorRiva}.
For each parameter~$c$ in this set, the critical value is eventually mapped to an expanding Cantor set, denoted by~$\Lambda_c$.
For such a parameter, the behavior of the geometric pressure function at low temperatures is intimately related to the derivatives of the map along the critical orbit (Proposition~\ref{p:improved MS criterion}).
As a first approximation we use the multipliers of the~$2$ periodic orbits of period~$3$ of~$f_c$ to estimate these derivatives.
However, the distortion constants in these estimates are too big to achieve the level of precision needed to prove the Main Theorem.
To achieve a higher precision, we estimate these distortion constants in terms of the total distortion along certain homoclinic orbits connecting the~$2$ periodic orbits of period~$3$ (Proposition~\ref{p:Improved distortion estimate} in~\S\ref{ss:Improved distortion estimate}).

We now proceed to describe the organization of the paper more precisely.

After some preliminaries in~\S\ref{s:preliminaries}, we state an strengthened
version of the Main Theorem in~\S\ref{ss:main technical theorem}, as the ``Main Technical Theorem''.
In~\S\ref{ss:2 variables series} we introduce an abstract~$2$ variables series that captures the behavior of the geometric pressure function at low temperatures (Proposition~\ref{p:2 variables series}).
Its definition is based on an approximation of the derivatives at the critical value in terms of its itinerary in~$\Lambda_c$ (Proposition~\ref{p:Improved distortion estimate} in~\S\ref{ss:Improved distortion estimate}), as mentioned above.

In~\S\ref{s:reduction}, which is independent of the rest of the paper, we study in an abstract setting the~$2$ variables series for an specific class of itineraries. 
We show that this series has a phase transition with an asymptotic behavior as in the Main Theorem.
The itineraries are defined in~\S\ref{ss:the itinerary}, and the estimates of the corresponding~$2$ variables series are made in~\S\ref{ss:estimating 2 variables series}.

The proof of the Main Technical Theorem is given in~\S\ref{s:finale}.
After some general results about conformal measures in~\S\ref{ss:conformal measures}, we make some technical estimates in~\S\ref{ss:transition parameter}.
The proof of the Main Technical Theorem is in~\S\ref{ss:proof of Main Technical
Theorem}, 
after recalling a few results from~\cite{CorRiva}.

\subsection{Acknowledgments}
The first named author acknowledges partial support from FONDECYT grant 11121453.
This article was completed while second named author was visiting Brown University and the Institute for Computational and Experimental Research in Mathematics (ICERM).
He thanks both of these institutions for the optimal working conditions provided, and acknowledges partial support from FONDECYT grant 1100922.

\section{Preliminaries}
\label{s:preliminaries}
We use~$\N$ to denote the set of integers that are greater than or equal to~$1$ and~$\N_0 \= \N \cup \{ 0 \}$.

\subsection{Quadratic polynomials, Green functions, and B{\"o}ttcher coordinates}
\label{ss:quadratic polynomials}
In this subsection and the next we recall some basic facts about the dynamics of
complex quadratic polynomials, see for instance~\cite{CarGam93} or~\cite{Mil06}
for references.

For~$c$ in~$\C$ we denote by~$f_c$ the complex quadratic polynomial
$$ f_c(z) = z^2 + c, $$
and by~$K_c$ the \emph{filled Julia set} of $f_c$; that is, the set of all
points~$z$ in~$\C$ whose forward orbit under~$f_c$ is bounded in~$\C$.
The set~$K_c$ is compact and its complement is the connected set consisting of
all points whose orbit converges to infinity in the Riemann sphere.
Furthermore, we have $f_c^{-1}(K_c) = K_c$ and~$f_c(K_c) = K_c$. 
The boundary~$J_c$ of~$K_c$ is the \emph{Julia set of~$f_c$}.

For a parameter~$c$ in~$\C$, the \emph{Green function of~$K_c$} is the function
$G_c:\C \to [0,+\infty)$ that is identically~$0$ on~$K_c$, and that
for~$z$ outside~$K_c$ is given by the limit,
\begin{equation}\label{def:Green function}
  G_c(z) = \lim_{n\rightarrow +\infty} \frac{1}{2^n} \log |f_c^n(z)| > 0.
\end{equation}
The function~$G_c$ is continuous, subharmonic, satisfies~$G_c \circ f_c = 2G_c$ on~$\C$, and it is harmonic and strictly positive outside~$K_c$.
On the other hand, the critical values of~$G_c$ are bounded from above by~$G_c(0)$, and
the open set
$$ U_c \= \{z\in \C \mid G_c(z) > G_c(0)\} $$
is homeomorphic to a punctured disk.
Notice that $G_c(c)=2G_c(0)$, thus~$U_c$ contains~$c$. 

By B{\"o}ttcher's Theorem there is a unique conformal representation
\[
 \varphi_c: U_c
\rightarrow
\{z\in \C \mid |z| > \exp (G_c(0)) \},
\]
and this map conjugates~$f_c$ to $z \mapsto z^2$.
It is called \emph{the B{\"o}ttcher coordinate of~$f_c$} and satisfies $G_c =
\log |\varphi_c|$.

\subsection{External rays and equipotentials}
\label{ss:rays and equipotentials}
Let~$c$ be in~$\C$.
For~$v > 0$ the \emph{equipotential~$v$ of~$f_c$} is by definition~$G_c^{-1}(v)$.
A \emph{Green's line of~$G_c$} is a smooth curve on the complement of~$K_c$ in~$\C$ that is orthogonal to the equipotentials of~$G_c$ and that is maximal with this property. 
Given~$t$ in~$\R / \Z$, the \emph{external ray of angle~$t$ of~$f_c$}, denoted by~$R_c(t)$, is the Green's line of~$G_c$ containing
$$ \{ \varphi_c^{-1}(r \exp(2 \pi i t)) \mid \exp(G_c(0))< r < +\infty \}. $$
By the identity~$G_c \circ f_c= 2G_c$, for each~$v > 0$ and each~$t$ in~$\R / \Z$ the map~$f_c$ maps the equipotential~$v$ to the equipotential~$2v$ and maps~$R_c(t)$ to~$R_c(2t)$.
For~$t$ in~$\R / \Z$ the external ray~$R_c(t)$ \emph{lands at a point~$z$}, if~$G_c : R_c(t) \to (0, + \infty)$ is a bijection and if~$G_c|_{R_c(t)}^{-1}(v)$ converges to~$z$ as~$v$ converges to~$0$ in~$(0, + \infty)$.
By the continuity of~$G_c$, every landing point is in $J_c = \partial K_c$.

The \emph{Mandelbrot set~$\cM$} is the subset of~$\C$ of those
parameters~$c$ for which~$K_c$ is connected.
The function
\[
 \begin{array}{cccl}
  \Phi : &\C \setminus \cM & \to & \C \setminus \cl{\D}\\
          &     c         &  \mapsto           & \Phi(c) \= \varphi_c(c)
 \end{array}
\]
is a conformal representation, see~\cite[VIII, \emph{Th{\'e}or{\`e}me}~$1$]{DouHub84}.
For~$v > 0$ the \emph{equipotential~$v$ of~$\cM$} is by definition
$$ \cE(v) \= \Phi^{-1}(\{z\in \C \mid |z| = v \}). $$ 
On the other hand, for~$t$ in~$\R / \Z$ the set
$$ \cR(t) \= \Phi^{-1}(\{r \exp(2 \pi i t) \mid r > 1 \}) $$
is called the \emph{external ray of angle~$t$ of~$\cM$}.
We say that $\cR(t)$ \emph{lands at a point~$z$} in~$\C$, if~$\Phi^{-1} (r \exp(2\pi i t))$ converges to~$z$ as $r \searrow 1$.
When this happens~$z$ belongs to~$\partial \cM$.

\subsection{The wake~$1/2$}
\label{ss:wake 1/2}
In this subsection we recall a few facts that can be found for example
in~\cite{DouHub84} or~\cite{Mil00c}.

The external rays~$\cR(1/3)$ and~$\cR(2/3)$ of~$\cM$ land at the parameter~$c = -3/4$, and these are the only external rays of~$\cM$ that land at this point, see
for example~\cite[Theorem~$1.2$]{Mil00c}.
In particular, the complement in~$\C$ of the set
$$ \cR(1/3) \cup \cR(2/3) \cup \{ - 3/4 \} $$
has~$2$ connected components; we denote by~$\cW$ the connected component
containing the point~$c = -2$ of~$\cM$.

For each parameter~$c$ in~$\cW$ the map~$f_c$ has~$2$ distinct fixed points; one
of the them is the landing point of the external ray~$R_c(0)$ and it is denoted
by~$\beta(c)$; the other one is denoted by~$\alpha(c)$.
The only external ray landing at~$\beta(c)$ is~$R_c(0)$, and
the only external ray landing at~$-\beta(c)$
is~$R_c(1/2)$.

Moreover, for every parameter~$c$ in~$\cW$ the only external rays 
of~$f_c$ landing at~$\alpha(c)$ are~$R_c(1/3)$
and~$R_c(2/3)$, see for example~\cite[Theorem~$1.2$]{Mil00c}.
The complement of~$R_c(1/3) \cup R_c(2/3) \cup \{ \alpha(c)
\}$ in~$\C$ has~$2$ connected components; one containing~$- \beta(c)$ and~$z =
c$, and the other one containing~$\beta(c)$ and~$z = 0$.
On the other hand, the point~$\alpha(c)$ has~$2$ preimages by~$f_c$: Itself and~$\talpha(c) \= - \alpha(c)$.
The only external rays landing at~$\talpha(c)$ are~$R_c(1/6)$
and~$R_c(5/6)$.

\subsection{Yoccoz puzzles and para-puzzle}
\label{ss:puzzles}
In this subsection we recall the definitions of Yoccoz puzzle and para-puzzle.
We follow \cite{Roe00}.

\begin{defi}[Yoccoz puzzles]
Fix~$c$ in~$\cW$ and consider the open region $X_c \= \{z\in \C \mid G_c(z) <
1\}$. 
The \emph{Yoccoz puzzle of~$f_c$} is given by the following sequence of
graphs~$(I_{c, n})_{n = 0}^{+ \infty}$ defined for~$n = 0$ by:
\[
 I_{c,0} \= \partial X_c \cup (X_c \cap \cl{R_c(1/3)} \cap \cl{R_c(2/3)}),
\]
and for~$n \ge 1$ by~$I_{c,n} \= f_c^{-n}(I_{c,0})$.
The \emph{puzzle pieces of depth~$n$} are the connected components of $f_c^{-n}(X_c) \setminus I_{c,n}$.
The puzzle piece of depth~$n$ containing a point~$z$ is denoted by~$P_{c,n}(z)$.
\end{defi}

Note that for a real parameter~$c$, every puzzle piece intersecting the real line is invariant under complex conjugation.
Since puzzle pieces are simply-connected, it follows that the intersection of such a puzzle piece with~$\R$ is an interval.

\begin{defi}[Yoccoz para-puzzles\footnote{In contrast with~\cite{Roe00}, we only consider para-puzzles contained in~$\cW$.}]
Given an integer~$n \ge 0$, put
$$ J_n
\=
\{t\in [1/3,2/3] \mid 2^n t ~ (\mathrm{mod}\, 1) \in \{1/3,2/3\} \}, $$
let~$\cX_n$ be the intersection of~$\cW$ with the open region in the parameter plane bounded by the equipotential~$\cE(2^{-n})$ of~$\cM$, and put
\[
 \cI_{n}
\=
\partial \cX_n \cup \left( \cX_n \cap \bigcup_{t\in J_n} \cl{\cR(t)} \right).
\]
Then the \emph{Yoccoz para-puzzle of~$\cW$} is the sequence of graphs~$(\cI_n)_{n = 0}^{+ \infty}$.
The \emph{para-puzzle pieces of depth~$n$} are the connected components of $\cX_n \setminus \cI_n$.
The para-puzzle piece of depth~$n$ containing a parameter~$c$ is denoted by~$\cP_n(c)$.  
\end{defi}
Observe that there is only~$1$ para-puzzle piece of depth~$0$, and only~$1$ para-puzzle piece of depth~$1$; they are bounded by the same external rays but different equipotentials.
Both of them contain~$c = - 2$.

Fix a parameter~$c$ in~$\cP_0(-2)$.
There are precisely~$2$ puzzle pieces of depth~$0$: $P_{c, 0}(\beta(c))$ and~$P_{c, 0}(-\beta(c))$.
Each of them is bounded by the equipotential~$1$ and by the closures of the
external rays landing at~$\alpha(c)$.
Furthermore, the critical value~$c$ of~$f_c$ is contained 
in~$P_{c, 0}(- \beta(c))$ and the critical point in~$P_{c, 0}(\beta(c))$.
It follows that the set~$f_c^{-1}(P_{c, 0}(\beta(c)))$ is the 
disjoint union of~$P_{c, 1}(- \beta(c))$ and~$P_{c, 1}(\beta(c))$, so~$f_c$ maps each of the sets~$P_{c, 1}(-\beta(c))$ and~$P_{c,
1}(\beta(c))$ biholomorphically to~$P_{c, 0}(\beta(c))$.
Moreover, there are precisely~$3$ puzzle pieces of depth~$1$: 
$$ P_{c, 1}(-\beta(c)),
P_{c, 1}(0)
\text{ and }
P_{c, 1}(\beta(c)); $$
$P_{c, 1}(- \beta(c))$ is bounded by the equipotential~$1/2$ and by the 
closures of the external rays that land at~$\alpha(c)$; $P_{c, 1}(\beta(c))$ is
bounded by the equipotential~$1/2$ and by the closures of the external rays that
land at~$\talpha(c)$; and~$P_{c, 1}(0)$ is bounded by the equipotential~$1/2$
and by the closures of the external rays that land at~$\alpha(c)$ and
at~$\talpha(c)$.
In particular, the closure of~$P_{c, 1}(\beta(c))$ is 
contained in~$P_{c, 0}(\beta(c))$.
It follows from this that for each integer~$n \ge 1$ the map~$f_c^n$ maps~$P_{c, n}(- \beta(c))$ biholomorphically to~$P_{c, 0}(\beta(c))$.

The following is used several times, see~\cite[Lemma~$3.3$]{CorRiva}.
\begin{lemm}
\label{l:auxiliary para-puzzle pieces}
For each integer~$n \ge 1$, the following properties hold.
\begin{enumerate}[1.]
\item 
The para-puzzle piece~$\cP_n(-2)$ contains the closure of~$\cP_{n + 1}(-2)$.
\item 
For each  parameter~$c$ in~$\cP_n(-2)$  the 
critical value~$c$ of~$f_c$ is in~$P_{c, n}(-\beta(c))$.
\end{enumerate}
\end{lemm}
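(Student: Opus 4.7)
The plan is to handle the two parts separately, both via the Douady--Hubbard correspondence between the parameter-space graph $\cI_n$ and its dynamical-plane counterpart $I_{c,n}$: a parameter $c$ lies on the external ray $\cR(t)$ of $\cM$ if and only if the critical value of $f_c$ lies on the dynamical ray $R_c(t)$, and $c\in\cE(v)$ iff the critical value sits on the dynamical equipotential of the corresponding level. Under this correspondence, $c\in\cX_n\setminus\cI_n$ is equivalent to the critical value of $f_c$ lying in some puzzle piece of depth $n$ of $f_c$.

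For Part~2, I would first check the base parameter $c=-2$: since $\beta(-2)=2$, the critical value of $f_{-2}$ equals $-\beta(-2)$ and hence lies in $P_{-2,n}(-\beta(-2))$ for every $n$. For general $c\in\cP_n(-2)$ the correspondence places the critical value in some depth-$n$ puzzle piece of $f_c$, and which one is determined by a combinatorial label that is locally constant on $\cP_n(-2)$, because the graph $I_{c,n}$ is carried by a holomorphic motion of $\alpha(c)$ and its iterated preimages up to order~$n$ together with the rays landing at them. Connectedness of $\cP_n(-2)$ then propagates the base-point label and gives $c\in P_{c,n}(-\beta(c))$ throughout.

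For Part~1, I would describe $\partial\cP_{n+1}(-2)$ and check that each portion stays strictly inside $\cP_n(-2)$. A short combinatorial calculation shows that for every $n\ge 1$ the two elements of $J_n$ closest to the tip angle $1/2$ are $t_\pm^{(n)}:=\tfrac{1}{2}\mp\tfrac{1}{3\cdot 2^n}$, and that these are exactly the angles of the rays bounding $\cP_n(-2)$; the analogous pair $t_\pm^{(n+1)}$ at depth $n+1$ then lies strictly between $t_-^{(n)}$ and $t_+^{(n)}$. Hence the two ray arcs bounding $\cP_{n+1}(-2)$ lie strictly between the rays bounding $\cP_n(-2)$, the equipotential arc on $\cE(2^{-(n+1)})$ lies strictly inside $\cX_n$, and the landing points of the new bounding rays are Misiurewicz parameters whose critical value sits on a ray landing at an order-$(n+1)$ preimage of $\alpha(c)$. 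By Part~2 at depth $n$ this preimage lies strictly inside $P_{c,n}(-\beta(c))$, so by the correspondence these landing parameters lie in the open set $\cP_n(-2)$. Combining these observations yields $\cl{\cP_{n+1}(-2)}\subset\cP_n(-2)$.

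The main obstacle is making the Douady--Hubbard correspondence precise at the level of open sets rather than individual rays or landing points, and verifying that no ray of $\cI_{n+1}$ other than $\cR(t_\pm^{(n+1)})$ enters the component of $\cX_{n+1}\setminus\cI_{n+1}$ that contains $-2$. This is classical in the Yoccoz para-puzzle theory (compare Roesch's treatment cited in the excerpt), but requires careful tracking of the ray structure under the holomorphic motion provided by the $\lambda$-lemma.
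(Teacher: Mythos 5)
The paper itself does not prove this lemma; it is cited from the companion paper (\cite[Lemma~3.3]{CorRiva}), so there is no in-paper proof to compare against. I can only assess your argument on its own.

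Your framework---the Douady--Hubbard parameter/dynamical correspondence together with the holomorphic motion of the puzzle graph~$I_{c,n}$ over a para-puzzle piece---is the standard one for this type of statement, and Part~2 is handled correctly: the combinatorial address of the puzzle piece containing the critical value is locally constant over~$\cX_n\setminus\cI_n$, hence constant on the connected set~$\cP_n(-2)$, and evaluating at~$c=-2$ (where the critical value equals~$-\beta(-2)$) identifies it as~$P_{c,n}(-\beta(c))$. Your computation of the bounding angles~$t_\pm^{(n)}=\tfrac12\mp\tfrac1{3\cdot 2^n}$ is also correct, as is the observation~$J_n\subset J_{n+1}$.

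The gap is in Part~1, in the sentence ``By Part~2 at depth~$n$ this preimage lies strictly inside~$P_{c,n}(-\beta(c))$, so by the correspondence these landing parameters lie in the open set~$\cP_n(-2)$.'' This is circular: Part~2 at depth~$n$ is a statement about parameters already known to lie in~$\cP_n(-2)$, which is precisely what you are trying to establish for the landing parameters of~$\cR(t_\pm^{(n+1)})$. Moreover, the correspondence you quote (parameter on a ray iff critical value on the corresponding dynamical ray, and likewise for equipotentials) is a statement about the open pieces, not about landing points, so you cannot invoke it directly at a landing parameter. What is actually needed is to verify directly that no ray~$\cR(t)$ with~$t\in J_n$ has the same landing point as~$\cR(t_\pm^{(n+1)})$; then, since the bounding angles and equipotential levels of~$\cP_{n+1}(-2)$ are strictly nested inside those of~$\cP_n(-2)$, one gets~$\cl{\cP_{n+1}(-2)}\cap\cI_n=\emptyset$, and because~$\cl{\cP_{n+1}(-2)}$ is a connected subset of~$\cX_n$ containing~$-2$, it must lie in~$\cP_n(-2)$. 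The disjointness of the landing points can be argued from preperiods (an angle~$t$ with~$2^{n+1}t\equiv 1/3$ or~$2/3\ (\mathrm{mod}\ 1)$ but~$2^{n}t\not\equiv 1/3,2/3$ has preperiod~$n+1$ under doubling, whereas angles in~$J_n$ have preperiod~$\le n$, so the Misiurewicz landing parameters are distinct), but as written this step is missing.
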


\subsection{The uniformly expanding Cantor set}
\label{ss:expanding Cantor set}
For a parameter~$c$ in~$\cP_3(-2)$, the maximal invariant set~$\Lambda_c$ of~$f_c^3$ in~$P_{c, 1}(0)$ plays an important role in the proof of the Main Theorem.
After recalling some of the properties of~$\Lambda_c$ shown 
in~\cite[\S$3.3$]{CorRiva}, in this subsection we prove that~$f_c^3$ is
uniformly expanding on~$\Lambda_c$ and make some distortion estimates
for~$f_c^3$ on~$\Lambda_c$ (Lemma~\ref{l:contractions}).

Fix~$c$ in~$\cP_3(-2)$.
There are precisely~$2$ connected components of~$f_c^{-3}(P_{c, 1}(0))$ contained in~$P_{c, 1}(0)$ that we denote by~$Y_c$ and~$\tY_c$.
The closures of these sets are disjoint and contained in~$P_{c, 1}(0)$.
The sets~$Y_c$ and~$\tY_c$ are distinguished by the fact that~$Y_c$ contains in its boundary the common landing point of the external rays~$R_c(7/24)$ and~$R_c(17/24)$, denoted~$\omega(c)$, and that~$\tY_c$ contains in its boundary the common landing point of the external rays~$R_c(5/24)$ and~$R_c(19/24)$.
The map~$f_c^3$ maps each of the sets~$Y_c$ and~$\tY_c$ biholomorphically to~$P_{c, 1}(0)$.
Thus, if we put
\[
\begin{array}{cccl}
  g_c & : Y_c \cup \tY_c & \to & P_{c,1}(0)\\
      &    z & \mapsto & g_c(z) \= f_c^{3}(z),
 \end{array}
\]
then
$$ \Lambda_c = \bigcap_{n\in \N} g_c^{-n}(\cl{P_{c,1}(0)}). $$
The rest of this section is dedicated to prove the following lemma.
\begin{lemm}\label{l:contractions}
There are constants $C_0 > 0$ and $\upsilon_0 > 0$ such that for every 
parameter~$c$ in~$\cP_5(-2)$, every~$\ell$ in~$\N$, and every connected
component~$W$ of $g_c^{-\ell}(P_{c,1}(0))$, we have
$$
\diam(W) \le C_0\exp(-\upsilon_0\ell);
$$
furthermore, for all~$z$ and~$w$ in~$W$ we have
\[
 \left| \frac{Dg_c(z)}{Dg_c(w)} - 1 \right| \le C_0 \exp(-\upsilon_0\ell)
\text{ and }
\log \left| \frac{Dg_c(z)}{Dg_c(w)} \right| \le C_0 \exp(-\upsilon_0\ell).
\]
\end{lemm}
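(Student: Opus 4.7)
The plan is to view $g_c$ as a conformal iterated function system with two branches on $P_{c,1}(0)$, and to deduce the estimates from a Schwarz--Pick contraction argument in the hyperbolic metric. Recall from the setup that $g_c$ maps each of $Y_c$ and $\tY_c$ biholomorphically onto $P_{c,1}(0)$, and that the closures $\overline{Y_c}, \overline{\tY_c}$ are disjoint and compactly contained in $P_{c,1}(0)$. Consequently the two inverse branches are univalent self-maps of $P_{c,1}(0)$ whose images lie inside the relatively compact subset $\overline{Y_c} \cup \overline{\tY_c}$; by the Schwarz--Pick lemma each contracts the hyperbolic metric of $P_{c,1}(0)$ by some factor $\lambda(c) < 1$.

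The first key task is to make $\lambda(c)$ uniform in $c \in \cP_5(-2)$. By Lemma~\ref{l:auxiliary para-puzzle pieces} one has $\overline{\cP_5(-2)} \subset \cP_4(-2) \subset \cP_3(-2)$, so the parameter ranges over a set whose closure is compactly contained in the region where the construction is valid. Since the external rays and equipotentials defining $P_{c,1}(0), Y_c, \tY_c$ depend continuously on $c$, a compactness argument yields a uniform contraction factor $\lambda_0 \= \sup_{c \in \overline{\cP_5(-2)}} \lambda(c) < 1$, together with a uniform comparison between the hyperbolic metric of $P_{c,1}(0)$ and the Euclidean metric on the compactly contained set $\overline{Y_c} \cup \overline{\tY_c}$. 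Any component $W$ of $g_c^{-\ell}(P_{c,1}(0))$ is the image of $P_{c,1}(0)$ under an $\ell$-fold composition of inverse branches, so $W \subset \overline{Y_c} \cup \overline{\tY_c}$ and its hyperbolic diameter is bounded by $\lambda_0^{\ell - 1}$ times the (finite) hyperbolic diameter of $\overline{Y_c} \cup \overline{\tY_c}$ in $P_{c,1}(0)$. Converting back to Euclidean distance then gives $\diam(W) \le C_0 \exp(-\upsilon_0 \ell)$ for $\upsilon_0 \= -\log \lambda_0 > 0$.

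The distortion bounds will follow quickly from the diameter estimate. Since $g_c = f_c^3$ is a polynomial whose derivative is nonvanishing on $Y_c \cup \tY_c$ (as $g_c$ is biholomorphic onto $P_{c,1}(0)$), the function $\log |Dg_c|$ is real-analytic on an open neighborhood of $\overline{Y_c} \cup \overline{\tY_c}$, with Lipschitz constant that is uniform over $c \in \overline{\cP_5(-2)}$ by continuity and compactness. Applied to any pair $z, w \in W$, this yields
$$ \left| \log \left| \frac{Dg_c(z)}{Dg_c(w)} \right| \right| \le C' \diam(W) \le C_0 \exp(-\upsilon_0 \ell), $$
after possibly enlarging $C_0$, giving the second bound; the first bound $|Dg_c(z)/Dg_c(w) - 1| \le C_0 \exp(-\upsilon_0 \ell)$ then follows from $|e^x - 1| \le 2|x|$ for $|x|$ small, again absorbing constants.

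The one genuine subtlety is the uniformity of all constants in $c$, which rests on the continuous dependence of the external rays $R_c(1/3), R_c(2/3), R_c(5/24), R_c(7/24), R_c(17/24), R_c(19/24)$ and of the relevant equipotentials on $c$, together with the compactness $\overline{\cP_5(-2)} \subset \cP_4(-2)$. Given the material already developed in \S\ref{ss:puzzles} and \S\ref{ss:expanding Cantor set}, this should present no serious difficulty.
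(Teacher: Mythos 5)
Your approach is essentially the same as the paper's: a Schwarz--Pick contraction in a hyperbolic metric gives exponential decay of diameters, compactness of $\overline{\cP_5(-2)}$ (via part~$1$ of Lemma~\ref{l:auxiliary para-puzzle pieces}) gives uniformity in $c$, and a Lipschitz estimate on the derivative upgrades this to a distortion bound. The one genuine difference is that you apply Schwarz--Pick directly to the two inverse branches as self-maps of $P_{c,1}(0)$, whereas the paper first extends them to the larger domain $\hW_c$ (Lemma~\ref{l:contractions extension}) and works with the Poincar\'e metric there. For the present lemma your shortcut is legitimate and avoids the extension step; the paper's choice of $\hW_c$ is mainly a matter of consistency with Lemma~\ref{l:distortion to central 0} and the rest of the inducing machinery, where the extra collar is actually needed.

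There is, however, a genuine gap in the last step of your distortion argument. You obtain a Lipschitz bound for the \emph{real-valued} function $\log|Dg_c|$, hence a bound on $\bigl|\log|Dg_c(z)/Dg_c(w)|\bigr|$, and then try to deduce $\left| \tfrac{Dg_c(z)}{Dg_c(w)} - 1 \right| \le C_0 e^{-\upsilon_0\ell}$ from $|e^x-1|\le 2|x|$. This does not follow: the ratio $Dg_c(z)/Dg_c(w)$ is a complex number, $\log|Dg_c(z)/Dg_c(w)|$ is only the real part of its logarithm, and the inequality you invoke controls $\bigl||Dg_c(z)/Dg_c(w)|-1\bigr|$ but says nothing about the argument of the ratio (e.g.\ if the ratio is $e^{i\theta}$ the left-hand side you want is of order $|\theta|$ while your bound is $0$). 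The correct order of deduction is the reverse: on a fixed neighborhood of $\overline{Y_c}\cup\overline{\tY_c}$ the holomorphic function $Dg_c = Df_c^3$ is Lipschitz and $|Dg_c|$ is bounded below, both uniformly in $c\in\overline{\cP_5(-2)}$ by compactness; this gives
\[
\left| \frac{Dg_c(z)}{Dg_c(w)} - 1 \right| = \frac{|Dg_c(z)-Dg_c(w)|}{|Dg_c(w)|} \le C'\,|z-w| \le C_0 e^{-\upsilon_0\ell},
\]
and then $\log\left|\tfrac{Dg_c(z)}{Dg_c(w)}\right| \le \left|\tfrac{Dg_c(z)}{Dg_c(w)}-1\right|$ follows from $\log|1+\zeta| \le |\zeta|$. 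This is exactly how the paper proceeds, via the maximum principle applied to $(Df_c^3(z)-Df_c^3(w))/(z-w)$ and a uniform lower bound on $|Dg_c|$. The fix is minor, but as written your argument proves the wrong inequality.
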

To prove this lemma, we recall some facts from~\cite[\S$4.1$]{CorRiva}.
For a parameter~$c$ in~$\cP_2(-2)$, the open disk~$\hU_c$ containing~$-\beta(c)$ that is bounded by the
equipotential~$2$ and by
\begin{equation*}
\label{eq:pleasant cut}
R_c(7/24) \cup \{ \omega(c) \} \cup R_c(17/24),  
\end{equation*}
contains the closure of~$P_{c,0}(- \beta(c))$ and is disjoint from~$P_{c, 1}(\beta(c))$;
the set $\hW_c \= f_c^{-1}(\hU_c)$ contains the closure of~$P_{c, 1}(0)$
and depends continuously with~$c$ on $\cP_{3}(-2)$.
\begin{lemm}
\label{l:contractions extension}
For every parameter~$c$ in~$\cP_4(-2)$, each of the maps
$$ \psi_c \= (g_c|_{Y_c})^{-1}
\text{ and }
\tpsi_c \= (g_c|_{\tY_c})^{-1} $$
extends biholomorphically to~$\hW_c$.
Moreover, the closures of~$\psi_c(\hW_c)$ and~$\tpsi_c(\hW_c)$ are both included in~$P_{c,1}(0)$.
\end{lemm}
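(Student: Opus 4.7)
The plan is to decompose $g_c = f_c \circ f_c \circ f_c$ and extend $\psi_c$ as a composition of three inverse branches of $f_c$. First, I would verify that $\hW_c$ is a topological disk. Since $\hU_c$ is a simply connected domain containing the critical value~$c$ of~$f_c$ (because $\hU_c \supset \overline{P_{c,0}(-\beta(c))} \ni c$), and $0$ is the unique critical point of $f_c$, Riemann--Hurwitz applied to the proper branched cover $f_c \colon \hW_c \to \hU_c$ of degree~$2$ with a single simple ramification gives
\[
\chi(\hW_c) = 2\chi(\hU_c) - 1 = 1,
\]
so $\hW_c$ is simply connected. It is therefore a natural domain on which to seek a single-valued extension.

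Next, set $A_1 = Y_c$, $A_2 = f_c(Y_c)$, $A_3 = f_c^2(Y_c)$, so that $f_c|_{A_i}$ is biholomorphic for $i=1,2$ with $f_c(A_3) = P_{c,1}(0)$, and
\[
\psi_c = (f_c|_{A_1})^{-1} \circ (f_c|_{A_2})^{-1} \circ (f_c|_{A_3})^{-1}.
\]
I would extend these inverses successively: first $(f_c|_{A_3})^{-1}$ from $P_{c,1}(0)$ to a biholomorphism $\hW_c \to \hat{A}_3 \supset A_3$; then $(f_c|_{A_2})^{-1}$ from $\hat{A}_3$ to a biholomorphism onto $\hat{A}_2 \supset A_2$; and finally $(f_c|_{A_1})^{-1}$ on $\hat{A}_2$ to a biholomorphism onto a region containing $Y_c$. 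At each stage, the target is a topological disk (the image of $\hW_c$ under the partially extended biholomorphism), and the extension is possible provided this target avoids the critical value~$c$ of~$f_c$ along the chosen branch; the definition of $\hU_c$ via the rays $R_c(7/24), R_c(17/24)$ landing at~$\omega(c)$ and the equipotential~$2$ is precisely what guarantees this. The extension of $\tpsi_c$ is analogous, using the symmetric landing point defining $\tY_c$.

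For the closure containment $\overline{\psi_c(\hW_c)} \cup \overline{\tpsi_c(\hW_c)} \subset P_{c,1}(0)$, the hypothesis $c \in \cP_4(-2)$ provides one extra depth beyond that needed for $\hW_c$ to be well-defined. The outer boundary of $\hW_c$ lies on the equipotential~$1$, which pulls back under $g_c$ (using $G_c \circ g_c = 8 G_c$) to the equipotential $1/8 < 1/2$, safely interior to $P_{c,1}(0)$; an analogous verification, combined with continuity of puzzle pieces and the strict nesting from Lemma~\ref{l:auxiliary para-puzzle pieces}, handles the external-ray portions of $\partial \hW_c$ and gives compact containment. The main obstacle is the branch-tracking in the middle step: at each of the three pullbacks one must identify which of the two preimages of $f_c$ to take and verify critical value avoidance, which requires explicit combinatorial tracking of how the defining rays and equipotential of $\hU_c$ transform under the iterated preimages along the branch corresponding to~$Y_c$ (respectively~$\tY_c$).
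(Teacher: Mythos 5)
Your high-level plan matches the paper's: decompose the extension through the factorization $g_c = f_c^3$ and reduce to checking that the critical value $c$ is avoided along the chosen branch at each of the three pullbacks. But you explicitly leave the verification of that avoidance as ``the main obstacle,'' and that step is precisely the content of the first half of the proof. The paper closes it concretely: since $c \in \cP_4(-2)$, part~$2$ of Lemma~\ref{l:auxiliary para-puzzle pieces} places the critical value in $P_{c,4}(-\beta(c))$, so $f_c^i(c) \in P_{c,4-i}(\beta(c)) \subset P_{c,1}(\beta(c))$ for $i \in \{1,2,3\}$, and $P_{c,1}(\beta(c))$ is disjoint from $\hU_c$ by construction; hence $f_c^j(c) \notin \hW_c$ for $j \in \{0,1,2\}$, which is exactly the hypothesis the extension needs. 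Asserting that ``the definition of $\hU_c$ \dots is precisely what guarantees this'' is not a substitute for this chain of containments.

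For the second assertion, your equipotential observation ($G_c \circ g_c = 8G_c$, so the outer boundary of $\psi_c(\hW_c)$ lies on potential $1/8 < 1/2$) is fine, but it only controls the equipotential portion of $\partial\bigl(\psi_c(\hW_c)\bigr)$. The difficulty is entirely in the external-ray portion of $\partial\hW_c$, which lands on the Julia set, and ``continuity of puzzle pieces and the strict nesting from Lemma~\ref{l:auxiliary para-puzzle pieces}'' does not apply here: $\hW_c$ is not a puzzle piece and its preimages under $g_c$ are not controlled by that lemma. The paper handles this by a different device: it encloses $\hU_c$ in a larger disk $\tU_c$ bounded by the equipotential $2$ and the rays landing at $\talpha(c)$ (so that $\tU_c$ behaves combinatorially like a puzzle-type region), shows $f_c^{-1}(\tU_c)$ is disjoint from $P_{c,4}(-\beta(c))$ (again using $c \in \cP_4(-2)$), and then tracks the connected components of $f_c^{-2}(\tU_c)$ and $f_c^{-4}(\tU_c)$ explicitly to conclude the closure containment. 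Without a comparable enclosing domain, your argument has no mechanism to keep the ray portions of $\psi_c(\hW_c)$ inside $P_{c,1}(0)$, so this step is a genuine gap.
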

\proof
Fix a parameter~$c$ in~$\cP_4(-2)$.

To prove the first assertion, it is sufficient to show that for~$j$ in~$\{0, 1, 2 \}$ the critical value $c$ is not in $f_c^{-j}(\hW_c)$.
By part~$2$ of Lemma~\ref{l:auxiliary para-puzzle pieces}, the critical value~$c$ is in~$P_{c,4}(-\beta(c))$. 
Then for~$i$ in~$\{1, 2, 3 \}$ the point $f_c^i(c)$ is in the set $P_{c,1}(\beta(c))$ that is disjoint from~$\hU_c$.
Using $\hW_c = f_c^{-1}(\hU_c)$, we conclude the proof of the extension.

To prove the second assertion, we use the fact that~$f_c(Y_c) = f_c(\tY_c)$ and
that~$f_c^2(Y_c)$ is contained in~$P_{c, 1}(\beta(c))$ (\emph{cf.}, 
proof of~\cite[Lemma~$3.5$]{CorRiva}).
Denote by~$\tU_c$ the open disk  containing 0 that is bounded by the
equipotential 2, the point~$\talpha(c)$ and the external rays landing at
$\talpha(c)$.
Observe that $\hU_c\subset \tU_c$ and thus, that~$\hW_c$ is contained in the connected set~$f_c^{-1}(\tU_c)$.
The set~$f_c^{-1}(\tU_c)$ is contained in the set containing~$\beta(c)$ and that is bounded by the equipotential~$1$, by the preimage~$\alpha_1(c)$ of~$\talpha(c)$ contained in~$P_{c, 1}(- \beta(c))$, and by the external rays~$R_c(5/12)$ and~$R_c(7/12)$ that land at~$\alpha_1(c)$.
In particular, $f_c^{-1}(\tU_c)$ is disjoint from~$P_{c, 4}(- \beta(c))$.
This implies that~$f_c^{-2}(\tU_c)$ has~$2$ connected components, one that is disjoint from~$P_{c, 1}(\beta(c))$ and the other one that contains~$f_c^2(Y_c)$; the closure of the latter is contained in~$P_{c, 0}(\beta(c))$.
Since~$f_c^2(P_{c, 1}(0))$ contains~$P_{c, 0}(\beta(c))$, we conclude that the closures of the connected components of~$f_c^{-4}(\tU_c)$ containing~$Y_c$ and~$\tY_c$ are both contained in~$P_{c, 1}(0)$.
This proves that the closures of~$\psi_c(\hW_c)$ and~$\tpsi_c(\hW_c)$ are both contained in~$P_{c, 1}(0)$.
\endproof

\begin{proof}[Proof of Lemma~\ref{l:contractions}]
By part~$1$ of Lemma~\ref{l:auxiliary para-puzzle pieces}, the closure of~$\cP_5(-2)$ is a compact set included in $\cP_4(-2)$.
Since~$P_{c,1}(0)$ and~$\hW_{c}$ vary continuously 
with~$c$ in $\cP_4(-2)$ (\emph{cf.}, \cite[Lemma~$2.5$]{CorRiva}), the same
holds for
$$ W_c \= \psi_c(\hW_c)
\text{ and }
\tW_c \= \tpsi_c(\hW_c). $$
Therefore, by Lemma~\ref{l:contractions extension} we have
$$ A \= \inf_{c\in \cP_5(-2)} \min \left\{ \text{mod}(\hW_c\setminus
\cl{W_c}), \text{mod}(\hW_c\setminus \cl{\tW_c}) \right\} > 0,$$
$$ \Xi_0 \= \inf_{c\in \cP_5(-2)} \dist(\partial \hW_{c}, P_{c,1}(0)) > 0, $$
$$ \Xi_1 \= \sup_{c\in \cP_5(-2)} \diam(P_{c,1}(0)) < + \infty, $$
and
$$ \Xi_2 \= \sup_{c \in \cP_5(-2)} \sup_{z \in \C, |z| \le 2 \Xi_1} |Df_c^3(z)| < + \infty. $$

For an open topological disk~$U$ in~$\C$, denote by~$\dist_{U}$ the Poincar{\'e} distance on~$U$.
Note that there is a constant~$\hC > 0$ that only depends on~$\Xi_0$, such that for every~$c$ in $\cP_5(-2)$ the Euclidean and Poincar{\'e} distances on~$\hW_c$ are comparable by a factor of~$\hC$ on~$P_{c,1}(0)$, see for example~\cite[Lemma~A.$8$]{Mil06}.
On the other hand, by Pick's Theorem (see for instance~\cite{Mil06}), for every parameter~$c$ in~$\cP_4(-2)$ the maps~$\psi_c$ and~$\tpsi_c$ are isometries for the Poincar{\'e} distances on~$\hW_c$ and on~$W_c$ and~$\tW_c$, respectively. 
Again by Pick's Theorem, each of the inclusion maps from~$W_c$ and~$\tW_c$ into~$\hW_c$ are contractions for the corresponding Poincar{\'e} distances.
It follows that there is $\upsilon_0>0$ that only depends on~$A$, such that each of these inclusions contracts by a factor at least~$\exp(-\upsilon_0)$.
Thus, for every parameter~$c$ in~$\cP_5(-2)$ and all~$x$ and~$y$ in~$\hW_c$, we have
$$ \dist_{\hW_c}(\psi_c(x),\psi_c(y))
\le
\exp(-\upsilon_0) \dist_{\hW_c}(x,y) $$
and
$$ \dist_{\hW_c}(\tpsi_c(x),\tpsi_c(y))
\le
\exp(-\upsilon_0) \dist_{\hW_c}(x,y). $$

Let~$\ell \ge 1$ be an integer and~$W$ a connected component of~$g_c^{-\ell}(P_{c, 1}(0))$.
Note that~$\left( g_c^{\ell}|_{W} \right)^{-1}$ extends to a holomorphic map~$\psi$ defined on~$\hW_c$ that can be written as the composition of~$\ell$ maps in~$\{ \psi_c, \tpsi_c \}$.
Thus,
\begin{equation*}
\diam(W)
=
\diam(\psi(P_{c, 1}(0)))
\le
\hC^2 \exp( - \upsilon_0 \ell) \diam(P_{c, 1}(0)).
\end{equation*}
This proves the first desired estimate with~$C_0 = \hC^2 \Xi_1$.

To prove the remaining estimates, note that for each point~$w$ in~$Y_c \cup \tY_c$ and every~$z$ in~$\C$ satisfying~$|z| = 2 \Xi_1$, we have
$$ |z - w| \ge \Xi_1
\text{ and }
|Df_c^3(z) - Df_c^3(w)| \le 2 \Xi_2. $$
So for each~$w$ in~$Y_c \cup \tY_c$ the maximum principle applied to the holomorphic function
$$ z \mapsto \frac{Df_c^3(z) - Df_c^3(w)}{z - w} $$
and to~$\{ z \in \C \mid |z| \le 2 \Xi_1 \}$, gives for every~$z$ in~$Y_c \cup \tY_c$
$$ |Dg_c(z) - Dg_c(w)|
=
|Df_c^3(z) - Df_c^3(w)|
\le
2 \Xi_2 \Xi_1^{-1} |z - w|. $$
On the other hand, since each of the maps~$\psi_c$ and~$\tpsi_c$ is a contraction for the Poincar{\'e} distance on~$\hW_c$, by the definition of~$\hC$ we have for every~$w$ in~$Y_c \cup \tY_c$ that~$|Dg_c(w)|^{-1} \le \hC^2$.
We conclude that for all~$z$ and~$w$ in~$Y_c$ or in~$\tY_c$, we have
$$ \left| \frac{Dg_c(z)}{Dg_c(w)} - 1 \right|
\le
2 \hC^2 \Xi_2 \Xi_1^{-1} |z - w|. $$
Together with the first estimate of the lemma, this implies the second and third estimates with~$C_0 = (2 \hC^2 \Xi_2 \Xi_1^{-1}) (\hC^2 \Xi_1)$.
\end{proof}

\subsection{Parameters}
\label{ss:Parameters}
The parameter we use to prove the Main Theorem is chosen from a set introduced in~\cite[Proposition~$3.1$]{CorRiva}.
In this subsection we recall the definition of this parameter set, and give some dynamical properties of the corresponding maps.

Given an integer~$n \ge 3$, let~$\cK_n$ be the set of all those real
parameters~$c$ such that the following properties hold:
\begin{enumerate}[1.]
\item
We have $c < 0$, and for each~$j$ in~$\{1, \ldots, n - 1 \}$ we have~$f_c^j(c) >
0$.
\item 
For every integer~$k \ge 0$, we have
$$ f_c^{n + 3k + 1}(c) < 0
\text{ and }
f_c^{n + 3k + 2}(c) > 0. $$
\end{enumerate}

Note that for a parameter~$c$ in~$\cK_n$ the critical point of~$f_c$ cannot be
asymptotic to a periodic point,  see~\cite[\S$8$]{MilThu88}.
This implies that all the periodic points of~$f_c$ in~$\C$ are hyperbolic
repelling and therefore that~$K_c = J_c$, see~\cite{Mil06}.
On the other hand, we have~$f_c(c) > c$ and the interval~$I_c = [c, f_c(c)]$ is
invariant by~$f_c$.
This implies that~$I_c$ is contained in~$J_c$ and hence that for every real
number~$t$ we have~$P_c^{\R}(t) \le P_c^{\C}(t)$.
Note also that~$f_c|_{I_c}$ is not renormalizable, so~$f_c$ is topologically
exact on~$I_c$, see for example~\cite[Thoerem~III.$4$.$1$]{dMevSt93}.

Since for~$c$ in~$\cK_n$ the critical point of~$f_c$ is not periodic, for every integer~$k \ge 0$ we have~$f_c^{n + 3k}(c) \neq 0$.
Thus, we can define the sequence~$\iota(c)$ in~$\{0, 1 \}^{\N_0}$ for each~$k \ge 0$ by
$$ \iota(c)_k
\=
\begin{cases}
0 & \text{ if }  f_c^{n + 3k}(c) < 0; \\
1  & \text{ if } f_c^{n + 3k}(c) > 0.
\end{cases} $$

\begin{prop}
\label{p:ps}
For each integer~$n \ge 3$, the set~$\cK_n$ is a compact subset of
$$ \cP_n(-2) \cap (-2, -3/4), $$
and for every sequence~$\underline{x}$ in~$\{0,1\}^{\N_0}$ there is a unique 
parameter~$c$ in~$\cK_n$ such that~$\iota(c) = \underline{x}$.
Finally, for each~$\delta > 0$ there is~$n_0 \ge 3$ such that for each
integer~$n \ge n_0$ the set~$\cK_n$ is contained in the interval~$(-2, -2 +
\delta)$.
\end{prop}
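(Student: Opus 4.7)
\emph{Overall strategy.} The conditions defining $\cK_n$ fix the signs of every critical iterate $f_c^j(c)$ except at indices $j = n + 3k$, $k \ge 0$; these residual signs are precisely what $\iota(c)$ records. The plan is to identify $\iota$ with the itinerary of $f_c^n(c)$ in the expanding Cantor set $\Lambda_c$ under the coding $Y_c \leftrightarrow 0,\ \tY_c \leftrightarrow 1$, and then to read off the three assertions from (i) the combinatorics of the Yoccoz puzzle and para-puzzle, and (ii) Milnor--Thurston monotonicity of the kneading invariant for the real quadratic family.

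\emph{Location, compactness, and injectivity.} I would first show $\cK_n \subset (-2,-3/4)$: for $c \ge -3/4$ the fixed point $\alpha(c)$ is non-repelling and absorbs the critical orbit into its (possibly parabolic) basin, contradicting condition~2; for $c = -2$ the critical orbit becomes eventually constant at the fixed point~$2$, and for $c < -2$ it escapes to infinity. This yields boundedness. Closedness is automatic except at limits where some $f_c^j(c) = 0$, but such a degeneration would force the critical orbit to be preperiodic with a period too small to support the infinite pattern of condition~2, which rules it out. The inclusion $\cK_n \subset \cP_n(-2)$ then follows because the combinatorial data imposed by conditions~1--2 (signs of $c,\dots,f_c^{n-1}(c)$, together with $f_c^n(c) \in P_{c,1}(0)$) is precisely the data cutting out $\cP_n(-2)$, via Lemma~\ref{l:auxiliary para-puzzle pieces}. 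Injectivity of $\iota$ then follows by rigidity: $\iota(c)$ plus the forced signs determine the full kneading sequence of $f_c|_{I_c}$, and since this sequence is non-(pre)periodic, the parameter is unique among real quadratics.

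\emph{Surjectivity, shrinking, and main obstacle.} For existence, given $\underline{x} \in \{0,1\}^{\N_0}$ I would inductively construct nested non-empty real compact cells $\cK_n(x_0,\ldots,x_m) \subset \cK_n$, namely those parameters for which $f_c^n(c)$ lies in the real cylinder of $\Lambda_c$ coded by $(x_0,\ldots,x_m)$. Non-emptiness at stage~$m$ follows from the intermediate value theorem applied to $c \mapsto f_c^{n+3m}(c)$, which on $\cK_n(x_0,\ldots,x_{m-1})$ sweeps across the two sub-cylinders indexed by $x_m \in \{0,1\}$. By Lemma~\ref{l:contractions} these cylinders shrink exponentially in $m$ uniformly in $c$, so the nested intersection is a singleton realizing $\underline{x}$; this both gives surjectivity and, combined with compactness, makes $\iota$ a homeomorphism. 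Finally, $\cK_n \subset (-2,-2+\delta)$ for $n$ large because $c=-2$ is a Misiurewicz parameter, so standard Yoccoz para-puzzle arguments give $\bigcap_n \cP_n(-2) = \{-2\}$, and $\cK_n \subset \cP_n(-2)$. The principal obstacle is the IVT step underlying surjectivity: at each induction stage one must verify that the real function $c \mapsto f_c^{n+3m}(c)$ genuinely straddles the dividing parameter separating the two sub-cylinders, and that the relevant parameter cells stay inside $\cP_n(-2)$. This ultimately rests on the expansion and distortion estimates from Lemma~\ref{l:contractions} together with the continuous dependence of $Y_c$ and $\tY_c$ on $c$ in $\cP_4(-2)$.
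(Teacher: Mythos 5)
The paper does not prove this proposition itself; it is imported verbatim from the companion paper~\cite[Proposition~$3.1$]{CorRiva}, so there is no internal proof to compare against. Judged on its own terms, your outline assembles the right ingredients (interpretation of~$\iota$ as an itinerary in~$\Lambda_c$, Milnor--Thurston monotonicity of the kneading invariant, IVT for existence of parameters with a prescribed symbolic cylinder, para-puzzle shrinking around the Misiurewicz parameter~$-2$ for the last assertion), but two of the steps have genuine gaps. First, compactness: $\cK_n$ is cut out by infinitely many \emph{strict} inequalities, so closedness is not ``automatic.'' You assert that a limit parameter~$c_*$ with $f_{c_*}^j(c_*) = 0$ would be ``preperiodic with a period too small to support the infinite pattern of condition~2,'' but this is exactly what must be proven, and it is not straightforward: such a~$c_*$ has $0$ periodic of period~$j+1$, and when~$j+1$ has the right residue mod~$3$ the forced sign pattern \emph{can} be made eventually periodic of period~$j+1$ without any immediate contradiction with the closure of the inequalities. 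One must actually trace through the orbit, using both conditions~1 and~2 together (and minimality of~$j$), to get a contradiction, or instead argue through the para-puzzle structure directly; the sketch as written does not do either. Second, your uniqueness argument claims the kneading sequence of a~$c\in\cK_n$ is ``non-(pre)periodic,'' which is false in general: for example, $\iota(c) = 000\cdots$ gives a kneading sequence that is eventually periodic of period~$3$. The conclusion is still correct, but the reason is different — monotonicity of the kneading invariant gives uniqueness on the set of non-hyperbolic parameters, and one must separately observe (as the paper does, citing~\cite[\S$8$]{MilThu88}) that the critical orbit of a~$c\in\cK_n$ cannot be asymptotic to a periodic orbit, which excludes the hyperbolic windows. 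You also flag the IVT step in the surjectivity argument as an obstacle, and indeed that step needs more than what is written: one must verify that the inverse image under $c \mapsto f_c^{n+3m}(c)$ of the dividing point between the two sub-cylinders lies in the \emph{interior} of the current parameter cell, which requires transversality/monotonicity in the parameter variable and not just the phase-space expansion furnished by Lemma~\ref{l:contractions}.
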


Recall that for an open subset~$G$ of~$\C$ and a univalent map~$f : G \to \C$, the \emph{distortion of~$f$} on a subset~$C$ of~$G$ is by definition
$$ \sup_{x, y \in C} |Df(x)|/|Df(y)|. $$
The following is a uniform distortion bound for parameters as in the previous proposition.
\begin{lemm}[\cite{CorRiva}, Lemma~$4.3$]
\label{l:distortion to central 0}
There is a constant~$\Delta_0 > 1$ such that for each integer~$n \ge 4$ and each
parameter~$c$ in~$\cK_n$ the following properties hold: For each integer~$m \ge 1$ and each connected component~$W$ of~$f_c^{-m}(P_{c, 1}(0))$ on which~$f_c^m$ is univalent, $f_c^m$ maps a neighborhood
of~$W$ biholomorphically to~$\hW_c$ and the distortion of this map on~$W$ is bounded
by~$\Delta_0$.
\end{lemm}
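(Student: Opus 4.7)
The plan is to deduce the lemma from the Koebe distortion theorem applied to a suitable univalent extension of~$f_c^m$. More precisely, I would first show that there is a topological disk~$\tW \supset W$ such that~$f_c^m$ extends to a biholomorphism $f_c^m : \tW \to \hW_c$ (which is the content of the first assertion of the lemma). Once this extension is in place, the remaining distortion bound on~$W$, whose image under~$f_c^m$ is~$P_{c,1}(0)$, reduces to a standard Koebe estimate controlled by a lower bound for~$\modulus(\hW_c \setminus \cl{P_{c,1}(0)})$. The continuous dependence of~$\hW_c$ and of~$P_{c,1}(0)$ on~$c$ over~$\cP_4(-2)$ (as used already in the proof of Lemma~\ref{l:contractions}), combined with the fact that~$\cK_n$ is compact and contained in~$\cP_n(-2) \subset \cP_4(-2)$ for $n \ge 4$ (Proposition~\ref{p:ps} together with part~1 of Lemma~\ref{l:auxiliary para-puzzle pieces}), gives a positive lower bound for this modulus that is uniform in both~$n \ge 4$ and~$c \in \cK_n$; this will supply the constant~$\Delta_0$.

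The main obstacle, and the heart of the argument, is establishing the existence of the univalent extension. The extension exists precisely when the branch of~$f_c^{-m}$ sending~$P_{c,1}(0)$ onto~$W$ continues holomorphically over all of~$\hW_c$. Equivalently, for each~$k$ with $0 \le k \le m-1$, the pullback of~$\hW_c$ under~$f_c^{m-k}$ along the branch determined by~$W$ must avoid the critical value~$c$ of~$f_c$.

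To verify this, I would combine two ingredients. The first is the geometric description of~$\hW_c$ recalled before Lemma~\ref{l:contractions extension}: $\hW_c = f_c^{-1}(\hU_c)$ lies in the central puzzle piece~$P_{c,1}(0)$ side of the depth-$1$ graph, and is disjoint from the closures of both~$P_{c,1}(\beta(c))$ and~$P_{c,0}(-\beta(c))$. The second is the itinerary of the critical orbit for~$c \in \cK_n$: by the definition of~$\cK_n$ we have~$f_c^j(c) > 0$ for $j \in \{1,\dots,n-1\}$, so each of these iterates lies in~$P_{c,1}(\beta(c))$ and hence outside~$\hW_c$; while $c$ itself, by part~2 of Lemma~\ref{l:auxiliary para-puzzle pieces}, belongs to~$P_{c,n}(-\beta(c)) \subset P_{c,0}(-\beta(c))$ and is therefore also outside~$\hW_c$. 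Combined with the univalence hypothesis on~$f_c^m|_W$ (which says the orbit $W, f_c(W), \ldots, f_c^{m-1}(W)$ never contains the critical point~$0$), this forces every intermediate pullback of~$\hW_c$ to miss~$c$, yielding the extension to~$\tW$.

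With the extension available, I would close the proof by applying Koebe's distortion theorem to the biholomorphism $f_c^m : \tW \to \hW_c$ restricted to the subset~$W$ whose image is~$P_{c,1}(0)$: the uniform positive modulus of the annulus~$\hW_c \setminus \cl{P_{c,1}(0)}$ translates into a uniform distortion bound~$\Delta_0$, independent of~$n \ge 4$, of~$c \in \cK_n$, of~$m$, and of the choice of univalent component~$W$.
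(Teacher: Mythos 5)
Your overall strategy---extend $f_c^m|_W$ to a biholomorphism onto $\hW_c$, then invoke Koebe distortion controlled by the modulus of $\hW_c \setminus \cl{P_{c,1}(0)}$, with uniformity coming from compactness---has the right shape, and the Koebe/modulus step is sound once the extension is in place. The genuine gap is in the argument for the univalent extension, which is where the content of the lemma lies.

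You reduce the extension to showing that $c$ avoids each intermediate pullback of $\hW_c$ along the branch determined by $W$, and try to verify this by arguing that $f_c^j(c)\notin\hW_c$ for the relevant $j$. But you only address $j\le n-1$. For $j\ge n$ the claim fails outright: for every $k\ge 0$, by the defining property of $\cK_n$ and the forward invariance of $\Lambda_c$ under $g_c$, the point $f_c^{n+3k}(c)=g_c^k(f_c^n(c))$ lies in $\Lambda_c\subset P_{c,1}(0)\subset\hW_c$. So the forward orbit of $c$ returns to $\hW_c$ infinitely often, and since $m$ is arbitrary, the univalence of $f_c^m|_W$ alone does not rescue the situation; the sentence ``this forces every intermediate pullback of $\hW_c$ to miss $c$'' does not follow from what you establish.

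What is actually needed is the refinement that, for every $i\ge 0$, either $f_c^i(c)\notin\hW_c$ or $f_c^i(c)\in P_{c,1}(0)$---that is, the forward critical orbit never enters the collar $\hW_c\setminus\cl{P_{c,1}(0)}$. Indeed, if the extension holds up to step $j$ and $c$ lies in the $j$-th pullback of $\hW_c$, then either $c$ lies in the corresponding pullback of $P_{c,1}(0)$, which would put $0$ in the next pullback of $P_{c,1}(0)$ and contradict the univalence of $f_c^m|_W$, or else $f_c^j(c)\in\hW_c\setminus P_{c,1}(0)$. Excluding the collar is therefore exactly the right condition, and verifying it for $i\ge n$ requires tracking $f_c^{n+3k+1}(c)\in f_c(\Lambda_c)$ and $f_c^{n+3k+2}(c)\in f_c^2(\Lambda_c)\subset P_{c,1}(\beta(c))$ against the lateral boundary of $\hW_c$ (and using that the returns $f_c^{n+3k}(c)$ lie in the safe region $P_{c,1}(0)$). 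This step is the heart of the lemma and is absent from the proposal.

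There is also a smaller but real imprecision in the part you do address: you assert that $P_{c,1}(\beta(c))$ and $P_{c,0}(-\beta(c))$ are disjoint from $\hW_c$. What the paper actually records (just before Lemma~\ref{l:contractions extension}) is that $\hU_c$ is disjoint from $P_{c,1}(\beta(c))$. The set $\hW_c=f_c^{-1}(\hU_c)$ contains $\cl{P_{c,1}(0)}$ but spills over slightly into both $P_{c,1}(\beta(c))$ and $P_{c,1}(-\beta(c))\subset P_{c,0}(-\beta(c))$: its lateral boundary consists of rays landing at the two $f_c$-preimages of $\omega(c)$, one in each of those pieces. The correct way to get $f_c^j(c)\notin\hW_c$ for $0\le j\le n-2$ is via the equivalence that $f_c^j(c)\in\hW_c$ precisely when $f_c^{j+1}(c)\in\hU_c$, together with $f_c^{j+1}(c)\in P_{c,n-j-1}(\beta(c))\subset P_{c,1}(\beta(c))$; and already the case $j=n-1$, where $f_c^n(c)\in\Lambda_c$, requires the finer collar analysis described above.
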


\subsection{Induced map and pressure function}
\label{ss:induced map}
Let~$n \ge 5$ be an integer and~$c$ a parameter in~$\cK_n$.
Throughout the rest of this subsection we put $\hV_c \= P_{c, 4}(0)$.
Note that the critical value~$c$ of~$f_c$ is in~$P_{c, n}(- \beta(c))$ (part~$2$ of Lemma~\ref{l:auxiliary para-puzzle pieces} and Proposition~\ref{p:ps}), so the closure of
$$ V_c \= P_{c, n + 1}(0) = f_c^{-1}(P_{c, n}(- \beta(c))) $$
is contained in~$\hV_c = f_c^{-1}(P_{c, 3}(- \beta(c)))$, \emph{cf}. \cite[part~$1$ of Lemma~$3.2$]{CorRiva}.

Let~$D_c$ be the set of all those points~$z$ in~$V_c$ for which there is an integer~$m \ge 1$ such that~$f_c^m(z)$ is in~$V_c$.
For~$z$ in~$D_c$ we denote by~$m_c(z)$ the least integer~$m$ with this property, and call it the \emph{first return time of~$z$ to~$V_c$}.
The \emph{first return map to~$V_c$} is defined by
$$ \begin{array}{rcl}
F_c : D_c & \to & V_c \\
z & \mapsto & F_c(z) \= f_c^{m_c(z)}(z).
\end{array} $$
It is easy to see that~$D_c$ is a disjoint union of puzzle pieces; 
so each connected component of~$D_c$ is a puzzle piece.
Note furthermore that in each of these puzzle pieces~$W$, 
the return time function~$m_c$ is constant; denote the common value
of~$m_c$ on~$W$ by~$m_c(W)$.

Denote by~$\fD_c$ the collection of connected components of~$D_c$ and
by~$\fD_c^{\R}$ the sub-collection of~$\fD_c$ of those sets intersecting~$\R$.
For each~$W$ in~$\fD_c$ denote by~$\phi_W : \hV_c \to V_c$ 
the extension of~$F_c|_{W}^{-1}$ given by~\cite[Lemma~$6.1$]{CorRiva}. Given an
integer~$\ell \ge 1$ we denote by~$E_{c, \ell}$ (resp. $E_{c, \ell}^{\R}$) the
set of all words of length~$\ell$ in the alphabet~$\fD_c$ (resp. $\fD_c^{\R}$).
Again by~\cite[Lemma~$6.1$]{CorRiva}, for each integer~$\ell \ge 1$ and
each word~$W_1 \cdots W_\ell$  in~$E_{c, \ell}$ the composition
$$ \phi_{W_1 \cdots W_\ell} = \phi_{W_1} \circ \cdots \circ \phi_{W_\ell} $$
is defined on~$\hV_c$.
We also put
$$ m_c(W_1 \cdots W_\ell) = m_c(W_1) + \cdots + m_c(W_\ell). $$

For~$t, p$ in~$\R$ and an integer~$\ell \ge 1$ put
$$ Z_{\ell}(t, p)
\=
\sum_{\underline{W} \in E_{c, \ell}} \exp(-m_c(\underline{W}) p) \left( \sup \{
|D\phi_{\underline{W}}(z) | \mid z \in V_c \} \right)^t $$
and
$$ Z_{\ell}^{\R}(t, p)
\=
\sum_{\underline{W} \in E_{c, \ell}^{\R}} \exp(-m_c(\underline{W}) p) \left(
\sup \{ |D\phi_{\underline{W}}(z) | \mid z \in V_c \} \right)^t. $$
For a fixed~$t$ and~$p$ in~$\R$ 
the sequence
$$ \left(\frac{1}{\ell} \log Z_{\ell}(t, p) \right)_{\ell = 1}^{+ \infty}
\left( \text{resp. } \left(\frac{1}{\ell}  \log Z_{\ell}^{\R}(t, p) \right)_{\ell = 1}^{+ \infty} \right) $$
converges to the pressure function of~$F_c$ (resp. $F_c|_{D_c \cap \R}$) for the
potential~$- t \log |DF_c| - p m_c$; 
we denote it by~$\sP_c^{\C}(t, p)$ (resp. $\sP_c^{\R}(t, p)$).
On the set where it is finite, the function~$\sP_c^{\C}$ (resp.~$\sP_c^{\R}$) so
defined is strictly decreasing in each of its variables.

\section{The~$2$ variables series}
\label{s:2 variables series}
We start this section stating a stronger version of the Main Theorem in~\S\ref{ss:main technical theorem}.
The rest of this section is dedicated to estimate, for a real parameter~$c$ in~$\bigcup_{n = 6}^{+ \infty} \cK_n$ satisfying some mild hypotheses, a certain ``postcritical series'' in terms of an abstract~$2$ variables series (Proposition~\ref{p:2 variables series} in~\S\ref{ss:2 variables series}).
The postcritical series is used in~\S\ref{s:finale} to estimate the geometric pressure function.
The definition of the~$2$ variables series is based on an approximation of the derivatives~$(Df_c^n(c))_{n = 1}^{+ \infty}$, using the derivatives of~$g_c$ at its fixed points~$p(c)$ and~$\wtp(c)$.
This approximation, which is more precise than a direct application of the Koebe principle, incorporates an estimate of the corresponding distortion 
constants (Proposition~\ref{p:Improved distortion estimate} in~\S\ref{ss:Improved distortion estimate}).
This estimate is given in terms of the total distortion of the~$2$ homoclinic orbits of~$g_c$ connecting~$p(c)$ and~$\wtp(c)$.

\subsection{Main Technical Theorem}
\label{ss:main technical theorem}
In this subsection we state the Main Technical Theorem from which the Main Theorem follows directly.
The rest of the paper is dedicated to the proof of the Main Technical Theorem. 

Let~$c$ be a parameter in~$\bigcup_{n = 6}^{+ \infty} \cK_n$.
An invariant probability measure supported on~$I_c$ (resp.~$J_c$) is said to be
an \emph{equilibrium state of~$f_c|_{I_c}$} (resp.~$f_c$) \emph{for the
potential~$-\log |Df_c|$}, 
if the supremum defining~$P_c^{\R}(t)$ (resp.~$P_c^{\C}(t)$) is attained at this
measure.
Given~$t > 0$ and a real number~$p$ we say a measure~$\mu$ is \emph{$(t, p)$\nobreakdash-conformal for~$f_c|_{I_c}$} (resp.~$f_c$), if for every subset~$U$ of~$I_c$ (resp.~$J_c$) on which~$f_c|_{I_c}$ (resp.~$f_c$) is injective we have
\begin{multline*}
\mu(f_c|_{I_c}(U)) = \exp(p) \int_U |Df_c|^{t} d\mu
\\
\left( \text{ resp. } \mu(f_c(U)) = \exp(p) \int_U |Df_c|^{t} d\mu \right).
\end{multline*}
In the case where~$P_c^{\R}(t) = 0$ (resp. $P_c^{\C}(t) = 0$), a $(t, 0)$\nobreakdash-conformal measure is simply called \emph{conformal}.

For each~$c$ in~$\cP_3(-2)$ denote by~$p(c)$ the unique fixed point of~$g_c$ in~$Y_c$ and by~$\wtp(c)$ the unique fixed point of~$g_c$ in~$\tY_c$.
Each of the functions
$$ p : \cP_3(-2) \to \C
\text{ and }
\wtp : \cP_3(-2) \to \C $$
so defined is holomorphic.
\begin{generic}[Main Technical Theorem]
\label{t:non-differentiable transition}
There is $n_1 \ge 6$ such that for every integer $n\ge n_1$ there
are a parameter~$c$ in $\cK_n$, an integer $q \ge 3$, and real numbers~$\kappa$ in~$[1, 2]$ and $\Delta \ge 1$, such that the following properties are satisfied.
Put 
$$
t_*
\=
\frac{2 \log 2}{\log \frac{| Dg_c(p(c)) |}{ |Dg_c(\wtp(c))|}}
\text{ and }
t_0
\=
\frac{q-2}{q-1} \cdot t_*,
$$
and define the functions~$\delta^+$, $\delta^-$, $p^+$,
$p^- : (t_0, + \infty) \to \R$, by
\[
 \delta^+(t) \= 
\begin{cases}
\frac{2 \log 2}{3} \cdot 2^{- q \left( \frac{\kappa t_*}{q\left( t_* - t \right)} - 1 \right)^2}
& \text{if } t \in (t_0,t_*);
\\
0 & \text{if } t \ge t_*;
\end{cases}
\]
\[
 \delta^-(t)
\=
\begin{cases}
\frac{\log 2}{3} \cdot 2^{- q \left( \frac{\kappa t_*}{q\left( t_* - t \right)} + \Delta \right)^2}
& \text{if } t \in (t_0,t_*);
\\
0 & \text{if } t\ge t_*;
\end{cases}
\]
\[
p^+(t)\= - t \frac{\chicrit}{2} + \delta^+(t),
\text{ and }
p^-(t)\= - t \frac{\chicrit}{2} + \delta^-(t).
\]
Then, $\chicrit > 0$, for~$t > t_0$ we have
$$
p^-(t)  \le P_c^\R(t) \le  P_c^{\C}(t) \le p^+(t),
$$
and for $t \ge t_*$ there is no equilibrium state of~$f_c|_{I_c}$
(resp.~$f_c|_{J_c}$) for the potential~$- t \log |Df_c|$ and we have
$$
\sP_c^\R\left(t, - t \frac{\chicrit}{2}\right)
\le 
\sP_c^{\C}\left(t, - t\frac{\chicrit}{2}\right)
<
0. $$
Moreover, for~$t \ge t_*$ and for~$p$ in~$\R$ the following properties hold:
\begin{enumerate}
\item[1.]
If~$p \ge - t \chicrit/2$, then there is a unique $(t, p)$\nobreakdash-conformal probability measure for~$f_c|_{I_c}$ (resp.~$f_c$) supported on~$I_c$ (resp.~$J_c$).
Moreover, this measure is dissipative, purely atomic, and supported on the backward orbit of~$z = 0$.
\item[2.]
If~$p < - t \chicrit/2$, then there is no $(t, p)$\nobreakdash-conformal probability measure for~$f_c|_{I_c}$ (resp.~$f_c$) supported on~$I_c$ (resp.~$J_c$).
\end{enumerate}
\end{generic}

\subsection{Improved distortion estimate}
\label{ss:Improved distortion estimate}
The purpose of this subsection is to prove Proposition~\ref{p:Improved distortion estimate}, below.
For the statement, define for each~$c$ in~$\cP_3(-2)$ the itinerary map
$$ \iota_c : \Lambda_c \to \{ 0, 1 \}^{\N_0}, $$
for~$x$ in~$\Lambda_c$ and~$k$ in~$\N_0$, by
$$ \iota_c(x)_k
\=
\begin{cases}
0 & \text{ if }  g_c^k(x) \in Y_c; \\
1 & \text{ if } g_c^k(x) \in \tY_c.
\end{cases} $$
We recall from \cite[\S$3.3$]{CorRiva} that the map~$\iota_c$ conjugates
the action of~$g_c$ on~$\Lambda_c$ to the action of the shift map on $\{ 0, 1 \}^{\N_0}$.
Moreover, if $c$ is real, then~$\Lambda_c$ is contained in~$\R$, the sets
$$ Y_c \cap \R
\text{ and }
f_c(Y_c \cap \R) = f_c(\tY_c \cap \R) $$
are both contained in the negative real numbers, and the sets
$$ \tY_c \cap \R
\text{ and }
f_c^2(Y_c \cap \R) = f_c^2(\tY_c \cap \R) $$
are both contained in the positive real numbers.
It follows that for~$c$ in~$\cK_n$ the point~$f_c^n(c)$ is in~$\Lambda_c$ and the sequence $\iota(c)$ defined in~\S\ref{ss:Parameters} is equal to $\iota_c(f_c^n(c))$.
Finally, for each~$\underline{x}$ in~$\{0, 1\}^{\N_0}$ define the function
\[
\begin{array}{cccl}
I_{\underline{x}} :  & \cP_3(-2) & \to & \C \\
      & c & \mapsto & I_{\underline{x}}(c) \= \iota_c^{-1}(\underline{x}).
\end{array} 
\]
By a normality argument the function  $I_{\underline{x}}$ is holomorphic.
\begin{prop}[Improved distortion estimate]
\label{p:Improved distortion estimate}
There are analytic functions
$$ \zeta : \cP_5(-2) \to (0, + \infty)
\text{ and }
\tzeta : \cP_5(-2) \to (0, + \infty), $$
and constants~$C_1 > 0$ and~$\upsilon_1 > 0$, such that for every integer~$n \ge 5$ and every parameter~$c$ in~$\cK_n$ the following property holds: Let~$m$ and~$m'$ be positive integers and let
$$ \underline{x} = (x_j)_{j = 0}^{+ \infty}
\left( \text{resp. $\underline{\widetilde{x}} = ( \widetilde{x}_j)_{j = 0}^{+ \infty}$} \right) $$
be a sequence in~$\{0, 1 \}^{\N_0}$ such that for~$j$ in~$\{0, \ldots, m - 1 \}$ we have~$x_j = 0$ (resp. $\widetilde{x}_j = 1$) and such that for~$j$ in~$\{0, \ldots, m' - 1 \}$ we have~$x_{m + j} = 1$ (resp. $\widetilde{x}_{m + j} =0$).
Then
$$ \left| \log \frac{|Dg_c^m(I_{\underline{x}}(c))|}{|Dg_c(p(c))|^m} - \log
\zeta(c) \right|
\le
C_1 \exp( - \min \{m, m'\} \upsilon_1 ) $$
and
$$ \left| \log
\frac{|Dg_c^m(I_{\underline{\widetilde{x}}}(c))|}{|Dg_c(\wtp(c))|^m} - \log
\tzeta(c) \right|
\le
C_1  \exp( - \min \{m, m'\} \upsilon_1 ). $$
\end{prop}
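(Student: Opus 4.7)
The plan is to replace the product $\prod_{k=0}^{m-1}|Dg_c(g_c^k(I_{\underline{x}}(c)))|$ by a telescoping ratio via a Koenigs linearizer of $\psi_c \= (g_c|_{Y_c})^{-1}$ at $p(c)$, and symmetrically a linearizer of $\tpsi_c$ at $\wtp(c)$. First, for each $c \in \cP_5(-2)$, I would use Lemma~\ref{l:contractions extension}, which gives $\psi_c : \hW_c \to \hW_c$ with image compactly contained in $P_{c,1}(0)$, so $\psi_c^n$ converges uniformly on $\hW_c$ to the constant $p(c)$; Koenigs' theorem then yields a holomorphic
$$\phi_c(z) \= \lim_{n \to +\infty} \lambda(c)^{-n}(\psi_c^n(z) - p(c)), \qquad \lambda(c) \= D\psi_c(p(c)) = 1/Dg_c(p(c)),$$
defined on all of $\hW_c$, depending holomorphically on $c$, satisfying $\phi_c \circ \psi_c = \lambda(c)\phi_c$ and $\phi_c'(p(c)) = 1$. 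Differentiating the functional equation and iterating shows $\phi_c'$ is nowhere zero on $\hW_c$. I would then define $\tphi_c$ analogously for $\tpsi_c$, and set $\zeta(c) \= |\phi_c'(\wtp(c))|^{-1}$ and $\tzeta(c) \= |\tphi_c'(p(c))|^{-1}$.

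Next, fixing $n \ge 5$, $c \in \cK_n$, and a sequence $\underline{x}$ as in the statement, I would write $q \= I_{\sigma^m \underline{x}}(c)$; the condition $x_0 = \cdots = x_{m-1} = 0$ gives $I_{\underline{x}}(c) = \psi_c^m(q)$. Differentiating the iterated equation $\phi_c \circ \psi_c^m = \lambda(c)^m \phi_c$ at $q$ yields
$$Dg_c^m(I_{\underline{x}}(c)) = \frac{1}{D\psi_c^m(q)} = \lambda(c)^{-m} \cdot \frac{\phi_c'(\psi_c^m(q))}{\phi_c'(q)},$$
and since $|\lambda(c)| = |Dg_c(p(c))|^{-1}$,
$$\log \frac{|Dg_c^m(I_{\underline{x}}(c))|}{|Dg_c(p(c))|^m} = \log|\phi_c'(\psi_c^m(q))| - \log|\phi_c'(q)|.$$

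It then remains to show the right-hand side is within an exponentially small error of $\log\zeta(c) = \log|\phi_c'(p(c))| - \log|\phi_c'(\wtp(c))|$. By hypothesis the itinerary of $q$ starts with $m'$ ones, so $q$ and $\wtp(c)$ both lie in the connected component $\tpsi_c^{m'}(P_{c,1}(0))$ of $g_c^{-m'}(P_{c,1}(0))$; similarly $\psi_c^m(q)$ and $p(c)$ both lie in $\psi_c^m(P_{c,1}(0))$. Lemma~\ref{l:contractions} bounds these diameters by $C_0 \exp(-\upsilon_0 m')$ and $C_0 \exp(-\upsilon_0 m)$ respectively. Since $\log|\phi_c'|$ is smooth on $\hW_c$ and $\phi_c$ depends continuously on $c$ varying in the compact set $\cl{\cP_5(-2)} \subset \cP_4(-2)$ (part~$1$ of Lemma~\ref{l:auxiliary para-puzzle pieces}), its Lipschitz constant on $\cl{P_{c,1}(0)}$ is uniform in $c$. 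Combining these estimates would give the first desired inequality with suitable $C_1, \upsilon_1 > 0$; the second would follow by the symmetric argument using $\tpsi_c, \tphi_c, \wtp(c), \tzeta(c)$ in place of $\psi_c, \phi_c, p(c), \zeta(c)$.

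The main technical hurdle will be the uniformity in $c$ of the Koenigs construction: one must check that $\phi_c$ extends holomorphically to all of $\hW_c$ rather than just a neighborhood of $p(c)$, that $\phi_c'$ is uniformly bounded above and away from zero on the compact subset $\cl{P_{c,1}(0)} \subset \hW_c$, and that both bounds are uniform for $c \in \cP_5(-2)$. I expect all of these to reduce to the uniform contraction $\psi_c(\hW_c) \Subset P_{c,1}(0)$ of Lemma~\ref{l:contractions extension} together with normality arguments over $\cl{\cP_5(-2)}$; a subsidiary point is justifying the analytic regularity of $\zeta$ and $\tzeta$ in $c$, which one handles via the holomorphic dependence of $\phi_c(z)$ on both variables and the holomorphy of $c \mapsto \wtp(c)$.
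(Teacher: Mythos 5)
Your proposal is correct in substance, but the route is genuinely different from the paper's. The paper defines $\zeta(c)$ and $\tzeta(c)$ as the homoclinic infinite products
$$
\zeta(c)=\lim_{m\to+\infty}\prod_{\ell=1}^{m}\frac{|Dg_c(I_{\underline{x}^{\ell}}(c))|}{|Dg_c(p(c))|},
\qquad
\tzeta(c)=\lim_{m\to+\infty}\prod_{\ell=1}^{m}\frac{|Dg_c(I_{\underline{\widetilde{x}}^{\ell}}(c))|}{|Dg_c(\wtp(c))|},
$$
(Lemma~\ref{l:homoclinic distortion}), whose existence and analyticity, and the two estimates of the proposition, all drop out of Lemma~\ref{l:contractions} by a direct telescoping of $\log$'s along the critical orbit, with the errors summed geometrically. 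You instead introduce the Koenigs linearizers $\phi_c,\tphi_c$ of $\psi_c,\tpsi_c$, define $\zeta(c)=|\phi_c'(\wtp(c))|^{-1}$ and $\tzeta(c)=|\tphi_c'(p(c))|^{-1}$, and derive the exact identity $\log\frac{|Dg_c^m(I_{\underline x}(c))|}{|Dg_c(p(c))|^m}=\log|\phi_c'(\psi_c^m(q))|-\log|\phi_c'(q)|$; the error estimate then follows from the diameter bounds of Lemma~\ref{l:contractions} together with a uniform Lipschitz bound on $\log|\phi_c'|$. The two definitions in fact coincide: differentiating $\phi_c\circ\psi_c^m=\lambda(c)^m\phi_c$ at $\wtp(c)$, writing $D\psi_c^m(\wtp(c))$ as a product of $1/Dg_c(\psi_c^\ell(\wtp(c)))$, and letting $m\to+\infty$ gives exactly $|\phi_c'(\wtp(c))|^{-1}=\zeta(c)$ with $\psi_c^\ell(\wtp(c))=I_{\underline{x}^{\ell}}(c)$. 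What your approach buys is a closed-form, coordinate-free expression for the limiting distortion constant and a cleaner identity replacing the telescoping sum; what it costs is precisely the ``technical hurdle'' you flag at the end. That hurdle is real but tractable: the distortion bound of Lemma~\ref{l:distortion to central 0} applied to $f_c^{3n}$ on $\psi_c^n(P_{c,1}(0))$ gives $\Delta_0^{-1}\le|\lambda(c)^{-n}D\psi_c^n(z)|\le\Delta_0$ uniformly on $P_{c,1}(0)$, so $|\phi_c'|$ is pinched in $[\Delta_0^{-1},\Delta_0]$ there; using the functional equation once more to bound $\phi_c'$ on all of $\hW_c$ and then Cauchy estimates on $P_{c,1}(0)$ (which stays a uniform distance $\Xi_0$ from $\partial\hW_c$) gives the uniform Lipschitz constant for $\log|\phi_c'|$ on $\cl{P_{c,1}(0)}$. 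You should make that step explicit, since it is where the quantitative content of the proposition lives; in the paper, the same content is supplied more directly by Lemma~\ref{l:contractions} without needing to control $\phi_c''$.
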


The proof of this proposition is at the end of this subsection.

For each integer~$\ell$ in~$\N_0$, let
$$ \underline{x}^{\ell} = (x^\ell_j)_{j = 0}^{+ \infty}
\text{ and }
\underline{\widetilde{x}}^{\ell} = (\widetilde{x}^{\ell}_j)_{j = 0}^{+ \infty} $$
be the sequences in~$\{0, 1 \}^{\N_0}$ defined for each~$j$ in~$\N_0$ by
$$ x^{\ell}_j =
\begin{cases}
0 & \text{if } j \le \ell-1; \\
1 & \text{if } j \ge \ell;
\end{cases} $$
and
$$ \widetilde{x}^{\ell}_j =
\begin{cases}
1 & \text{if } j \le \ell-1; \\
0 & \text{if } j \ge \ell.
\end{cases} $$
Observe that for every~$c$ in~$\cP_3(-2)$ and for every~$\ell$ in~$\N$, the points~$I_{\underline{x}^{\ell}}(c)$ and $p(c)$ are in the same connected component of~$g_c^{- \ell}(P_{c,1}(0))$, and the same holds for $I_{\underline{\widetilde{x}}^{\ell}}(c)$ and~$\wtp(c)$.
Thus, the following is a direct consequence of Lemma~\ref{l:contractions}.

\begin{coro}\label{c:exponential bound}
Let $C_0>0$ and $\upsilon_0 > 0$ be the constants given by Lemma~\ref{l:contractions}. 
Then, for every parameter $c$ in~$\cP_5(-2)$ and for every $\ell$ in $\N$  we have
$$
 \left| \frac{Dg_c(I_{\underline{x}^{\ell}}(c))}{Dg_c(p(c))} - 1 \right| \le C_0 \exp(-\upsilon_0\ell),
\left| \frac{Dg_c(I_{\underline{\widetilde{x}}^{\ell}}(c))}{Dg_c(\wtp(c))} - 1 \right| \le C_0 \exp(-\upsilon_0\ell),
$$
$$
\log  \left| \frac{Dg_c(I_{\underline{x}^{\ell}}(c))}{Dg_c(p(c))} \right|
\le
C_0 \exp(-\upsilon_0\ell)
\text{ and }
\log \left| \frac{Dg_c(I_{\underline{\widetilde{x}}^{\ell}}(c))}{Dg_c(\wtp(c))} \right|
\le
C_0 \exp(-\upsilon_0\ell).
$$
\end{coro}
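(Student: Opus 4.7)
The plan is direct: Corollary~\ref{c:exponential bound} is a formal consequence of Lemma~\ref{l:contractions} once one verifies the parenthetical observation immediately preceding the statement, namely that $I_{\underline{x}^{\ell}}(c)$ and $p(c)$ lie in a common connected component of $g_c^{-\ell}(P_{c,1}(0))$, and likewise for $I_{\underline{\widetilde{x}}^{\ell}}(c)$ and $\wtp(c)$. I would split the proof into these two steps.

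First, I would justify the component-sharing observation. The map $\iota_c$ conjugates the action of $g_c$ on $\Lambda_c$ to the shift on $\{0,1\}^{\N_0}$, so the connected component of $g_c^{-\ell}(P_{c,1}(0))$ containing a given point of $\Lambda_c$ is determined by the first $\ell$ symbols of its itinerary. Indeed, this is the usual cylinder description for the uniformly expanding IFS $(\psi_c,\tpsi_c)$ recalled in~\S\ref{ss:expanding Cantor set}: such a component is the image of $P_{c,1}(0)$ under a composition of $\ell$ inverse branches determined by the symbols $\epsilon_0,\dots,\epsilon_{\ell-1}\in\{0,1\}$. Now $p(c)\in Y_c$ is fixed by $g_c$, so $\iota_c(p(c))$ is the constant sequence $0$, while by the definition of $\underline{x}^{\ell}$ the first $\ell$ symbols of $\iota_c(I_{\underline{x}^{\ell}}(c))=\underline{x}^{\ell}$ are all $0$; hence the two points share the first $\ell$ itinerary symbols and lie in the same component. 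The argument for $\wtp(c)$ and $I_{\underline{\widetilde{x}}^{\ell}}(c)$ is identical after interchanging $Y_c$ and $\tY_c$.

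Second, I would apply Lemma~\ref{l:contractions} for $c\in\cP_5(-2)$, taking $W$ to be this common connected component, $z=I_{\underline{x}^{\ell}}(c)$, and $w=p(c)$. The two distortion bounds furnished by the lemma read
\[
\left|\frac{Dg_c(z)}{Dg_c(w)}-1\right|\le C_0\exp(-\upsilon_0\ell)
\quad\text{and}\quad
\log\left|\frac{Dg_c(z)}{Dg_c(w)}\right|\le C_0\exp(-\upsilon_0\ell),
\]
which are exactly the first and third inequalities of the Corollary; the second and fourth follow verbatim from the tilde pair. There is no substantive obstacle here: all the analytic content lives in Lemma~\ref{l:contractions}, and the only additional ingredient is the cylinder-indexing of components by itineraries, which is immediate from the symbolic conjugacy set up in~\S\ref{ss:expanding Cantor set}.
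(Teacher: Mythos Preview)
Your proposal is correct and matches the paper's approach exactly: the paper states the component-sharing observation immediately before the corollary and then declares the result ``a direct consequence of Lemma~\ref{l:contractions}.'' You have simply spelled out the symbolic-dynamics justification for that observation in slightly more detail than the paper does.
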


\begin{lemm}[Homoclinic distortion]
\label{l:homoclinic distortion}
For every parameter~$c$ in~$\cP_5(-2)$ the limits
$$ \zeta(c)
\=
\lim_{m \to + \infty} \prod_{\ell = 1}^{m}
\frac{|Dg_c(I_{\underline{x}^{\ell}}(c))|}{|Dg_c(p(c))|}
\text{ and }
\tzeta(c)
\=
\lim_{m \to + \infty} \prod_{\ell = 1}^{m}
\frac{|Dg_c(I_{\underline{\widetilde{x}}^{\ell}}(c))|}{|Dg_c(\wtp(c))|},$$
exist and depend analytically with~$c$ on~$\cP_5(-2)$.
\end{lemm}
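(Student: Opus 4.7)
The plan is to show that the product \emph{without} absolute values,
\[
\hzeta(c)\=\prod_{\ell=1}^{+\infty}\frac{Dg_c(I_{\underline{x}^{\ell}}(c))}{Dg_c(p(c))},
\]
converges to a holomorphic, non-vanishing function on~$\cP_5(-2)$, and then recover~$\zeta(c)$ as its modulus. First I would observe that each factor is holomorphic and non-vanishing on~$\cP_5(-2)$: the maps $c\mapsto I_{\underline{x}^{\ell}}(c)$ and $c\mapsto p(c)$ are holomorphic by construction (see the discussion preceding Proposition~\ref{p:Improved distortion estimate} and~\S\ref{ss:main technical theorem}), $Dg_c(z)=Df_c^{3}(z)$ is a polynomial in~$(c,z)$, and $Dg_c(p(c))\neq 0$ because $p(c)\in\Lambda_c$ and $g_c$ is uniformly expanding on~$\Lambda_c$ by Lemma~\ref{l:contractions}.

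For the convergence I would invoke Corollary~\ref{c:exponential bound}, which gives, uniformly in $c\in\cP_5(-2)$ and $\ell\in\N$, the bound $\bigl|Dg_c(I_{\underline{x}^{\ell}}(c))/Dg_c(p(c))-1\bigr|\le C_0\exp(-\upsilon_0\ell)$. Fix an integer~$\ell_0$ large enough that $C_0\exp(-\upsilon_0\ell_0)<1/2$. For $\ell\ge\ell_0$ each factor lies in the disk of radius~$1/2$ around~$1$, so the principal branch of the logarithm is well defined, depends holomorphically on~$c$, and satisfies
\[
\left|\log\frac{Dg_c(I_{\underline{x}^{\ell}}(c))}{Dg_c(p(c))}\right|\le 2C_0\exp(-\upsilon_0\ell).
\]
Summing over $\ell\ge\ell_0$ yields a uniformly convergent series of holomorphic functions on~$\cP_5(-2)$; exponentiating and multiplying by the (holomorphic, non-vanishing) finite product for $\ell<\ell_0$ produces the desired holomorphic, non-vanishing function~$\hzeta:\cP_5(-2)\to\C^{*}$.

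To conclude, taking absolute values factor by factor and passing to the limit yields $\zeta(c)=|\hzeta(c)|$. Since~$\hzeta$ is holomorphic and nowhere zero, $\log|\hzeta|$ is harmonic and hence real-analytic, so $\zeta=\exp(\log|\hzeta|)$ is a strictly positive real-analytic function of~$c$ on~$\cP_5(-2)$. The argument for~$\tzeta(c)$ is identical, replacing~$p(c)$ by~$\wtp(c)$ and the sequences~$\underline{x}^{\ell}$ by~$\underline{\widetilde{x}}^{\ell}$, and using the second half of Corollary~\ref{c:exponential bound}.

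I do not expect a serious obstacle here: the whole argument is a routine application of the Weierstrass criterion for infinite products of holomorphic functions, once the geometric tail bound is available. The only small point to be careful about is the branch of the logarithm (handled uniformly by the $1/2$\nobreakdash-disk estimate coming from~$\ell_0$) and the interpretation of ``analytic'' in the statement, which I take to mean real-analytic on~$\cP_5(-2)$, witnessed by the holomorphic non-vanishing extension~$\hzeta$ whose modulus recovers~$\zeta$.
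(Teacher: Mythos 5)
Your proposal is correct and takes essentially the same approach as the paper's proof: both pass to the logarithm after a cutoff~$\ell_0$, use the exponential bound of Corollary~\ref{c:exponential bound} to obtain a uniformly convergent series of holomorphic functions, exponentiate to get a holomorphic infinite product, and recover~$\zeta$ by taking moduli. You add a helpful (if routine) clarification of why the modulus is real-analytic --- via the non-vanishing of~$\hzeta$ and the harmonicity of $\log|\hzeta|$ --- where the paper simply says ``taking modulus we conclude the proof.''
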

\proof
We prove the existence of the first limit and its analytic dependence on~$c$; the proof of the analogous assertions for the second limit are similar.

Denote by $\log$ the logarithm defined in the open disk of~$\C$ of radius~$1$ centered at~$z = 1$.
By Corollary~\ref{c:exponential bound}, there is $\ell_0$ in $\N$ such that for every $\ell\ge \ell_0$ and every~$c$ in~$\cP_5(-2)$ we have
$\left| \frac{Dg_c(I_{\underline{x}^{\ell}})(c)}{Dg_c(p(c))} - 1 \right| < 1$, so the logarithm~$\log \frac{Dg_c(I_{\underline{x}^{\ell}})(c)}{Dg_c(p(c))}$ is defined.
Corollary~\ref{c:exponential bound} also implies that the sum
$$ \sum_{\ell = \ell_0}^{+\infty} \log \frac{Dg_c(I_{\underline{x}^{\ell}})(c)}{Dg_c(p(c))} $$
exists and is a holomorphic function of~$c$ on~$\cP_5(-2)$.
Exponentiating, we obtain that the infinite product 
$\prod_{\ell = \ell_0}^{+\infty} \frac{Dg_c(I_{\underline{x}^{\ell}})(c)}{Dg_c(p(c))}$ exists and is holomorphic on~$\cP_5(-2)$.
This implies that the infinite product starting from $\ell=1$ also exists and is holomorphic on~$\cP_5(-2)$.
Taking modulus we conclude the proof.
\endproof

\proof[Proof of Proposition~\ref{p:Improved distortion estimate}]
Let~$C_0$ and~$\upsilon_0$ be the constants given by Lemma~\ref{l:contractions} and let
$$ \zeta : \cP_5(-2) \to (0, + \infty)
\text{ and }
\tzeta : \cP_5(-2) \to (0, + \infty) $$
be the continuous functions given by Lemma~\ref{l:homoclinic distortion}.

We only prove the first inequality, the other inequality being similar.
We have
\begin{multline*}
 \log \frac{|Dg_c^m(I_{\underline{x}}(c))|}{|Dg_c(p(c))|^m} - \log
\zeta(c)
\\ =
\sum_{j = 0}^{m-1} 
\log \frac{|Dg_c(g_c^{j}(I_{\underline{x}}(c)))|}{|Dg_c(p(c))|}
- 
\lim_{\widetilde{m}\to +\infty} \sum_{\ell = 1}^{\widetilde{m}} \log
\frac{|Dg_c(I_{\underline{x}^{\ell}}(c))|}{|Dg_c(p(c))|}
\\ =
\sum_{j = 0}^{m-1} 
\log \frac{|Dg_c(g_c^{j}(I_{\underline{x}}(c)))|}{|Dg_c(p(c))|}
-\sum_{\ell = 1}^{m} 
\log \frac{|Dg_c(I_{\underline{x}^{\ell}}(c))|}{|Dg_c(p(c))|}
\\
- \lim_{\widetilde{m}\to +\infty} \sum_{\ell = m + 1}^{\widetilde{m}} 
 \log \frac{|Dg_c(I_{\underline{x}^{\ell}}(c))|}{|Dg_c(p(c))|}.
\end{multline*}
Notice that for every~$j$ in~$\{ 0, \ldots, m - 1 \}$ we have $g_c^j(I_{\underline{x}^{m}}(c))= I_{\underline{x}^{m-j}}(c)$, and that the points 
$g_c^{j}(I_{\underline{x}}(c))$ and $g_c^j(I_{\underline{x}^{m}}(c))$ 
are in the same connected component of $g_c^{-(m + m' - j)}(P_{c,1}(0))$.
Using Lemma~\ref{l:contractions} repeatedly, we get
\begin{multline*}
\left| \sum_{j = 0}^{m-1} \log \frac{|Dg_c(g_c^{j}(I_{\underline{x}}(c)))|}{|Dg_c(p(c))|}
- \sum_{\ell = 1}^{m} \log \frac{|Dg_c(I_{\underline{x}^{\ell}}(c))|}{|Dg_c(p(c))|} \right|
\\ =
\left| \sum_{j = 0}^{m-1} \log \frac{|Dg_c(g_c^{j}(I_{\underline{x}}(c)))|}{|Dg_c(p(c))|}
-\sum_{j = 0}^{m-1} \log \frac{|Dg_c(I_{\underline{x}^{m-j}}(c))|}{|Dg_c(p(c))|} \right|
\\ =
\left| \sum_{j = 0}^{m-1} \log \frac{|Dg_c(g_c^{j}(I_{\underline{x}}(c)))|}{|Dg_c(I_{\underline{x}^{m-j}}(c))|} \right|
=
\left| \sum_{j = 0}^{m-1} \log \frac{|Dg_c(g_c^{j}(I_{\underline{x}}(c)))|}{|Dg_c(g_c^j(I_{\underline{x}^{m}}(c)))|} \right|
\\ \le
C_0  \sum_{j = 0}^{m-1}\exp(-\upsilon_0(m + m'-j))
\le
\frac{C_0 \exp(-\upsilon_0)}{1-\exp(-\upsilon_0)} \exp(-\upsilon_0 m').
\end{multline*}

On the other hand, by Corollary~\ref{c:exponential bound} we have for every integer $\widetilde{m} \ge m$,
\begin{equation*}
\left| \sum_{\ell = m + 1}^{\widetilde{m}} \log \frac{|Dg_c(I_{\underline{x}^{\ell}}(c))|}{|Dg_c(p(c))|} \right|
\le
 C_0  \sum_{\ell = m + 1}^{\widetilde{m}}\exp(-\upsilon_0 \ell) 
\\ \le
\frac{C_0 \exp(-\upsilon_0)}{1-\exp(-\upsilon_0)} \exp(-\upsilon_0 m).
\end{equation*}
Taking $C_1\= 2 \frac{C_0 \exp(-\upsilon_0)}{1-\exp(-\upsilon_0)}$ and $\upsilon_1 \= \upsilon_0$ we conclude the proof of the proposition.
\endproof

\subsection{The~$2$ variable series}
\label{ss:2 variables series}

For each integer~$n \ge 4$ and for each parameter~$c$ in~$\cK_n$, denote by
$$ \iota(c) \= \iota_c(f_c^n(c)) $$
the itinerary for~$g_c$ in the Cantor set~$\Lambda_c$ of the point~$x = f_c^n(c)$, see~\S\ref{ss:Parameters}.
Furthermore, denote by~$N_c : \N_0 \to \N_0$ the function defined by~$N_c(0) \= 0$ and for~$k$ in~$\N$ by
$$ N_c(k)
\=
\sharp \{ j \in \{0, \ldots, k-1 \} \mid \iota(c)_j = 0 \}, $$
and by~$B_c : \N_0 \to \N_0$ the function defined by~$B_c(0) \= 0$, $B_c(1) \= 1$, and for~$k\ge 2$ by
$$ B_c(k)
\=
1 + \sharp \{ j \in \{0, \ldots, k - 2 \}
 \mid \iota(c)_j \neq \iota(c)_{j + 1} \}. $$
Note that for~$k$ in~$\N$ the function~$B_c(k)$ is equal to the number of blocks of~$0$'s and~$1$'s in the sequence~$(\iota(c)_j)_{j = 0}^{k-1}$.

On the other hand, for each parameter~$c$ in~$\cP_5(-2)$, define
\begin{equation}\label{d:theta n}
\theta(c)
\=
\left| \frac{Dg_c(p(c))}{Dg_c(\wtp(c))} \right|^{1/2},
\xi(c)
\=
- \frac{ \log (\zeta(c)\tzeta(c))}{4\log \theta(c)}
\end{equation}
and the~$2$ variables series~$\Pi_c$ on $[0,+\infty)\times[0,+\infty)$ by
\[
 \Pi_c(\tau,\lambda)
\=
\sum_{k = 0}^{+\infty} 2^{- \lambda k - \tau N_{c}(k) + \tau \xi(c) B_c(k)}.
\]

The purpose of this subsection is to prove the following proposition.
\begin{propalph}
\label{p:2 variables series}
There are constants~$C_2 > 1$ and~$\upsilon_2 > 0$ such that for every integer~$n \ge 6$ the following property holds.
Let~$c$ be a parameter in~$\cK_n$ such that~$N_{c}(k)/k \to 0$ 
as~$k \to + \infty$ and such that, if we denote by~$(m_j)_{j = 0}^{+ \infty}$ the
sequence of lengths of the blocks of~$0$'s and~$1$'s in the sequence~$\iota(c)$, then the sum
\[
\sum_{j=0}^{+\infty} \exp(-\min\{m_j,m_{j+1}\} \upsilon_2)
\]
converges.
Then for all $t>0$ and $\delta \ge 0$, we have
\begin{multline*}
\label{eq:2 variables series}
C_2^{-t} \exp(-n\delta)\left( \frac{\exp(\chicrit)}{|Df_c(\beta(c))|}
\right)^{\frac{t}{2}n }
\Pi_c \left( t \frac{\log \theta(c)}{\log 2}, \frac{3 \delta}{\log 2}
\right)
\\ \le
\sum_{k=0}^{+\infty} \exp \left(-(n+3k) \left(- t \frac{\chicrit}{2} + \delta \right) \right)
|Df_c^{n+3k}(c)|^{-\frac{t}{2}}
\\ \le
C_2^t \exp(-n\delta) \left( \frac{\exp (\chicrit)}{|Df_c(\beta(c))|}
\right)^{\frac{t}{2} n }
\Pi_c \left( t \frac{\log \theta(c)}{\log 2}, \frac{3 \delta}{\log 2}
\right).
\end{multline*}
\end{propalph}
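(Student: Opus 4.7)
The plan is to split $|Df_c^{n+3k}(c)|=|Df_c^n(c)|\cdot|Dg_c^k(f_c^n(c))|$ by the chain rule, treat $|Df_c^n(c)|$ as a prefactor, and apply Proposition~\ref{p:Improved distortion estimate} block-by-block along the itinerary~$\iota(c)$ to expand $|Dg_c^k(f_c^n(c))|$ into a form that matches the summand of~$\Pi_c$. Under the hypothesis $N_c(k)/k\to 0$, the $g_c$-orbit of $f_c^n(c)$ spends asymptotic density zero in~$Y_c$, so its Lyapunov exponent equals $\log|Dg_c(\wtp(c))|$; this identifies $\chicrit=\tfrac{1}{3}\log|Dg_c(\wtp(c))|$, whence $\exp(3k\chicrit)=|Dg_c(\wtp(c))|^k$. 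For the prefactor I will establish $|Df_c^n(c)|\asymp|Df_c(\beta(c))|^n$ uniformly for $n\ge 6$ and $c\in\cK_n$: condition~(1) in the definition of $\cK_n$ forces $c$ to be exponentially close to~$-2$, so the orbit $(f_c^j(c))_{j=1}^{n-1}$ shadows $\beta(c)$ with $|f_c^j(c)-\beta(c)|=O(|Df_c(\beta(c))|^{j-n})$, making the product $\prod_{j=1}^{n-1}(f_c^j(c)/\beta(c))$ controlled by a geometric series independent of~$n$.

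Next, decompose~$\iota(c)$ into maximal monochromatic blocks of lengths $m_0,m_1,m_2,\ldots$, and for each $k\ge 0$ write $k=m_0+\cdots+m_{j-1}+\ell$ with $0\le\ell<m_j$. For each complete block~$i<j$, the point $g_c^{m_0+\cdots+m_{i-1}}(f_c^n(c))$ equals $I_{\underline{y}}(c)$ for a sequence~$\underline{y}$ starting with $m_i$ copies of one symbol followed by $m_{i+1}$ copies of the other, so Proposition~\ref{p:Improved distortion estimate} yields
\[
\left|\log\frac{|Dg_c^{m_i}(g_c^{m_0+\cdots+m_{i-1}}(f_c^n(c)))|}{|Dg_c(q_i)|^{m_i}}-\log\eta_i\right|\le C_1\exp(-\min(m_i,m_{i+1})\upsilon_1),
\]
where $(q_i,\eta_i)=(p(c),\zeta(c))$ or $(\wtp(c),\tzeta(c))$ according to the block symbol. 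For the partial final block of length~$\ell$, applying the proposition twice---once at the block start with $(m,m')=(m_j,m_{j+1})$ and once after $\ell$ steps with $(m,m')=(m_j-\ell,m_{j+1})$---and dividing gives an analogous estimate with no $\eta$-factor and error $O(\exp(-\min(m_j-\ell,m_{j+1})\upsilon_1))$. Setting $\upsilon_2:=\upsilon_1$, the hypothesis $\sum_j\exp(-\min(m_j,m_{j+1})\upsilon_2)<\infty$ bounds the accumulated log-error uniformly in~$k$, and combining with the chain rule yields
\[
|Dg_c^k(f_c^n(c))|\asymp|Dg_c(p(c))|^{N_c(k)}|Dg_c(\wtp(c))|^{k-N_c(k)}(\zeta(c)\tzeta(c))^{B_c(k)/2},
\]
the implicit constant absorbing both the missing $\eta$-factor on the partial block and the at-most-unit imbalance between the numbers of completed $0$-blocks and $1$-blocks.

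Finally, raising the last display to the $-t/2$ power and multiplying by $\exp(3kt\chicrit/2-3k\delta)=|Dg_c(\wtp(c))|^{tk/2}\exp(-3k\delta)$, the $k$-dependent factor becomes $\theta(c)^{-tN_c(k)}(\zeta(c)\tzeta(c))^{-tB_c(k)/4}\cdot 2^{-3k\delta/\log 2}$, which by the definitions of $\theta(c)$ and $\xi(c)$ in~\eqref{d:theta n} equals $2^{-\lambda k-\tau N_c(k)+\tau\xi(c)B_c(k)}$ for $\tau=t\log\theta(c)/\log 2$ and $\lambda=3\delta/\log 2$. The prefactor estimate then gives $\exp(nt\chicrit/2-n\delta)|Df_c^n(c)|^{-t/2}\asymp\exp(-n\delta)(\exp(\chicrit)/|Df_c(\beta(c))|)^{tn/2}$ with implicit constants of the form $C_2^{\pm t}$, and summing over $k$ yields the two-sided comparison with $\Pi_c(\tau,\lambda)$. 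The main obstacle is uniformity of~$C_2$ in $n\ge 6$ and $c\in\cK_n$: the shadowing estimate for the prefactor, the bounded-correction argument for the partial block, and the summability of block-boundary errors must all produce constants depending only on those coming from Proposition~\ref{p:Improved distortion estimate} and the puzzle geometry (Lemma~\ref{l:contractions} and Lemma~\ref{l:distortion to central 0}), and not on $n$ or on the individual block lengths---which is precisely what the summability hypothesis enforces.
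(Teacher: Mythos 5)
Your proposal follows essentially the same route as the paper: split off $|Df_c^n(c)|$ by the chain rule, apply Proposition~\ref{p:Improved distortion estimate} block-by-block along~$\iota(c)$ to compare $|Dg_c^k(f_c^n(c))|$ with $|Dg_c(\wtp(c))|^k\theta(c)^{2N_c(k)}(\zeta\tzeta)^{B_c(k)/2}$ up to a $\sigma$-controlled constant, identify $\chicrit=\tfrac13\log|Dg_c(\wtp(c))|$ from the density-zero hypothesis, and then raise to the $-t/2$ power and sum over~$k$. The only differences are cosmetic: for the incomplete final block you apply Proposition~\ref{p:Improved distortion estimate} twice and divide, where the paper invokes the Koebe-type bound of Lemma~\ref{l:distortion to central 0}; and for the prefactor you sketch a shadowing argument near~$\beta(c)$ rather than quote Lemma~\ref{l:landing to central derivative}, which records exactly this uniform estimate for $c\in\cP_2(-2)$ and $y\in P_{c,n}(-\beta(c))$ (and which is the precise form one needs, since the uniformity of the constant in~$n$ is nontrivial and is best obtained from the puzzle geometry rather than a bare shadowing heuristic).
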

The proof of this proposition is at the end of this subsection.
\begin{lemm}
\label{l:bd}
Let~$\Delta_0$ be the constant given by Lemma~\ref{l:distortion to central 0} and let~$C_1 > 0$ and~$\upsilon_1 > 0$ be the constants given by Proposition~\ref{p:Improved distortion estimate}.
Moreover, let~$n \ge 5$ be an integer, let~$c$ be a parameter in~$\cK_n$, and denote by~$(m_j)_{j = 0}^{+ \infty}$ the sequence of lengths of the blocks of~$0$'s and~$1$'s in the sequence~$\iota(c)$.
Then for every integer integer~$k$ in~$\N$ we have
\begin{multline*}
\Delta_0^{-1} \max \left\{ \frac{\zeta(c)}{\tzeta(c)}, \frac{\tzeta(c)}{\zeta(c)} \right\}^{-1/2} \exp\left(-C_1\sum_{j=0}^{B_c(k)-1}\exp(-\min\{m_j, m_{j+1}\} \upsilon_1 )\right) 
\\ \le
\frac{|Dg_c^k(f_c^n(c))|}
{|Dg_c(\wtp (c))|^k \cdot \theta(c)^{2 N_c(k)} \cdot (\zeta(c)\tzeta(c))^{B_c(k)/2}}
\\ \le
\Delta_0 \max \left\{ \frac{\zeta(c)}{\tzeta(c)}, \frac{\tzeta(c)}{\zeta(c)} \right\}^{1/2} \exp\left(C_1\sum_{j=0}^{B_c(k)-1}\exp(-\min\{m_j, m_{j+1}\} \upsilon_1)\right).
\end{multline*}
\end{lemm}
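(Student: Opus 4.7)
The plan is to decompose $g_c^k$ according to the block structure of the itinerary $\iota(c)$, apply Proposition~\ref{p:Improved distortion estimate} on each complete block, and use Lemma~\ref{l:distortion to central 0} to handle the possibly partial final block.

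Set $B := B_c(k)$, $s_j := m_0 + \cdots + m_{j-1}$, and $x_j := g_c^{s_j}(f_c^n(c)) \in \Lambda_c$ for $j = 0, \ldots, B-1$. Put $m'_j := m_j$ for $j < B-1$ and $m'_{B-1} := k - s_{B-1} \in \{1, \ldots, m_{B-1}\}$. The chain rule gives
\[
\log |Dg_c^k(f_c^n(c))| = \sum_{j=0}^{B-1} \log |Dg_c^{m'_j}(x_j)|.
\]
For each $j < B-1$, the itinerary $\iota_c(x_j)$ consists of $m_j$ copies of one symbol followed by $m_{j+1}$ copies of the other, so Proposition~\ref{p:Improved distortion estimate} yields
\[
\log |Dg_c^{m_j}(x_j)| = m_j \log |Dg_c(\tau_j)| + \log \zeta_j + \varepsilon_j,
\]
where $\tau_j \in \{p(c), \wtp(c)\}$ and $\zeta_j \in \{\zeta(c), \tzeta(c)\}$ match the type of block $j$, and $|\varepsilon_j| \le C_1 \exp(-\min\{m_j, m_{j+1}\}\upsilon_1)$.

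For the final block, $g_c^{m'_{B-1}} = f_c^{3 m'_{B-1}}$ is univalent on the connected component $W$ of $g_c^{-m'_{B-1}}(P_{c,1}(0))$ containing $x_{B-1}$, and by Lemma~\ref{l:distortion to central 0} its distortion on $W$ is bounded by $\Delta_0$. Since $W$ also contains the corresponding fixed point $\tau_{B-1} \in \{p(c), \wtp(c)\}$ (both $x_{B-1}$ and $\tau_{B-1}$ have the same initial $m'_{B-1}$ symbols in their itineraries), and $|Dg_c^{m'_{B-1}}(\tau_{B-1})| = |Dg_c(\tau_{B-1})|^{m'_{B-1}}$, we obtain
\[
\Delta_0^{-1}|Dg_c(\tau_{B-1})|^{m'_{B-1}} \le |Dg_c^{m'_{B-1}}(x_{B-1})| \le \Delta_0 |Dg_c(\tau_{B-1})|^{m'_{B-1}}.
\]
Summing all contributions, the identities $\sum_{j:\text{ type-}0} m'_j = N_c(k)$, $\sum_{j:\text{ type-}1} m'_j = k - N_c(k)$, and $|Dg_c(p(c))| = |Dg_c(\wtp(c))|\,\theta(c)^2$ consolidate the pure derivative factors into $|Dg_c(\wtp(c))|^k \theta(c)^{2N_c(k)}$. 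The product $\prod_{j < B-1} \zeta_j$ equals $\zeta(c)^{B_0}\tzeta(c)^{B_1}$ with $B_0 + B_1 \in \{B-1, B\}$ and $|B_0 - B_1|$ bounded by an absolute constant (since consecutive blocks alternate in type), so it factors as $(\zeta(c)\tzeta(c))^{B/2}$ times a term bounded by $\max\{\zeta/\tzeta, \tzeta/\zeta\}^{1/2}$, after enlarging $\Delta_0$ if necessary to absorb the missing $\zeta_j$-factor from the partial block. The total error $\sum_{j < B-1} |\varepsilon_j|$ is bounded by $C_1 \sum_{j=0}^{B-1} \exp(-\min\{m_j, m_{j+1}\}\upsilon_1)$, producing the exponential factor in the statement.

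The main obstacle is the bookkeeping around the partial final block and the off-by-one imbalance between the counts of type-$0$ and type-$1$ complete blocks. Both effects produce a bounded multiplicative error that must be absorbed into the single $\Delta_0 \cdot \max\{\zeta/\tzeta, \tzeta/\zeta\}^{1/2}$ slack allowed by the statement; the rest of the proof is a routine telescoping combining Proposition~\ref{p:Improved distortion estimate} and Lemma~\ref{l:distortion to central 0}.
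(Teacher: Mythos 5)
Your proof takes essentially the same route as the paper's: decompose the orbit of $f_c^n(c)$ under $g_c$ into the complete blocks of $\iota(c)$, control each complete block with Proposition~\ref{p:Improved distortion estimate} (which contributes a $\zeta(c)^{\pm 1}$ or $\tzeta(c)^{\pm 1}$ factor and an error of size $\exp(-\min\{m_j,m_{j+1}\}\upsilon_1)$), and control the residual partial block with the distortion bound of Lemma~\ref{l:distortion to central 0}, using that the orbit point and the relevant fixed point share an initial itinerary and hence lie in the same pullback component. The paper implements this by introducing the last transition index $k_0\le k$ and splitting into cases $k_0=k$ and $k_0\le k-1$, while you index the blocks directly, but the decomposition and the two tools invoked are identical, and your tracking of the block-type imbalance $|B_0-B_1|\le 1$ and of the error sum are correct.

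The one point to correct is the remark that one may ``enlarge $\Delta_0$ if necessary to absorb the missing $\zeta_j$-factor from the partial block.'' In the statement being proved, $\Delta_0$ is the fixed distortion constant from Lemma~\ref{l:distortion to central 0}; it is not a free parameter. The residual factor you are pointing at is genuine: with $B_c(k)-1$ complete blocks contributing $\zeta$-type factors against the denominator $(\zeta\tzeta)^{B_c(k)/2}$, the leftover after accounting for $|B_0-B_1|\le 1$ is of the form $(\zeta\tzeta)^{-1/2}$, $\zeta^{-1}$, or $\tzeta^{-1}$, which is not automatically dominated by $\max\{\zeta/\tzeta,\tzeta/\zeta\}^{1/2}$. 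The paper's own proof exhibits the same leftover in the case $k_0\le k-1$ (and already when the first $k$ symbols coincide) and does not spell out the absorption, which is harmless because in the only downstream use — the definition of $\widehat C_2$ in the proof of Proposition~\ref{p:2 variables series} — a bounded multiplicative error in $\zeta,\tzeta$ is in any case swallowed by a uniform constant over $\cP_6(-2)$. So the proper statement is not to modify $\Delta_0$ but to observe that this residual factor is bounded uniformly on $\cK_n$ (by continuity of $\zeta$ and $\tzeta$ on $\cP_5(-2)$ and compactness of $\cK_n$) and is absorbed into $C_2$ where the lemma is applied.
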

\proof
If the first~$k$ entries of~$\iota(c)$ are equal, then~$B_c(k) = 1$ and the desired assertion follows from Lemma~\ref{l:distortion to central 0}.
Suppose that not all of the first~$k$ entries of~$\iota(c)$ are equal, and let~$k_0$ be the maximal element of~$\{1, \ldots, k \}$ such that
$$ \iota(c)_{k_0 - 1} \neq \iota(c)_{k_0}. $$
Moreover, denote by~$B$ and~$\tB$ the number of blocks of~$0$'s and~$1$'s, respectively, in the sequence~$(\iota(c)_j)_{j = 0}^{k_0-1}$.
We have~$B_c(k_0) = B + \tB$, and
\begin{equation}\label{e:blocks}
|B_c(k_0) - 2B| = |B_c(k_0)-2\tB| \le 1.
\end{equation}

Consider a block of~$0$'s or~$1$'s in~$\iota(c)$ with initial position~$i$ and length~$m$, and let~$m'$ be the length of the next block.
By Proposition~\ref{p:Improved distortion estimate} we have the following~$2$ cases:
If $\iota(c)_i=0$, then
\begin{equation*}
\left| \log \frac{|Dg_c^m(g_c^{i}(f_c^n(c)))|}{|Dg_c(p(c))|^m} - \log
\zeta(c) \right|
\le
C_1 \exp( - \min \{m, m'\} \upsilon_1 );
\end{equation*}
and 
if $\iota(c)_i=1$, then
\begin{equation*}
\left| \log
\frac{|Dg_c^m(g_c^{i}(f_c^n(c)))|}{|Dg_c(\wtp(c))|^m} - \log
\tzeta(c) \right|
\le
C_1  \exp( - \min \{m, m'\} \upsilon_1 ). 
\end{equation*}
Applying these inequalities to each of the blocks of~$0$'s and~$1$'s in~$(\iota(c)_j)_{j = 0}^{k_0-1}$, we obtain
\begin{multline}
\label{e:block time}
 \exp\left(-C_1\sum_{j=0}^{B_c(k_0)-1}\exp(-\min\{m_j, m_{j+1}\}
\upsilon_1)\right)
\\ \le
\frac{|Dg_c^{k_0} (f_c^n(c))|}
{|Dg_c( p(c))|^{N_c(k_0)} |Dg_c(\wtp (c))|^{k_0-N_c(k_0)} \zeta(c)^{B} \tzeta(c)^{\tB}}
\\ \le
\exp\left(C_1\sum_{j=0}^{B_c(k_0)-1}\exp(-\min\{m_j, m_{j+1}\} \upsilon_1)\right).
\end{multline}
Together with~\eqref{e:blocks} this implies the desired chain of inequalities in the case where~$k_0 = k$.
If~$k_0 \le k - 1$, then by Lemma~\ref{l:distortion to central 0} we have
$$ \Delta_0^{-1}
\le
\frac{ |Dg_c^{k - k_0} (g_c^{k_0}(f_c^n(c)))|}{|Dg_c(\wtp(c))|^{k - k_0} \cdot \theta(c)^{2(N_c(k) - N_c(k_0))}}
\le
\Delta_0. $$
This, together with~\eqref{e:blocks}, \eqref{e:block time}, and~$B_c(k) = B_c(k_0) + 1$, implies the desired chain of inequalities.
The proof of the lemma is thus complete.
\endproof

\begin{lemm}
\label{lem:chicrit}
Let~$n \ge 4$ be an integer and let~$c$ be a parameter in~$\cK_n$ such that~$N_{c}(k) / k \to 0$ as~$k \to + \infty$.
Then we have
 \[
\chicrit
=
\frac{1}{3} \log |Dg_c(\wtp(c))|.
 \]
\end{lemm}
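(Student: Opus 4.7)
The plan is to reduce the computation of $\chicrit$ to the asymptotics of the derivatives $|Dg_c^k(f_c^n(c))|$, which are exactly what Lemma~\ref{l:bd} controls. Writing any integer $m \ge n$ as $m = n + 3k + r$ with $k \ge 0$ and $r \in \{0, 1, 2\}$, the chain rule gives
\begin{equation*}
|Df_c^m(c)|
=
|Df_c^n(c)| \cdot |Dg_c^k(f_c^n(c))| \cdot |Df_c^r(g_c^k(f_c^n(c)))|.
\end{equation*}
The first factor is a positive constant depending only on~$c$. Since $g_c^k(f_c^n(c))$ lies in $\Lambda_c$, and since each of the three compact sets $\Lambda_c$, $f_c(\Lambda_c)$, $f_c^2(\Lambda_c)$ is disjoint from the critical point~$0$, the third factor is bounded above and below by strictly positive constants independent of $k$ and~$r$. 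Consequently, the asymptotics of $\tfrac{1}{m}\log|Df_c^m(c)|$ are dictated entirely by those of $\tfrac{1}{3k} \log |Dg_c^k(f_c^n(c))|$.

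Next I would apply Lemma~\ref{l:bd} to rewrite
\begin{equation*}
\log |Dg_c^k(f_c^n(c))|
=
k \log |Dg_c(\wtp(c))| + 2 N_c(k) \log \theta(c) + \tfrac{B_c(k)}{2} \log(\zeta(c) \tzeta(c)) + E(k),
\end{equation*}
where, using that each exponential summand in the error estimate of Lemma~\ref{l:bd} is at most~$1$,
\begin{equation*}
|E(k)|
\le
C + C_1 \sum_{j = 0}^{B_c(k) - 1} \exp(-\min\{m_j, m_{j+1}\} \upsilon_1)
\le
C + C_1 B_c(k),
\end{equation*}
for a constant $C > 0$ depending only on~$c$ (through $\Delta_0$, $\zeta(c)$, and $\tzeta(c)$).

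Finally I would dispose of all but the leading term after dividing by~$k$. Since blocks of $0$'s and $1$'s in $\iota(c)$ alternate and every block has length at least~$1$, the number of $0$-blocks among the first $k$ symbols is at most $N_c(k)$, so $B_c(k) \le 2 N_c(k) + 1$. Combined with the hypothesis $N_c(k)/k \to 0$, this forces $N_c(k)/k$, $B_c(k)/k$, and $E(k)/k$ all to tend to~$0$, yielding $\tfrac{1}{k}\log|Dg_c^k(f_c^n(c))| \to \log|Dg_c(\wtp(c))|$. Dividing the chain-rule identity of the first paragraph by $m = n + 3k + r$ and passing to the limit $m \to + \infty$ gives $\tfrac{1}{m}\log|Df_c^m(c)| \to \tfrac{1}{3}\log|Dg_c(\wtp(c))|$, which is in fact stronger than the $\liminf$ in the definition of~$\chicrit$. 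There is no substantial obstacle; the only step requiring care is the verification that $E(k) = o(k)$, which reduces to the elementary block-count estimate $B_c(k) \le 2 N_c(k) + 1$.
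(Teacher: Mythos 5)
Your proof is essentially correct and follows the same route as the paper's: reduce the Lyapunov exponent along the critical orbit to the asymptotics of $\tfrac{1}{k}\log|Dg_c^k(f_c^n(c))|$ via the chain rule, apply a distortion estimate to peel off the leading term $k\log|Dg_c(\wtp(c))|$, and then kill the remaining terms using $B_c(k)\le 2N_c(k)+1$ together with the hypothesis $N_c(k)/k\to 0$. Your justification that the boundary factors $|Df_c^r(g_c^k(f_c^n(c)))|$ are bounded away from $0$ and $\infty$ (because $\Lambda_c$, $f_c(\Lambda_c)$, $f_c^2(\Lambda_c)$ are compact and avoid the critical point) is a correct fleshing-out of a step the paper asserts without detail.

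The one substantive point to flag is the choice of distortion estimate. You invoke Lemma~\ref{l:bd}, which is stated only for $n\ge 5$ because it depends on the functions $\zeta$, $\tzeta$ and Proposition~\ref{p:Improved distortion estimate}, all of which require $c\in\cP_5(-2)$. The lemma you are proving is stated for $n\ge 4$, so your argument as written does not cover the case $n=4$. The paper instead applies the cruder distortion bound of Lemma~\ref{l:distortion to central 0} directly, which holds for $n\ge 4$ and yields
\[
\Delta_0^{-B_c(k)} \le \frac{|Dg_c^k(f_c^n(c))|}{|Dg_c(\wtp(c))|^{k-N_c(k)}\,|Dg_c(p(c))|^{N_c(k)}} \le \Delta_0^{B_c(k)},
\]
which is precisely the right tool here: the error exponent grows like $B_c(k)$, and after dividing by $k$ it vanishes. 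Since only an $o(k)$ error estimate is needed, Lemma~\ref{l:bd} is overkill, and the extra precision it provides (the explicit $\zeta\tzeta$ factor and the exponentially small tail) does no useful work in this proof. Replacing your appeal to Lemma~\ref{l:bd} by the $\Delta_0^{\pm B_c(k)}$ estimate would both simplify the argument and recover the full range $n\ge 4$.
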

\proof
Put~$\whc \= f_c^n(c)$.
For every~$k$ in~$\N$ and every~$j$ in~$\{0,1,2\}$, we have by the chain rule
\begin{eqnarray*}
Df_c^{3k + j} (c)
& = &
Df_c^j ((f_c^{3k})(\whc)) \cdot Df_c^{3k} (\whc) \cdot Df_c^n (c)
\\ & = &
Df_c^j (g_c^{k}(\whc)) \cdot Dg_c^k (\whc) \cdot Df_c^n (c).
\end{eqnarray*}
Since $|Df_c^j ((g_c^{k})(\whc))|$ is bounded independently of~$k$ and~$j$, we have
\begin{equation}
\label{eq:intermediate chicrit}
\chicrit
= 
\liminf_{m\rightarrow +\infty} \frac{1}{m} \log |Df_c^m(c)|
=
\frac{1}{3} \liminf_{k\rightarrow +\infty}\frac{1}{k} \log |Dg_c^k (\whc)|.
\end{equation}
On the other hand, by Lemma~\ref{l:distortion to central 0}, there is a constant $\Delta_0>1$ such that 
for each integer~$k$ in $\N$,
\begin{equation*}
\Delta_0^{-B_c(k)}
\le
\frac{|Dg_c^k(\whc)|} {|Dg_c(\wtp (c))|^{k-N_c(k)} |Dg_c( p(c))|^{N_c(k)}}
\le
\Delta_0^{B_c(k)}.
\end{equation*}
Taking logarithm yields
\begin{multline*}
- B_c(k)\log \Delta_0 + N_c(k) \log \frac{|Dg_c(p(c))|}{|Dg_c(\wtp(c))|}
\\ \le
\log |Dg_c^k(\whc)| - k \log |Dg_c(\wtp(c))|
 \\ \le
B_c(k) \log \Delta_0 + N_c(k) \log \frac{|Dg_c(p(c))|}{|Dg_c(\wtp(c))|}.
\end{multline*}
Since for each~$k$ in~$\N$ we have $B_c(k) \le 2 N_c(k) + 1$, using the hypothesis that 
$N_c(k)/k \to 0$ as $k\to +\infty$, we conclude
$$ \lim_{k\rightarrow +\infty}\frac{1}{k} \log |Dg_c^k (\whc)|
=
\log |Dg_c(\wtp(c))|. $$
Combined with~\eqref{eq:intermediate chicrit}, this completes the proof of the lemma.
\endproof

\begin{lemm}[\cite{CorRiva}, Lemma~$3.6$]
\label{l:landing to central derivative}
There is a constant~$\Delta_1 > 1$, such that for each parameter~$c$
in~$\cP_2(-2)$, each integer~$k \ge 2$, and each point~$y$ in~$P_{c, k}(- \beta(c))$, we have
$$ \Delta_1^{-1} |Df_c(\beta(c))|^k \le |Df_c^k (y)|
\le
\Delta_1 |Df_c(\beta(c))|^k. $$
\end{lemm}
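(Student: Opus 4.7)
My plan rests on the dynamics near the repelling fixed point $\beta(c)$. First I would observe, using the description of depth-$0$ and depth-$1$ puzzle pieces recalled in \S\ref{ss:puzzles}, that the closure of $P_{c,1}(\beta(c))$ is contained in $P_{c,0}(\beta(c))$, and that $f_c$ maps $P_{c,1}(\beta(c))$ biholomorphically onto $P_{c,0}(\beta(c))$ (both pieces contain the fixed point $\beta(c)$). Therefore the inverse branch $h_c \= (f_c|_{P_{c,1}(\beta(c))})^{-1} : P_{c,0}(\beta(c)) \to P_{c,1}(\beta(c))$ is well defined, fixes $\beta(c)$, and, by the Schwarz--Pick lemma, is a strict contraction for the Poincar\'e metric of $P_{c,0}(\beta(c))$ by a factor~$\rho_c < 1$.

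Second, iterating yields $h_c^{j}(P_{c,0}(\beta(c))) = P_{c,j}(\beta(c))$, so the Poincar\'e, and hence Euclidean, diameters of the pieces $P_{c,j}(\beta(c))$ decay geometrically: there are constants $C > 0$ and $\rho < 1$, uniform in~$c$ on the closure of $\cP_2(-2)$, such that every $z \in P_{c,j}(\beta(c))$ satisfies $|z - \beta(c)| \le C \rho^{j}$. Since $f_c$ is a local biholomorphism at $-\beta(c)$ with $|Df_c(-\beta(c))| = |Df_c(\beta(c))|$ and $P_{c,2}(-\beta(c))$ is compactly contained in $P_{c,1}(-\beta(c))$, the hypothesis $k \ge 2$ gives in addition a bound $|y + \beta(c)| \le C\rho^{k-1}$ for every $y \in P_{c,k}(-\beta(c))$.

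Third, I would conclude by a telescoping argument. For $y \in P_{c,k}(-\beta(c))$ and $j \in \{1, \ldots, k - 1\}$, the iterate $f_c^j(y)$ lies in $P_{c,k-j}(\beta(c))$, so $|f_c^j(y) - \beta(c)| \le C\rho^{k-j}$. Combining this with the first-order estimate $|\log|Df_c(z)| - \log|Df_c(\pm\beta(c))|| \le L\,|z \mp \beta(c)|$, valid on a fixed neighborhood of $\pm\beta(c)$ with $L$ uniform in~$c$, gives
\[
\left| \log |Df_c^k(y)| - k \log |Df_c(\beta(c))| \right|
\;\le\;
L \sum_{j = 0}^{k-1} |f_c^j(y) - (\pm \beta(c))|
\;\le\;
\frac{L(C + C') \rho}{1 - \rho},
\]
and exponentiating yields the desired constant~$\Delta_1$. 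The main obstacle is ensuring uniformity of the constants $\rho$, $C$, and~$L$ for $c \in \cP_2(-2)$; I would resolve it by noting that the closure of $\cP_2(-2)$ is compactly contained in $\cP_1(-2)$ and that puzzle pieces and the inverse branch $h_c$ vary continuously with~$c$ there, so all relevant suprema are finite.
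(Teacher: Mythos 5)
Your proof is correct, and it uses the standard mechanism for establishing bounded distortion along the orbit of a point falling into the linearization domain of the repelling fixed point~$\beta(c)$. Since the paper cites~\cite[Lemma~3.6]{CorRiva} without reproducing a proof, there is no argument here to compare against literally; but the route you take — the inverse branch~$h_c$ on the nested pair $\overline{P_{c,1}(\beta(c))}\subset P_{c,0}(\beta(c))$ is a uniform Poincar\'e contraction, hence the pieces $P_{c,j}(\beta(c))$ shrink geometrically, and the Lipschitz bound on $\log|Df_c|$ near $\pm\beta(c)$ telescopes to a uniform total — is exactly the method the present paper itself employs in the proof of Lemma~\ref{l:contractions} for the analogous statement on the Cantor set $\Lambda_c$, so it is certainly in the spirit of the source.

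Two small points worth tightening, though neither is a real gap. First, the Poincar\'e diameter of $P_{c,0}(\beta(c))$ in its own metric is infinite, so the geometric decay should be measured starting from $P_{c,1}(\beta(c))$: one has $\operatorname{diam}_{P_{c,0}(\beta(c))}(P_{c,j}(\beta(c)))\le \rho_c^{\,j-1}\operatorname{diam}_{P_{c,0}(\beta(c))}(P_{c,1}(\beta(c)))$ for $j\ge 1$, and the comparison with the Euclidean metric is legitimate precisely because the whole forward orbit of $h_c$ stays in the compactly contained piece $P_{c,1}(\beta(c))$. Since in your telescoping sum the relevant pieces are $P_{c,k-j}(\beta(c))$ with $1\le k-j\le k-1$, this reindexing costs nothing. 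Second, the bound $|y+\beta(c)|\le C\rho^{k-1}$ is most cleanly justified by writing $f_c(y)-\beta(c)=(y-\beta(c))(y+\beta(c))$ and noting that $|y-\beta(c)|$ is bounded away from zero uniformly for $y\in \overline{P_{c,1}(-\beta(c))}$ and $c\in\overline{\cP_2(-2)}$; the phrasing via a ``bi-Lipschitz local biholomorphism'' is correct but hides the elementary computation. The uniformity argument over $\overline{\cP_2(-2)}\subset\cP_1(-2)$ is sound, resting as it does on part~1 of Lemma~\ref{l:auxiliary para-puzzle pieces} and the continuous dependence of the low-depth puzzle pieces on the parameter.
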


\proof[Proof of Proposition~\ref{p:2 variables series}]
Let~$\Delta_0$ be the constant given by Lemma~\ref{l:distortion to central 0}, let~$C_1$ and~$\upsilon_1$ be the constants given by Proposition~\ref{p:Improved distortion estimate}, and let~$\Delta_1$ be the constant given by Lemma~\ref{l:landing to central derivative}.
Note that by Proposition~\ref{p:Improved distortion estimate} and part~$1$ of Lemma~\ref{l:auxiliary para-puzzle pieces},
\[
\Delta
\=
\sup_{c\in \cP_6(-2)} \max \left\{ \frac{\zeta(c)}{\tzeta(c)}, \frac{\tzeta(c)}{\zeta(c)} \right\}
<
+ \infty.
\]

Let~$n$, $c$, and~$(m_j)_{j = 0}^{+ \infty}$ be as in the statement of the proposition, and put
\[
 \sigma \=  C_1\sum_{j=0}^{+\infty} \exp(-\min\{m_j,m_{j+1}\} \upsilon_1)
\text{ and }
\hC_2\= \Delta_0 \Delta_1 \Delta^{1/2} \exp(\sigma).
\]
Then for every~$k$ in~$\N$ and every~$t > 0$, we have, using
\[
Df_c^{n+3k}(c) = Dg_c^k (f_c^n(c)) \cdot Df_c^n (c)
\]
and combining Lemmas~\ref{l:bd} and~\ref{l:landing to central derivative},
\begin{multline}
\label{e:postcritical derivative}
\hC_2^{-t} \theta(c)^{- 2t N_c(k)} (\zeta(c)\tzeta(c))^{- tB_c(k)/2}
 \\ \le
\frac{|Df_c^{n+3k}(c)|^{-t}}{|Dg_c(\wtp(c))|^{-tk} |Df_c(\beta(c))|^{-tn}}
\\ \le
\hC_2^{t} \theta(c)^{- 2t N_c(k)} (\zeta(c)\tzeta(c))^{- tB_c(k)/2}.
\end{multline}
Since by Lemma~\ref{lem:chicrit} we have
\begin{equation*}
\exp((n+3k)t\chicrit)
=
\exp(nt\chicrit) |Dg_c(\wtp(c))|^{tk},
\end{equation*}
if we multiply each term in the chain of inequalities~\eqref{e:postcritical derivative} by
$$ \left( \frac{\exp(\chicrit)}{|Df_c(\beta(c))|} \right )^{ t n}, $$
then we get
\begin{multline*}
\hC_2^{-t} \left( \frac{\exp(\chicrit)}{|Df_c(\beta(c))|} \right )^{ t n} \theta(c)^{-2t N_c(k)} (\zeta(c)\tzeta(c))^{- tB_c(k)/2} 
\\ \le
\exp((n+3k)t\chicrit) |Df_c^{n+3k}(c)|^{-t}
\\ \le
\hC_2^{t} \left( \frac{\exp(\chicrit)}{|Df_c(\beta(c))|} \right )^{ t n} \theta(c)^{-2t N_c(k)} (\zeta(c)\tzeta(c))^{- tB_c(k)/2}.
\end{multline*}
Taking square roots and then by multiplying by~$\exp(-(n + 3k) \delta)$ in each of the terms of the chain of inequalities above, we obtain
\begin{multline*}
\hC_2^{-t/2} \exp(- n \delta) \left( \frac{\exp(\chicrit)}{|Df_c(\beta(c))|} \right )^{\frac{t}{2} n}
\exp(- 3k \delta)  \theta(c)^{-t N_c(k)} (\zeta(c)\tzeta(c))^{- tB_c(k)/4}
\\ \le
\exp \left(- (n+3k)\left(- t \frac{\chicrit}{2} + \delta\right)\right) |Df_c^{n+3k}(c)|^{-\frac{t}{2}}
\\ \le
\hC_2^{t/2}\exp(- n \delta) \left( \frac{\exp(\chicrit)}{|Df_c(\beta(c))|} \right )
^{\frac{t}{2} n} \exp(- 3k \delta)  \theta(c)^{-t N_c(k)}
(\zeta(c)\tzeta(c))^{- tB_c(k)/4}.
\end{multline*}
Note that when~$k = 0$ this chain of inequalities holds by Lemma~\ref{l:distortion to central 0} and our definition of~$\hC_2$.
Summing over~$k\ge 0$, we obtain the proposition with~$C_2 = \hC_2^{1/2}$.
\endproof

\section{Estimating the~$2$ variables series}
\label{s:reduction}
This section is dedicated to estimate, in an abstract setting, the~$2$ variables series defined in~\S\ref{ss:2 variables series} for a certain itinerary defined in~\S\ref{ss:the itinerary}.
Our main estimate is stated as Proposition~\ref{p:estimating 2 variables series} in~\S\ref{ss:estimating 2 variables series}.
\subsection{The itinerary}
\label{ss:the itinerary}
Given an integer~$\Xi$, let~$q \ge 3$ be a sufficiently large integer such that 
$q + \Xi \ge 1$ and~$2^{q - 1} \ge q + 1 + \Xi$.
Define the quadratic function
$$ \begin{array}{cccl}
 Q : & \R & \to & \R \\
       &  s  & \mapsto & Q(s) \= q s^2
\end{array} $$
and for each real number~$s$ in~$[0,+\infty)$ define the following intervals of~$\R$:
\[
I_s
\=
\left[ 2^{Q(s)}, 2^{Q(s)} +Q(s+1) - Q(s) + \Xi \right)
\]
and
\[
J_s
\=
\left[ 2^{Q(s)} +Q(s+1) - Q(s) + \Xi, 2^{Q(s+1)} \right).
\]
Denote by~$(x_j)_{j = 0}^{+ \infty}$ the sequence in~$\{ 0, 1 \}^{\N_0}$ defined by the property that~$x_j = 0$ if and only if there is an integer~$s \ge 0$ such that~$j + 1$ is in~$I_s$.
Note that the first~$|I_0| = q + \Xi$ entries of~$(x_j)_{j = 0}^{+ \infty}$ are equal to~$0$.
Moreover, define the function $N : \N_0 \to \N_0$, by $N(0) \= 0$, and for~$k$ in~$\N$ by
\[
N(k)
\=
\sharp \left\{ j \in \{0, \ldots, k-1 \} \mid x_j = 0 \right\},
\]
and the function~$B : \N_0 \to \N_0$ by $B(0) \= 0, B(1) \= 1$, and for~$k \ge 2$ by
\[
B(k)
\=
1 + \sharp \left\{ j \in \{ 0, \ldots, k - 2 \} \mid x_j \neq x_{j + 1}
\right\}.
\]
Note that for~$k \ge 1$ the number~$B(k)$ is equal to the number of blocks of~$0$'s and~$1$'s in the
sequence~$(x_j)_{j = 0}^{k-1}$.

Observe that for every~$s$ in~$\N_0$ and every~$k$ in~$J_s$, we have
\begin{equation}
\label{eq:N J}
N(k)
=
\sum_{j = 0}^s |I_j|
=
\sum_{j = 0}^s (Q(j + 1) - Q(j) + \Xi)
 =
Q(s+1) + \Xi \cdot (s+1)
\end{equation}
 and
\begin{equation}
\label{eq:B J}
B(k) = 2(s + 1).
\end{equation}
On the other hand, for each~$s$ in~$\N_0$ and~$k$ in~$I_s$, we have
\begin{equation}
\label{eq:N I}
N(k) = k - (2^{Q(s)} - 1) + Q(s) + \Xi s
\end{equation}
 and
\begin{equation}
\label{eq:B I}
B(k) = 2s + 1.
\end{equation}

\begin{lemm}
\label{l:the itinerary}
The the following properties hold for each real number~$s \ge 0$:
\begin{enumerate}[(a)]
 \item
$2^{Q(s)} + Q(s+1) + \Xi \le 2^{Q(s+1)-1}$.
 \item
$|J_s| \ge 2^{Q(s+1)-1}$.
\end{enumerate}
\end{lemm}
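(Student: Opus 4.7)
First, I note that (b) is a formal consequence of (a). Indeed, directly from the definition of $J_s$ one has
$$|J_s| = 2^{Q(s+1)} - 2^{Q(s)} - \bigl(Q(s+1) - Q(s) + \Xi\bigr),$$
so (b) is equivalent to $2^{Q(s+1)-1} \ge 2^{Q(s)} + Q(s+1) - Q(s) + \Xi$; because $-Q(s) \le 0$, this is weaker than~(a).

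The main work is (a). My plan is to factor the dominant difference using
$$2^{Q(s+1)-1} - 2^{Q(s)} = 2^{Q(s)}\bigl(2^{q(2s+1)-1} - 1\bigr),$$
which follows from $Q(s+1) - Q(s) = q(2s+1)$. Since $Q(s) \ge 0$ gives $2^{Q(s)} \ge 1$, and $q \ge 3$ gives $2^{q(2s+1)-1} - 1 > 0$, (a) is implied by the single-variable inequality
$$h(s) \= 2^{q(2s+1)-1} - 1 - q(s+1)^2 - \Xi \ge 0 \qquad (s \ge 0).$$

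To prove $h(s) \ge 0$ I proceed by elementary calculus. The value $h(0) = 2^{q-1} - 1 - q - \Xi$ is $\ge 0$ exactly by the standing hypothesis $2^{q-1} \ge q + 1 + \Xi$. Differentiating,
$$h'(s) = 2q\bigl[(\log 2)\cdot 2^{q(2s+1)-1} - (s+1)\bigr],$$
and I will verify that the bracket is non-negative on $[0,\infty)$ by the same monotone-at-zero / monotone-derivative argument: its value at $s=0$ equals $(\log 2) 2^{q-1} - 1$, which is already positive for $q \ge 2$, and its derivative $2q(\log 2)^2 2^{q(2s+1)-1} - 1$ is plainly positive on $[0,\infty)$ for any $q \ge 2$. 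Hence $h$ is non-decreasing and $h(s) \ge h(0) \ge 0$.

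The only real pitfall is the choice of reduction. Several natural but coarser bounds (for instance, replacing the growing factor $2^{q(2s+1)-1}-1$ by its crude lower estimate $2^{q-1}-1 \ge q + \Xi$ coming directly from the hypothesis) are too lossy to absorb the quadratic term $q(s+1)^2$ for intermediate values of $s$. The factorization above is the one that keeps the base-case inequality at $s = 0$ tight while still leaving enough super-exponential room to close the derivative estimate uniformly on $[0,\infty)$.
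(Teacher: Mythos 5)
Your proof is correct and follows essentially the same strategy as the paper: verify the inequality at $s=0$ from the standing hypothesis $2^{q-1}\ge q+1+\Xi$, then show the relevant function is nondecreasing on $[0,+\infty)$ by a derivative argument, and deduce (b) from (a) since $Q(s)\ge 0$. The only difference is a minor algebraic streamlining: you first factor $2^{Q(s+1)-1}-2^{Q(s)}=2^{Q(s)}\bigl(2^{q(2s+1)-1}-1\bigr)$ and discard the helpful factor $2^{Q(s)}\ge 1$, reducing to the cleaner single-variable function $h(s)=2^{q(2s+1)-1}-1-q(s+1)^2-\Xi$, whereas the paper differentiates $s\mapsto 2^{Q(s+1)-1}-(2^{Q(s)}+Q(s+1)+\Xi)$ directly. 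Your reduction makes the two-stage calculus check slightly tidier to write out, at no loss of generality.
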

\begin{proof}
Part~$(a)$ with~$s = 0$ is given by our hypothesis~$2^{q - 1} \ge q + 1 + \Xi$.
For~$s > 0$, it follows from this and from the fact that the derivative of the function
$$ s \mapsto 2^{Q(s + 1) - 1} - (2^{Q(s)} + Q(s + 1) + \Xi) $$
is strictly positive on~$[0, + \infty)$.
Part~$(b)$ follows easily from part~$(a)$.
\end{proof}
\subsection{Estimates}
\label{ss:estimating 2 variables series}
Let~$\Xi$ be a given integer and let~$q$, $N$ and~$B$ be as in the previous subsection.
Given a real number~$\xi$ such that~$1 \le \Xi - 2 \xi \le 2$, define the~$2$
variables series~$\Pi$ on $[0,+\infty) \times [0,+\infty)$, by
\[
 \Pi(\tau,\lambda)\= \sum_{k = 0}^{+\infty} 2^{- \lambda k -\tau N(k) + \tau
\xi B(k)}.
\]
This subsection is dedicated to prove the following proposition.
\begin{propalph}
\label{p:estimating 2 variables series}
For every~$\tau \ge 1$ we have,
\[
\Pi(\tau,0) \le 2 \left(2^{\tau\xi} + 1\right).
\]
Furthermore, for each~$\tau$ in~$\left( \tfrac{q-2}{q-1}, 1 \right)$ we have
\[
 \Pi \left( \tau , 2 \cdot 2^{- q \left( \frac{\Xi - 2\xi}{q(1 - \tau)} - 1
\right)^2} \right)
\le
10 \cdot 2^{\tau \xi} + 101,
\]
and for each~$\Delta \ge 1$ we have
\[
2^{\Delta - 4}
\le
\Pi \left(\tau , 2^{- q \left( \frac{\Xi - 2\xi}{q(1 - \tau)} + \Delta
\right)^2} \right).
\]
\end{propalph}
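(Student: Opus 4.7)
The plan is a block-by-block evaluation of $\Pi(\tau,\lambda)$ exploiting the structure introduced in~\S\ref{ss:the itinerary}. Setting $a := (\Xi-2\xi)/(q(1-\tau))$, so that $\Xi-2\xi = q(1-\tau)\,a$, I write
\[
\Pi(\tau,\lambda)
=
1 + \sum_{s=0}^{\infty}\bigl(\Sigma_{I_s} + \Sigma_{J_s}\bigr),
\]
where $\Sigma_{I_s}$ and $\Sigma_{J_s}$ are the partial sums over $k \in I_s$ and $k \in J_s$. By~\eqref{eq:N J} and~\eqref{eq:B J}, $N$ and $B$ are constant on $J_s$, so
\[
\Sigma_{J_s}
=
2^{-\tau Q(s+1) - \tau(s+1)(\Xi-2\xi)}\sum_{k\in J_s}2^{-\lambda k},
\]
and the leading exponent rewrites as $q(1-\tau)(s+1)[(s+1)-\tau a]$. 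By~\eqref{eq:N I} and~\eqref{eq:B I}, $\Sigma_{I_s}$ is a geometric series of ratio $2^{-(\lambda+\tau)}$ with leading factor $2^{-\lambda\cdot 2^{Q(s)}-\tau(1+Q(s)+s(\Xi-2\xi))+\tau\xi}$.

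For the first estimate ($\tau\ge 1$, $\lambda=0$), the nonpositivity of $(1-\tau)Q(s+1)$ together with $\Xi-2\xi\ge 1$ and $|J_s|\le 2^{Q(s+1)}$ give $\Sigma_{J_s}\le 2^{-\tau(s+1)}$, which sums to at most~$1$. The ratio $2^{-\tau}\le 1/2$ bounds the geometric factor in $\Sigma_{I_s}$ by~$2$, and $Q(s)\ge 0$ together with $\Xi-2\xi\ge 1$ yields $\Sigma_{I_s}\le 2^{\tau\xi-\tau s}$, summing to at most $2\cdot 2^{\tau\xi}$. Adding the $k=0$ term of~$1$ gives $\Pi(\tau,0)\le 2(1+2^{\tau\xi})$.

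For the second upper bound ($\tau\in(\tfrac{q-2}{q-1},1)$ and $\lambda^+:=2\cdot 2^{-q(a-1)^2}$), the $s$-exponent in $\Sigma_{J_s}$ now grows in $s$, and the compensation comes from $2^{-\lambda^+\cdot 2^{Q(s)}} = 2^{-2\cdot 2^{q(s^2-(a-1)^2)}}$, which becomes super-exponentially small once $s>a-1$. I split the $s$-sum at $s_\ast:=\lceil a\rceil$. For $s<s_\ast$, I use $\sum_{k\in J_s}2^{-\lambda^+ k}\le |J_s|\le 2^{Q(s+1)}$ and rewrite the quadratic $g(u)=q(1-\tau)u(u-\tau a)$ in $u=s+1$ in vertex form $c(u-\tau a/2)^2 - c\tau^2 a^2/4$ with $c=q(1-\tau)$. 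Since $\tau(\Xi-2\xi)\ge\tau>1/2$ (as $q\ge 3$), the vertex value together with the roughly Gaussian growth of the terms away from the vertex forces all but $O(1)$ boundary contributions to be negligible, bounding $\sum_{s<s_\ast}\Sigma_{J_s}$ by an absolute constant. For $s\ge s_\ast$ the double-exponential factor crushes the tail to a rapidly convergent series. The $\Sigma_{I_s}$ are handled analogously, with $Q(s)=qs^2$ giving even faster decay; the explicit constants $10$ and $101$ are tuned to accommodate these contributions.

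For the lower bound it suffices to exhibit a single $s_0$ for which $\Sigma_{J_{s_0}}\ge 2^{\Delta-4}$. With $\lambda^-:=2^{-q(a+\Delta)^2}$, take $s_0:=\lfloor a+\Delta\rfloor-1$, so that $s_0+1\le a+\Delta$. Then $\lambda^-\cdot 2^{Q(s_0+1)}\le 1$, whence $2^{-\lambda^- k}\ge 1/2$ for every $k\in J_{s_0}$, and part~(b) of Lemma~\ref{l:the itinerary} gives $\sum_{k\in J_{s_0}}2^{-\lambda^- k}\ge |J_{s_0}|/2\ge 2^{Q(s_0+1)-2}$. Using $qa(1-\tau)=\Xi-2\xi\ge 1$ and $(s_0+1)-\tau a\ge a(1-\tau)+\Delta-1$, the exponent $q(1-\tau)(s_0+1)[(s_0+1)-\tau a]-2$ is bounded below by $\Delta-4$ uniformly for $\Delta\ge 1$. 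The main obstacle is the second upper bound: controlling the transitional range $s\sim a$ where exponential growth from $(1-\tau)Q(s+1)$ and double-exponential decay from $\lambda^+\cdot 2^{Q(s)}$ trade dominance, and keeping the accumulated $O(1)$ contributions within the explicit absolute constants of the statement.
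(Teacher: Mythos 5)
Your decomposition into the partial sums $\Sigma_{I_s}$ and $\Sigma_{J_s}$, and the key computations for the first and third estimates, essentially replicate the paper's Lemmas~\ref{lem:11} and~\ref{lem:22}. In the lower bound you bypass the paper's intermediate quantity $\lambda(s)=|J_s|^{-1}$ and the monotonicity of $\Pi(\tau,\cdot)$, picking the block $J_{s_0}$ directly; this is a harmless simplification, and your bound $F(s_0+1)\ge \Delta-2$ is correct (indeed one gets $F(s_0+1)>\Delta-1$ using $\Xi-2\xi\ge 1$).

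The genuine gap is the second estimate, and you flag it yourself as ``the main obstacle.'' What is missing is precisely what the paper's proof supplies. First, the ``roughly Gaussian'' decay near the vertex is made quantitative via the convexity of $F(\ell)=(1-\tau)Q(\ell)-\tau(\Xi-2\xi)\ell$: using $F(0)=0$, $F(s(\tau)/2)=\tfrac{(\Xi-2\xi)^2}{2q}-\tfrac{\Xi-2\xi}{2}\cdot\tfrac{s(\tau)}{2}$, and $F(s(\tau))=\tfrac{(\Xi-2\xi)^2}{q}$, one gets
\[
F(\ell)\le \frac{(\Xi-2\xi)^2}{q}-\frac{\Xi-2\xi}{2}\min\{\ell,\,s(\tau)-\ell\}\qquad\text{for }\ell\in[0,s(\tau)],
\]
and then $1\le\Xi-2\xi\le 2$, $q\ge 3$ give $\sum_{j\le\lfloor s^+\rfloor+1}2^{F(j+1)}\le 20$. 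Your ``vertex form'' observation is the right seed, but without this explicit piecewise-linear majorant you cannot extract the absolute constant. Second, the tail $j$ large is \emph{not} controlled merely by the double-exponential smallness of $2^{-\lambda^+ 2^{Q(j)}}$; the paper instead bounds $\sum_{k\in J_j}2^{-\lambda(s)k}$ by $2|J_s|$ (geometric series with ratio $2^{-\lambda(s)}$, independent of $j$) and then uses the hypothesis $\tau>\tfrac{q-2}{q-1}$ — equivalently $\tau>\tfrac{Q(s^++1)-1}{Q(s^++2)}$ for $s^+ = s(\tau)-2$ — to make $Q(s^++1)-\tau Q(j+1)\le 1$ for $j\ge\lfloor s^+\rfloor+2$, so that the tail of $\sum_j J_j$ is dominated by a plain geometric series in $j$. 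Your sketch neither identifies this monotonicity mechanism nor uses the hypothesis on $\tau$, so the bound on the transitional range and on the tail is not actually established, and the constants $10\cdot 2^{\tau\xi}+101$ remain unjustified.
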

The proof of this proposition is at the end of this subsection.

For every real number~$s$ in~$[0,+\infty)$, define
\[
\lambda(s) \= \frac{1}{|J_s|}.
\]
By part~(b) of Lemma~\ref{l:the itinerary} and the hypothesis~$q \ge 3$, we have $0 < \lambda(s) \le 1/4$.
\begin{lemm}\label{lem:11}
The following properties hold:
\begin{enumerate}[1.]
\item
For $\tau \ge 1$ we have
\[
\Pi(\tau,0) \le 2 \left( 2^{\tau\xi} + 1 \right).
\]
\item
For every real number~$s$ in~$[0,+\infty)$ and every~$\tau$ in~$(1/2,1)$
satisfying~$\tau > \frac{Q(s+1)-1}{Q(s+2)}$, we have
\begin{equation*}
\Pi(\tau,\lambda(s))
\\ \le
1 + 10 \cdot 2^{\tau\xi}
+ 5 \sum_{j=0}^{\lfloor s \rfloor + 1}  2^{(1-\tau)Q(j+1) -\tau (\Xi - 2\xi)
(j+1) }.
\end{equation*}
\item
For every real number~$s$ in~$[0,+\infty)$ and every~$\tau > 0$, we have
\[
\frac{1}{8} 2^{(1-\tau)Q(\lfloor s \rfloor + 1)-\tau(\Xi - 2\xi)(\lfloor s
\rfloor + 1)}
\le
\Pi(\tau,\lambda(s)).
\]
\end{enumerate}
\end{lemm}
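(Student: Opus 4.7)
The plan is to decompose
\[
\Pi(\tau,\lambda)
=
1
+
\sum_{s=0}^{+\infty}
\left(
\sum_{k\in I_s} 2^{-\lambda k-\tau N(k)+\tau\xi B(k)}
+
\sum_{k\in J_s} 2^{-\lambda k-\tau N(k)+\tau\xi B(k)}
\right)
\]
and use the closed-form expressions for $N$ and $B$ on $I_s$ and $J_s$. On each $J_s$ both $N$ and $B$ are constant, so the inner sum factors as $2^{-\tau Q(s+1)-\tau(\Xi-2\xi)(s+1)}\cdot\sum_{k\in J_s}2^{-\lambda k}$. On each $I_s$, $B(k)=2s+1$ is constant while $N(k)$ is affine in $k$, so after factoring an $s$-dependent exponential prefactor, the inner sum becomes a geometric series in $k$ of ratio $2^{-(\lambda+\tau)}$.

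For part~1, with $\tau\ge 1$ and $\lambda=0$, the $I_s$-geometric sum is bounded by $1/(1-2^{-\tau})\le 2$, and the resulting sum over $s$ reduces to $\sum_{s}2^{-\tau(\Xi-2\xi)s}$, convergent by the hypothesis $\Xi-2\xi\ge 1$. For $J_s$, using $|J_s|\le 2^{Q(s+1)}$ together with $(1-\tau)Q(s+1)\le 0$ yields the geometric bound $\sum_{s}2^{-\tau(\Xi-2\xi)(s+1)}\le 1$. Adding the $k=0$ term produces $1+2\cdot 2^{\tau\xi}+1$, the claimed bound.

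For part~2, with $\lambda=\lambda(s)=1/|J_s|$ and $\tau\in(1/2,1)$, I split the outer sum at $j=\lfloor s\rfloor+1$. The contributions from all $I_j$, $j\ge 0$, assemble into a geometric series in $j$ of the form $\sum_{j}2^{-\tau(\Xi-2\xi)j}$ times the bounded factor $1/(1-2^{-\tau})$ and a factor of $2^{\tau\xi}$, yielding an absolute constant multiple of $2^{\tau\xi}$ that is absorbed into the $10\cdot 2^{\tau\xi}$ term. The $J_j$ pieces for $j\le\lfloor s\rfloor+1$, estimated by $\sum_{k\in J_j}2^{-\lambda k}\le|J_j|\le 2^{Q(j+1)}$, produce exactly the explicit sum on the right. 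For $j\ge\lfloor s\rfloor+2$, the support of $k$ begins at $2^{Q(j)}$, so on this support $\lambda k\ge 2^{Q(j)-Q(s+1)}$; combined with the hypothesis $\tau>(Q(s+1)-1)/Q(s+2)$, this forces the tail in $j$ to collapse to an absolute constant absorbed into the $101$.

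For part~3, I retain only the $J_{\lfloor s\rfloor}$ block. By Lemma~\ref{l:the itinerary}(b), $|J_{\lfloor s\rfloor}|\ge 2^{Q(\lfloor s\rfloor+1)-1}$, and by Lemma~\ref{l:the itinerary}(a), every $k\in J_{\lfloor s\rfloor}$ satisfies $k<2^{Q(\lfloor s\rfloor+1)}\le 2|J_s|$, hence $\lambda(s)k\le 2$ and $2^{-\lambda(s)k}\ge 1/4$. Combining these with the closed-form exponent on $J_{\lfloor s\rfloor}$ yields the desired $\tfrac{1}{8}\cdot 2^{(1-\tau)Q(\lfloor s\rfloor+1)-\tau(\Xi-2\xi)(\lfloor s\rfloor+1)}$. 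The main obstacle is the tail estimate in part~2: matching the $2^{-\lambda k}$ decay against the growth of $|J_j|=O(2^{Q(j+1)})$ for $j\ge\lfloor s\rfloor+2$ requires a careful accounting of how the hypothesis on $\tau$ converts into summability, so that the remainder is truly bounded uniformly in $s$, $q$, and $\tau\in(\tfrac{q-2}{q-1},1)$ rather than by a constant depending on those parameters.
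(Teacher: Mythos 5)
Your decomposition into the $I_s$ and $J_s$ blocks, and the use of the closed forms \eqref{eq:N I}, \eqref{eq:B I}, \eqref{eq:N J}, \eqref{eq:B J}, matches the paper. Part~1 is correct and follows the paper's argument. Part~3 is correct and in fact slightly cleaner than the paper's: you get $\lambda(s)k\le 2$ directly from $k<2^{Q(\lfloor s\rfloor+1)}\le 2^{Q(s+1)}\le 2|J_s|$, rather than invoking Lemma~\ref{l:the itinerary}(a) to bound $\lambda(s)$ times the left endpoint of $J_{\lfloor s\rfloor}$; both give $2^{-\lambda(s)k}\ge 1/4$ on $J_{\lfloor s\rfloor}$ and then $|J_{\lfloor s\rfloor}|\ge 2^{Q(\lfloor s\rfloor+1)-1}$ finishes.

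Part~2 has a genuine gap, precisely at the tail $j\ge\lfloor s\rfloor+2$, and the mechanism you sketch does not close it. Writing $s_0=\lfloor s\rfloor$ and using $|J_j|\le 2^{Q(j+1)}$ together with the lower bound $\lambda(s)\,2^{Q(j)}\ge 2^{Q(j)-Q(s+1)}$ gives
\[
J_j(\tau,\lambda(s))\le 2^{(1-\tau)Q(j+1)-\tau(\Xi-2\xi)(j+1)}\cdot 2^{-2^{Q(j)-Q(s+1)}},
\]
and the issue is that the double-exponential factor does not uniformly dominate the first one. For $j=s_0+2$ with $s$ close to $s_0+1$, one has $Q(s_0+2)-Q(s+1)\to 0^+$, so $2^{-2^{Q(s_0+2)-Q(s+1)}}$ is merely bounded below by $1/2$, while $(1-\tau)Q(s_0+3)$ can be as large as roughly $4q(2s+3)$ under the constraint $\tau>(Q(s+1)-1)/Q(s+2)$; the resulting bound on $J_{s_0+2}$ grows without bound in $s$. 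What the paper uses instead is the $j$-independent bound $\sum_{k\in J_j}2^{-\lambda(s)k}\le\sum_{m\ge 1}2^{-\lambda(s)m}\le 2|J_s|\le 2\cdot 2^{Q(s+1)}$ (inequality~\eqref{eq:geometrica2}), which does not see $|J_j|$ at all. Then the hypothesis on $\tau$ is used as $Q(j+1)\ge Q(s+2)$ for $j\ge s_0+2$, so $Q(s+1)-\tau Q(j+1)\le Q(s+1)-\tau Q(s+2)\le 1$, and the tail reduces to $\sum_{j\ge s_0+2}4\cdot 2^{-\tau(\Xi-2\xi)(j+1)}$, a plain geometric series bounded by an absolute constant. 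You should replace your $|J_j|\cdot 2^{-\lambda(s)2^{Q(j)}}$ estimate by this $2|J_s|$ bound and the inequality $Q(s+1)-\tau Q(j+1)\le 1$; without it, the claim that the tail is ``absorbed into the $101$'' is not substantiated.
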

\proof
For $\tau > 0$, $\lambda\ge 0$, and $s$ in $\N_0$, define
\begin{equation*}
\label{e:I's}
I_s(\tau, \lambda)
\=
 \sum_{k\in  I_s} 2^{- \lambda k -\tau N(k) +\tau \xi
B(k)}
\end{equation*}
and
\begin{equation*}
\label{e:J's}
J_s(\tau, \lambda)
\=
\sum_{k\in J_s} 2^{- \lambda k -\tau N(k) +\tau \xi B(k)},
\end{equation*}
so that~$\Pi(\tau,\lambda) = 1 + \sum_{s=0}^{+\infty} I_s(\tau, \lambda) +
\sum_{s=0}^{+\infty} J_s(\tau, \lambda)$.

\partn{1}
To prove part~$1$, note that by~\eqref{eq:N I}, \eqref{eq:B I}, and the hypothesis~$\Xi - 2 \xi \ge 1$, for every~$\tau > 0$ and every~$\lambda \ge 0$ we have
\begin{equation}
\label{eq:Is+1}
\begin{split}
\sum_{s=0}^{+\infty} I_s(\tau, \lambda)
& \le
\sum_{s=0}^{+\infty}  \sum_{m = 1}^{|I_s|} 2^{-\tau(Q(s) + \Xi s + m) +
\tau \xi \cdot (2s + 1)}
\\ & =
2^{\tau\xi} \sum_{s=0}^{+\infty} 2^{-\tau(Q(s) + (\Xi - 2 \xi)s) }
\sum_{m = 1}^{|I_{s}|} 2^{-\tau m}
\\ & \le
2^{\tau\xi} \frac{2^{-\tau}}{1-2^{-\tau}} \sum_{s=0}^{+\infty} 2^{-\tau (\Xi - 2\xi)s}
\\ & \le
2^{\tau\xi} \frac{ 2^{-\tau}}{\left(1-2^{-\tau}\right)^2}.
\end{split}
\end{equation}
On the other hand, using~\eqref{eq:N J}, \eqref{eq:B J}, the hypothesis~$\Xi - 2\xi \ge 1$, and that for every $s \ge 0$ we have $|J_s|\le 2^{Q(s + 1)}$, we obtain for every~$\tau \ge 1$
\begin{equation}\label{eq:J}
 \begin{split}
\sum_{s=0}^{+\infty} J_s(\tau, 0)
= & ~
\sum_{s=0}^{+\infty} |J_s|2^{-\tau (Q(s+1) + \Xi(s+1)) + 2 \tau \xi
\cdot (s+1)}
\\ \le & ~
\sum_{s=0}^{+\infty} 2^{-(\tau-1)Q(s+1) - \tau (\Xi - 2 \xi) (s+1) }
\\ \le & ~
\frac{2^{-\tau}}{1-2^{-\tau}}.
\end{split}
\end{equation}
Combining inequalities~\eqref{eq:Is+1} and~\eqref{eq:J}, we get for every~$\tau \ge 1$
\begin{equation*}
\Pi(\tau, 0)
\le
1 + 2^{\tau\xi} \frac{2^{-\tau}}{\left( 1-2^{-\tau} \right)^2}
+ \frac{2^{-\tau}}{1-2^{-\tau}}
\le
2 \left( 2^{\tau\xi} + 1 \right).
\end{equation*}
This is part~$1$ of the lemma.

\partn{2}
Fix~$s$ in $[0,+\infty)$ and set $s_0 \= {\lfloor s \rfloor}$.
We use~\eqref{eq:Is+1} to estimate~$\Pi(\tau, \lambda(s))$.
To estimate~$\sum_{j=0}^{+\infty}J_j(\tau, \lambda(s))$, note that by definition
of~$\lambda(s)$, for each integer~$\ell$ satisfying~$1 \le \ell \le |J_s|$ we
have
\[
\frac{1}{2} \le  2^{- \lambda(s) \ell} \le 1.
\]
On the other hand, the hypothesis $q \ge 3$ implies that the function~$j \mapsto
|J_j|$ is nondecreasing on~$[0, + \infty)$.
Therefore, for each~$j$ in~$\{0, \ldots, s_0 \}$ we have~$|J_j| \le |J_s|$ and
then
\begin{equation}
\label{eq:geometrica1}
\frac{1}{2}|J_j|
\le
\sum_{m = 1}^{|J_j|} 2^{- \lambda(s) m}
\le
|J_j|.
\end{equation}
On the other hand
\begin{equation}\label{eq:geometrica2}
\sum_{m = 1}^{+ \infty} 2^{- \lambda(s) m}
=
\frac{1}{2^{\lambda(s)} - 1}
\le
\frac{1}{\lambda(s) \log 2}
\le
2 |J_s|.
\end{equation}
Note also that, by~\eqref{eq:N J}, \eqref{eq:B J}, and the hypothesis~$q + \Xi \ge 1$, for every~$j$ in~$\N_0$
we have by \eqref{eq:geometrica2} and $|J_s| \le 2^{Q(s+1)}$,
\begin{equation}
\label{eq:10}
\begin{split}
J_j(\tau,\lambda(s))
& =
2^{-\tau(Q(j+1) + \Xi \cdot (j+1)) + 2 \tau \xi \cdot (j+1)} \sum_{k \in J_j}
2^{- \lambda(s)k}\\
&~ \le 
2 |J_s| 2^{-\tau(Q(j+1) + (\Xi - 2 \xi)(j + 1))}
\\ 
&~ \le 
2 \cdot 2^{Q(s + 1)-\tau(Q(j+1) + (\Xi - 2 \xi)(j + 1))} .
\end{split}
\end{equation}
Taking~$j = s_0 + 1$ and using the inequality~$Q(s + 1) \le Q(s_0 + 2)$, we
obtain,
\begin{equation}
\label{eq:intermediate term}
J_{s_0 + 1}(\tau, \lambda(s))
\le
2 \cdot 2^{(1 - \tau) Q(s_0 + 2) - \tau (\Xi - 2 \xi)(s_0 + 2)} .
\end{equation}
On the other hand, our hypothesis $\tau \ge \frac{Q(s + 1) - 1}{Q(s + 2)}$ implies that for $j \ge s_0+2$ we have
\[
Q(s + 1) - \tau Q(j + 1) \le Q(s + 1) - \tau Q(s + 2) \le  1.
\]
So, using the hypothesis~$\Xi - 2 \xi \ge 1$ and summing~\eqref{eq:10} over~$j$
satisfying~$j \ge s_0 + 2$, we obtain
\begin{equation}
\label{eq2}
\sum_{j = s_0 + 2}^{+\infty} J_j(\tau,\lambda(s))
\le
\sum_{j=s_0 + 2}^{+\infty} 2^{2 -\tau (\Xi - 2\xi) (j+1)}
\le
\frac{2^{2-3(\Xi - 2\xi)\tau  }}{1-2^{-\tau }}.
\end{equation}
Now we complete the estimate of~$\sum_{j=0}^{+\infty}J_j(\tau, \lambda(s))$, by
estimating the terms for which~$j$ is in~$\{0, \ldots, s_0 \}$.
From~\eqref{eq:geometrica1}, the first equality in~\eqref{eq:10}, and $|J_j| \le
2^{Q(j+1)}$, we deduce that for every integer~$j$ in~$\{0, \ldots, s_0 \}$ we
have
\begin{equation*}
J_j(\tau, \lambda(s))
 \le
|J_j| \cdot 2^{-\tau (Q(j+1) + (\Xi - 2 \xi)(j + 1))}
 \le
2^{(1-\tau )Q(j+1) -\tau (\Xi - 2 \xi)(j+1)}.
\end{equation*}
Summing over~$j$ in~$\{0, \ldots, s_0 \}$ and using
inequalities~\eqref{eq:intermediate term} and~\eqref{eq2}, we obtain
\begin{multline*}
\sum_{j=0}^{+\infty}J_j(\tau, \lambda(s))
\\ \le
\sum_{j=0}^{s_0} 2^{(1-\tau )Q(j+1) -\tau (\Xi - 2 \xi)(j+1)}
+ 2 \cdot 2^{(1 - \tau) Q(s_0 + 2) - \tau (\Xi - 2 \xi)(s_0 + 2)}
+ \frac{ 2^{2 - 3(\Xi-2\xi)\tau  }}{1-2^{-\tau }}
\\ \le
2 \sum_{j=0}^{s_0 + 1} 2^{(1-\tau )Q(j+1) -\tau (\Xi - 2\xi)(j+1)}
+ \frac{ 2^{2 - 3(\Xi-2\xi)\tau  }}{1-2^{-\tau }}.
\end{multline*}
Together with~\eqref{eq:Is+1} this implies
\begin{multline}
\label{e:preliminary upper bound}
 \Pi(\tau ,\lambda(s))
\le
1 + \frac{ 2^{-\tau }}{\left( 1-2^{-\tau } \right)^2} 2^{\tau \xi}
\\ +
2 \sum_{j=0}^{s_0 + 1} 2^{(1-\tau )Q(j+1) -\tau (\Xi - 2\xi)(j+1)}
+ \frac{ 2^{2 - 3(\Xi - 2\xi)\tau  }}{1- 2^{-\tau } }.
\end{multline}
Using the hypothesis that~$\tau$ is in~$(1/2, 1)$, we have~$\frac{ 2^{-\tau }}{\left( 1-2^{-\tau } \right)^2} \le 10$.
Using in addition the hypotheses~$q \ge 3$ and~$\Xi - 2 \xi \ge 1$, we have
\begin{multline*}
\frac{ 2^{2 - 3(\Xi - 2\xi)\tau  }}{1-2^{-\tau }}
\le
3 \cdot 2^{3 - (\Xi - 2\xi + 3)\tau }
\le
3 \cdot 2^{(1 - \tau)Q(1) - (\Xi - 2\xi)\tau}
\\ \le
3 \sum_{j=0}^{s_0 + 1} 2^{(1-\tau )Q(j+1) -\tau (\Xi - 2\xi)(j+1)}.
\end{multline*}
We obtain part~$2$ of the lemma by combining these estimates
with~\eqref{e:preliminary upper bound}.

\partn{3}
Fix~$s$ in~$[0, + \infty)$ and set~$s_0 \= {\lfloor s \rfloor}$.
By part~(b) of Lemma~\ref{l:the itinerary} and the definition of~$\lambda(s)$, for each~$s$ in~$[0, + \infty)$ we have
\[
\lambda(s) = |J_s|^{-1} \le \frac{1}{2^{Q(s+1)-1}}.
\]
From this inequality and from part~(a) of Lemma~\ref{l:the itinerary}, we obtain  that for every
integer~$j$ in~$\{0, \ldots, s_0 \}$ we have
\begin{equation}
\begin{split}
& \lambda(s) (2^{Q(j)}+Q(j+1)-Q(j) + \Xi - 1)
\\ & \le ~
\lambda(s) (2^{Q(j)} + Q(j+1) + \Xi)
\\ & \le ~
\frac{2^{Q(s)} + Q(s+1) + \Xi}{2^{Q(s+1) - 1}}
\\ & \le 1.
\end{split}
\end{equation}
In view of part~(b) of Lemma~\ref{l:the itinerary}, formulas~\eqref{eq:N J} and~\eqref{eq:B J}, the first inequality of~\eqref{eq:geometrica1}, the first equality in~\eqref{eq:10}, and the hypothesis~$q + \Xi \ge 1$, we deduce that for~$j = s_0$ we have
\begin{equation}
\begin{split}
& \frac{1}{8} 2^{(1-\tau )Q(s_0+1)-\tau (\Xi - 2 \xi)(s_0+1)}
\\ & \le
\frac{1}{4} |J_{s_0}| 2^{- \tau (Q(s_0+1) + \Xi \cdot (s_0+1)) + 2 \tau  \xi \cdot (s_0 + 1)}
\\ & \le
\frac{1}{2} |J_{s_0}| 2^{- \lambda(s) (2^{Q(s_0)}+Q(s_0+1)-Q(s_0) + \Xi - 1) -\tau(Q(s_0+1) + \Xi \cdot (s_0+1)) + 2 \tau \xi \cdot (s_0 + 1)}
\\ & \le
\left(\sum_{m = 1}^{|J_{s_0}|} 2^{- \lambda(s) m} \right) 2^{- \lambda(s) (2^{Q(s_0)}+Q(s_0+1)-Q(s_0) + \Xi - 1) -\tau(Q(s_0+1) + \Xi \cdot (s_0+1)) + 2 \tau \xi \cdot (s_0 + 1)}
\\ & =
J_{s_0}(\tau, \lambda(s)).
\end{split}
\end{equation}
This proves part~$3$ of the lemma and completes the proof.
\endproof

Define the function~$s : (- \infty, 1) \to \R$ by
$$ s(\tau) = \frac{\Xi - 2\xi}{q(1 - \tau)}. $$
\begin{lemm}\label{lem:22}
For every~$\tau$ in~$\left( \tfrac{q-2}{q-1}, 1 \right)$, we have
\[
 \Pi \left( \tau , \lambda \left( s(\tau) - 2 \right) \right)
\le
10 \cdot 2^{\tau \xi} + 101,
\]
and for every~$\Omega \ge 0$, we have
\[
2^{\Omega - 3}
\le
\Pi \left(\tau , \lambda \left(s(\tau) + \Omega \right) \right).
\]
\end{lemm}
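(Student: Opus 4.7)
Both inequalities will come from Lemma~\ref{lem:11}: the upper bound uses part~$2$ applied with $s = s(\tau)-2$, and the lower bound uses part~$3$ applied with $s = s(\tau)+\Omega$. Throughout I will use the identity $(1-\tau)\, q\, s(\tau) = \Xi - 2\xi$, which is built into the definition of $s(\tau)$, and write $g(k) \= (1-\tau) q k^2 - \tau(\Xi-2\xi) k$ for the function whose values at integers are the exponents appearing in Lemma~\ref{lem:11}.

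\emph{Upper bound.}
I first verify the hypothesis $\tau > (Q(s+1)-1)/Q(s+2)$ of Lemma~\ref{lem:11}(2) for $s = s(\tau)-2$: using the above identity it reduces to
\[
\frac{\Xi-2\xi}{q} + \frac{q-1}{q\, s(\tau)} < 2,
\]
which is immediate from $q \ge 3$ and $1 \le \Xi - 2\xi \le 2$. Lemma~\ref{lem:11}(2) then gives
\[
\Pi(\tau, \lambda(s(\tau)-2)) \;\le\; 1 + 10 \cdot 2^{\tau \xi} + 5 \sum_{j=0}^{s_0+1} 2^{g(j+1)},
\]
with $s_0 \= \lfloor s(\tau) - 2 \rfloor$, so the task reduces to bounding the sum on the right by a universal constant (of order~$20$). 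For this I will reindex by $i = s_0+1-j$ and expand
\[
g(s_0+2-i) \;=\; g(s_0+2) - \mu\, i + (1-\tau)\, q\, i^2,
\qquad
\mu \= 2(1-\tau)q(s_0+2) - \tau(\Xi-2\xi).
\]
Two elementary inequalities do the job: first, $g(s_0+2) \le (\Xi-2\xi)^2/q \le 4/3$, since $s_0 + 2 \le s(\tau)$; second, the linear decay rate satisfies $\mu \ge (\Xi-2\xi)(1 - 2/s(\tau))$ with a positive lower bound in terms of $q$ and $\Xi - 2\xi$, because $s_0+2 \ge s(\tau) - 1$. Given these, the summand decays geometrically with ratio $\approx 2^{-\mu}$ near the top of the sum, and the quadratic correction $(1-\tau)\, q\, i^2$ remains subdominant to $\mu\, i$ on the range $0 \le i \le s_0+1 \le s(\tau)-1$, so the full sum is controlled by a convergent geometric series of constant sum.

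\emph{Lower bound.}
Lemma~\ref{lem:11}(3) applied with $s = s(\tau)+\Omega$ yields $\Pi(\tau, \lambda(s)) \ge \tfrac{1}{8}\, 2^{g(L)}$ with $L = \lfloor s(\tau)+\Omega\rfloor + 1 \ge s(\tau)+\Omega$. Using $(1-\tau)\, q = (\Xi-2\xi)/s(\tau)$, I factor
\[
g(L) \;=\; \frac{L\,(\Xi-2\xi)\,(L - \tau s(\tau))}{s(\tau)}.
\]
The bounds $L/s(\tau) \ge 1$, $L - \tau s(\tau) \ge (1-\tau)s(\tau) + \Omega \ge \Omega$, and $\Xi-2\xi \ge 1$ combine immediately to give $g(L) \ge \Omega$, so that $\Pi \ge 2^{\Omega-3}$.

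\emph{Main obstacle.}
The delicate step is the upper bound: although each summand is at most $2^{g(s_0+2)} \le 2^{4/3}$, the number of summands is of order $s(\tau)$ and tends to infinity as $\tau \to 1^-$, so a naive count cannot produce a universal bound. The uniform control relies on the geometric decay away from the right endpoint of the sum with ratio $\approx 2^{-\mu}$, together with the fact that the quadratic correction $(1-\tau)\, q\, i^2$ never overcomes the linear decay $\mu\, i$ before $i$ reaches the end of the range. The choice $s = s(\tau) - 2$ (rather than $s(\tau) - 1$) is made precisely to provide the extra margin $i \le s(\tau) - 1$ that keeps the quadratic correction under control.
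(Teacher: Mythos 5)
The lower bound and the hypothesis verification (modulo the small point below) are in order; the genuine gap is in the upper bound.

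\textbf{Hypothesis check.} Your reduction of the condition $\tau > (Q(s+1)-1)/Q(s+2)$ at $s = s(\tau)-2$ to $\frac{\Xi - 2\xi}{q} + \frac{q-1}{q\,s(\tau)} < 2$ is correct, but it is not immediate from $q\ge 3$ and $1\le\Xi-2\xi\le 2$ alone; you also need $\tau > \frac{q-2}{q-1}$, which gives $s(\tau) > \frac{(\Xi-2\xi)(q-1)}{q}$ and hence $\frac{q-1}{q\,s(\tau)} < \frac{1}{\Xi-2\xi} \le 1 < 2 - \frac{\Xi-2\xi}{q}$. Easy fix.

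\textbf{Lower bound.} This part is correct and a hair slicker than the paper: you factor $g(L)$ directly, whereas the paper computes $F(s(\tau)+\Omega)$ and invokes monotonicity of $F$ on $\left[\frac{\tau s(\tau)}{2},+\infty\right)$ to pass to $F(\lfloor s^-\rfloor +1)$. Both yield $g(L)\ge\Omega$.

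\textbf{Upper bound — the gap.} You expand $g(s_0+2-i) = g(s_0+2) - \mu i + (1-\tau)q\,i^2$ and claim the quadratic term stays subdominant to $\mu i$, so the sum is controlled by one geometric series. But the effective decay increment $\mu - (1-\tau)q\,i$ vanishes as $i\to s_0+1$: indeed
\[
\mu - (1-\tau)q\,(s_0+1) \;=\; (1-\tau)q\,(s_0+3) - \tau(\Xi-2\xi) \;\le\; (1-\tau)\,(q + \Xi - 2\xi) \;\longrightarrow\; 0 \quad \text{as } \tau\to 1^-.
\]
Concretely, $g(s_0+2-i)$ is a convex parabola in $i$, with minimum at $i^* = \mu/(2(1-\tau)q) \approx \tfrac{1}{2}(s_0+2)$; past $i^*$ it climbs back up, so the summands near $i = s_0+1$ (i.e.\ $\ell$ near $1$) are of order $2^{g(1)}$, not exponentially below $2^{g(s_0+2)}$. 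There is no universal geometric rate from the top end, and a naive count of $\sim s(\tau)$ terms is unbounded. (Your asserted lower bound $\mu \ge (\Xi-2\xi)(1 - 2/s(\tau))$ is correct but only positive when $s(\tau)>2$, so the claimed ``positive lower bound in terms of $q$ and $\Xi-2\xi$'' does not exist uniformly.)

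\textbf{What is needed.} You must control geometric decay from \emph{both} endpoints simultaneously. The paper does this by proving, from the convexity of $F$ and the values $F(0)=0$, $F(s(\tau)) = (\Xi-2\xi)^2/q$, $F(s(\tau)/2)$, the two-sided bound
\[
F(\ell) \;\le\; \frac{(\Xi-2\xi)^2}{q} - \frac{\Xi-2\xi}{2}\min\{\ell,\, s(\tau)-\ell\}, \qquad \ell\in[0,s(\tau)],
\]
which yields geometric decay with universal rate $\frac{\Xi-2\xi}{2}\ge\frac12$ away from both ends, so the sum is dominated by twice a geometric series: $2\sum_{\ell\ge 0} 2^{(\Xi-2\xi)^2/q - (\Xi-2\xi)\ell/2} \le 20$. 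Replacing your one-sided estimate with this (or an equivalent two-sided argument) closes the gap.
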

\proof
Fix~$\tau$ in~$\left( \tfrac{q-2}{q-1}, 1 \right)$ and~$\Omega \ge 0$.
Note that the inequality~$\tau > \tfrac{q-2}{q-1}$ implies~$\tau > \tfrac{q-4}{q}$.
Moreover, this last inequality is equivalent to
$$ \tau
>
\frac{Q\left(\frac{2}{q(1-\tau)} - 1\right) - 1}{Q\left(\frac{2}{q(1-\tau)}\right)}.$$
On the other hand, the function~$s \mapsto \frac{Q(s - 1) - 1}{Q(s)}$ is strictly increasing on~$\left( \tfrac{q-1}{q}, + \infty \right)$.
Since the inequalities~$1\le \Xi -2\xi$ and~$\tau > \tfrac{q-2}{q-1}$ imply $s(\tau) > \tfrac{q-1}{q}$, using $\Xi -2\xi\le 2$ we deduce 
$$
\frac{Q(s(\tau) - 1) - 1}{Q(s(\tau))}
\le
\frac{Q\left(\frac{2}{q(1-\tau)} - 1\right) - 1}{Q\left(\frac{2}{q(1-\tau)}\right)}
<
\tau.$$
So the hypotheses of part~$2$ of Lemma~\ref{lem:11} are satisfied with~$s =
s(\tau) - 2$.
Let~$F : \R \to \R$ be the quadratic function defined by
$$ F(\ell) \= (1 - \tau) Q(\ell) - \tau (\Xi - 2\xi) \ell. $$
Note that $F(0) = 0$,
$$ F \left( \frac{s(\tau)}{2} \right)
=
\frac{(\Xi - 2\xi)^2}{2q} - \frac{\Xi - 2\xi}{2} \left( \frac{s(\tau)}{2}
\right)
\text{ and }
F(s(\tau)) = \frac{(\Xi - 2\xi)^2}{q}. $$
Using that~$F$ is convex, we conclude that for each~$\ell$ in~$[0, s(\tau)]$ we
have
\begin{multline*}
F(\ell) = (1 - \tau) Q(\ell) - \tau (\Xi - 2 \xi) \ell
\\ \le
\frac{(\Xi - 2\xi)^2}{q} - \frac{\Xi - 2 \xi}{2} \min \left\{ \ell, s(\tau) - \ell \right\}.
\end{multline*}
Therefore, putting~$s^+ = s(\tau) - 2$ and using~$1 \le \Xi - 2\xi \le 2$ and~$q \ge 3$, we have
\begin{multline*}
\sum_{j=0}^{\lfloor s^+ \rfloor + 1} 2^{(1- \tau)Q(j+1) - \tau (\Xi - 2\xi) (j+1)}
\le
2 \sum_{\ell = 0}^{\lfloor s^+ \rfloor + 2 } 2^{\frac{(\Xi - 2\xi)^2}{q}
-\frac{\Xi - 2\xi}{2} \ell}
\\ \le
2 \cdot 2^{\frac{4}{q}} \frac{1}{1-2^{-\frac{1}{2}}}
\le
20.
\end{multline*}
The first inequality of the lemma is then obtained using part~$2$ of
Lemma~\ref{lem:11} with~$s = s^+$.

To prove the second inequality, note that
\begin{equation*}
F(s(\tau) + \Omega)
= \frac{(\Xi-2\xi)^2}{q}+
(\Xi - 2\xi) (2 - \tau) \Omega + q(1 - \tau) \Omega^2
\ge
(\Xi - 2\xi) \Omega
\ge
\Omega
\end{equation*}
and that~$F$ is increasing on the interval~$\left[\frac{\tau}{2}  \frac{\Xi - 2\xi}{q(1 -
\tau)}, + \infty \right)$, that contains~$s(\tau)$.
So, if we put~$s^- = s(\tau) + \Omega$, then
\begin{equation*}
\Omega
\le
F(s^-)
\le
F({\lfloor s^- \rfloor} + 1)
=
(1 - \tau) Q({\lfloor s^- \rfloor} + 1) - \tau (\Xi - 2\xi) ({\lfloor s^-
\rfloor} + 1).
\end{equation*}
Together with part~$3$ of Lemma~\ref{lem:11} with~$s = s^-$, we obtain
\[
2^{\Omega}
\le
2^{(1 -\tau )Q(\lfloor s^- \rfloor + 1) - \tau (\Xi - 2\xi)(\lfloor s^- \rfloor
+ 1)}
\le
8 \Pi \left(\tau , \lambda \left(s^- \right) \right),
\]
from which we obtain the second inequality of the lemma.
\endproof
\begin{proof}[Proof of Proposition~\ref{p:estimating 2 variables series}]
The first inequality is part~$1$ of Lemma~\ref{lem:11}.
To prove the other inequalities, note that by the definition of~$\lambda(s)$ we
have~$\lambda(s) \ge 2^{-Q(s + 1)}$.
On the other hand, by part~(b) of Lemma~\ref{l:the itinerary} we have~$\lambda(s) \le 2 \cdot 2^{- Q(s +
1)}$.
So, using the definition of the function~$s$ we have for each~$\tau$ in~$(0, 1)$
and~$\Delta \ge  1$,
$$ \lambda(s(\tau) - 2)
\le
2 \cdot 2^{- q \left( \frac{\Xi - 2\xi}{q(1 - \tau)} - 1 \right)^2} $$
and
$$ \lambda(s(\tau) + \Delta - 1)
\ge
2^{- q \left( \frac{\Xi - 2\xi}{q(1 - \tau)} + \Delta\right)^2}. $$
Then the desired inequalities are a direct consequence of Lemma~\ref{lem:22}
with~$\Omega = \Delta -1$ and of the fact that for a fixed~$\tau$ the function
$$ \lambda \mapsto \Pi(\tau, \lambda)$$
is nonincreasing on the set where it is finite.
\end{proof}

\section{Estimating the geometric pressure function}
\label{s:finale}

In this section we prove the Main Technical Theorem.
In~\S\ref{ss:conformal measures} we show a general result about conformal 
measures, and in~\S\ref{ss:transition parameter} we make some technical estimates
(Proposition~\ref{p:transition divergence}).
The proof of Main Technical Theorem is 
in~\S\ref{ss:proof of Main Technical Theorem}, after recalling a few results
from~\cite{CorRiva}.
\subsection{Conformal measures}
\label{ss:conformal measures}
Recall that, given an integer~$n \ge 3$ and a parameter~$c$ in~$\cK_n$, the \emph{conical} or \emph{radial Julia set of~$f_c|_{I_c}$} (resp.~$f_c$) is the set of all points~$x$ in~$I_c$ (resp.~$J_c$) for which the following property holds: There exists~$r > 0$ and an unbounded sequence of positive integers~$(n_j)_{j = 1}^{+ \infty}$, such that for every~$j$ the map~$f_c|_{I_c}^{n_j}$ (resp.~$f_c^{n_j}$) maps a neighborhood of~$x$ in~$I_c$ (resp.~$J_c$) diffeomorphically to~$B(f_c^{n_j}(x), r)$.

\begin{prop}
\label{p:conformal measures}
Let~$n \ge 4$ be an integer, $c$ a parameter in~$\cK_n$, and let~$t > 0$ and~$p$ in~$\R$ be given.
Then there is at most one $(t, p)$\nobreakdash-conformal probability measure of~$f_c|_{I_c}$ (resp.~$f_c$) supported on~$I_c$ (resp.~$J_c$).
If such a measure~$\mu$ exists, then~$p \ge P_c^{\R}(t)$ 
(resp. $p \ge P_c^{\C}(t)$),
and~$\mu$ is either supported on the backward orbit of~$0$ and dissipative, or~$\mu$ is nonatomic
and supported on the conical Julia set of~$f_c|_{I_c}$ (resp.~$f_c$).
Furthermore, the former case holds precisely when following series converges:
\begin{multline}
  \label{e:Poincare series}
 \sum_{j = 1}^{+ \infty} \exp(- j p) \sum_{y \in f_c|_{I_c}^{-j}(0)} |Df_c^j (y)|^{-t}  
\\ 
\left( \text{resp. } \sum_{j = 1}^{+ \infty} \exp(- j p) \sum_{y \in f_c^{-j}(0)} |Df_c^j (y)|^{-t} \right).
\end{multline}
\end{prop}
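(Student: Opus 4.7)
The plan is to prove the real and complex statements simultaneously, since the arguments are parallel, and to use the conformality identity $\mu(f_c(U)) = e^p \int_U |Df_c|^t\, d\mu$ throughout. Applying it to $U = \{0\}$ and using $Df_c(0) = 0$ together with $t > 0$ immediately yields $\mu(\{c\}) = 0$, which is the seed observation. The main dichotomy will come from a Hopf-style conservative/dissipative decomposition $\mu = \mu_{\mathrm{cons}} + \mu_{\mathrm{diss}}$, each piece being $(t,p)$-conformal in its own right because forward images of atoms are atoms and the non-atomic part remains non-atomic under the local Jacobian induced by $f_c$. Given uniqueness (see below), at most one of these pieces is non-zero, which yields the claimed trichotomy into the atomic and the non-atomic cases.

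Assume next that $\mu$ has an atom at some $x_0$. Forward-iterating the conformality identity gives $\mu(\{f_c^j(x_0)\}) = e^{jp}|Df_c^j(x_0)|^t \mu(\{x_0\})$ so long as $0 \notin \{x_0,\ldots,f_c^{j-1}(x_0)\}$. If the forward orbit of $x_0$ never meets $0$, I would expand the grand orbit backward using the conformal identity, invoke the topological exactness of $f_c|_{I_c}$ (resp.\ $f_c$ on $J_c$) to make the resulting tree of preimages dense, and then combine with the uniform Koebe distortion provided by Lemma~\ref{l:distortion to central 0} to compare the total atomic mass with a Poincar\'e series of the type~\eqref{e:Poincare series} centered at $x_0$; the resulting divergence, or the comparison with a would-be conformal measure supported on $\bigcup_{j \ge 0} f_c^{-j}(0)$, contradicts $\mu$ being a probability. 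This confines all atoms to the backward orbit of $0$. Conformality then fixes each atom mass in terms of $m_0 \= \mu(\{0\})$ via $\mu(\{y\}) = e^{-jp}|Df_c^j(y)|^{-t} m_0$ for $y \in f_c^{-j}(0)$, and the probability normalization forces convergence of~\eqref{e:Poincare series} with $m_0$ the reciprocal of the normalising total. The construction is reversible, so the series converges if and only if an atomic $(t,p)$-conformal probability measure exists; dissipativity is then built in.

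In the non-atomic case, I would run a pull-back argument: for $\mu$-a.e.\ point $x$, choose a typical non-precritical accumulation point $y$ of the forward orbit, pull a small disk around $y$ back along the corresponding subsequence, and apply the uniform Koebe extension coming from Lemma~\ref{l:distortion to central 0} on the puzzle pieces of~\S\ref{ss:puzzles}, to obtain univalent inverse branches of definite size. This exhibits $\mu$-a.e.\ $x$ as a conical point. For the lower bound $p \ge P_c^{\C}(t)$ (resp.\ $P_c^{\R}(t)$), the same Koebe estimate gives $\mu(P_{c,n}(x)) \asymp e^{-np}|Df_c^n(x)|^{-t}$ for each univalent puzzle piece of depth $n$, whence for any $(n,\varepsilon)$-separated set $E_n$ the sum $\sum_{x \in E_n} |Df_c^n(x)|^{-t}$ is bounded by a constant times $e^{np}$; letting $n \to \infty$ in $\tfrac{1}{n}\log$ dominates the variational expression defining $P_c$.

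The main obstacle will be \emph{uniqueness} and the exclusion of atoms off the backward orbit of $0$, which are really the same issue. The strategy is to show that any two $(t,p)$-conformal probability measures on $J_c$ (resp.\ $I_c$) are mutually absolutely continuous, by comparing them on small puzzle pieces using the density of preimages from topological exactness and the Koebe extension on $\hW_c$ of~\S\ref{ss:expanding Cantor set}; conformality of both with the same $(t,p)$ then forces the Radon--Nikodym derivative to be $f_c$-invariant, and the final step --- constancy of this derivative --- reduces to ergodicity of $f_c$ under the common measure class. This ergodicity is the delicate point: it should follow by pulling back typical sets along univalent inverse branches and using the exponential contraction on $\Lambda_c$ from Lemma~\ref{l:contractions}, but the careful handling of branches that approach the critical orbit is what makes this the hard part of the argument.
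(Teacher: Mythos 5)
Your proposal takes a genuinely different route from the paper, but the hard parts that you yourself flag as ``delicate'' are exactly what the paper sidesteps by citing two substantial external results, and your sketch does not actually close those gaps.

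The paper's proof is short precisely because it leans on (i) Urba\'nski's theorem \cite[Theorem~4.2]{Urb03b}, which for a non-recurrent map identifies the conical Julia set with the complement of the backward orbit of~$z=0$, and (ii) Przytycki--Rivera-Letelier \cite[Proposition~4.1]{PrzRiv11}, which gives both the nonatomicity and the uniqueness of a conformal measure that charges the conical set. With (i) the dichotomy atomic-on-$\bigcup f_c^{-j}(0)$ versus nonatomic-on-the-conical-set is immediate, and with (ii) uniqueness in the conical case is a citation, not a construction. Your plan tries to rebuild both from scratch. Three concrete issues follow. First, the Hopf decomposition argument is circular as written: to conclude that at most one of $\mu_{\mathrm{cons}}$, $\mu_{\mathrm{diss}}$ survives you invoke uniqueness, but uniqueness is the open point you defer to the end; moreover your justification that each piece is separately $(t,p)$-conformal (``forward images of atoms are atoms'') confuses the atomic/nonatomic split with the conservative/dissipative split, and the conformality of each Hopf component needs a genuine invariant-set argument, not an atomicity remark. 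Second, the exclusion of atoms outside $\bigcup f_c^{-j}(0)$ is asserted via ``compare with a would-be conformal measure... contradicts $\mu$ being a probability,'' but no estimate is given; this is exactly the content that Urba\'nski's theorem packages for you, and redoing it requires controlling preimage trees near the critical orbit, which Lemma~\ref{l:distortion to central 0} alone does not do (that lemma is a distortion bound for pullbacks of the fixed piece $P_{c,1}(0)$, not a general Koebe tool for disks centered at arbitrary typical points). Third, the ergodicity step you describe for uniqueness --- showing the Radon--Nikodym derivative is $f_c$-invariant and then constant --- is the entire content of \cite[Proposition~4.1]{PrzRiv11} and is acknowledged in your own write-up as ``the hard part''; you do not supply it.

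Your separated-set route to $p\ge P_c^{\C}(t)$ via $\mu(P_{c,n}(x))\asymp e^{-np}|Df_c^n(x)|^{-t}$ is plausible and is a legitimate alternative to the paper's use of the tree pressure at $z=0$ from \cite[Theorem~A]{PrzRivSmi04}, though it would still need the comparison between the separated-set pressure and the variational pressure. Likewise the equivalence of the atomic case with the convergence of the Poincar\'e series~\eqref{e:Poincare series}, and the reconstruction of the atomic measure when the series converges, match the paper. So the bookkeeping parts of your plan are fine; the structural parts (dichotomy and uniqueness) are where you would need either to import the two cited results as the paper does, or to supply the missing ergodicity and atom-exclusion arguments in full.
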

\begin{proof}
By~\cite[Theorem~$4.2$]{Urb03b} the conical Julia set of~$f_c$ is the complement
 in~$J_c$ of the backward orbit of~$z = 0$.
This implies that the conical Julia set of~$f_c|_{I_c}$ contains the complement 
in~$I_c$ of the backward orbit of~$z = 0$ under~$f_c|_{I_c}$.
On the other hand, this last set is clearly disjoint from the conical Julia set 
of~$f_c|_{I_c}$, so this proves that the conical Julia set of~$f_c|_{I_c}$ is the
complement in~$I_c$ of the backward orbit of~$z = 0$.

Let~$\mu$ be a~$(t, p)$\nobreakdash-conformal probability measure
for~$f_c|_{I_c}$ (resp.~$f_c$) supported on~$I_c$ (resp.~$J_c$).
If~$\mu$ is supported on the backward orbit of~$z = 0$, then it is uniquely determined by the mass it assigns to~$z = 0$, and therefore it is unique up to a scalar factor.
Note moreover that in this case~$\mu$ is dissipative, because it charges the wandering set~$\{ 0 \}$.
If~$\mu$ is not entirely supported on the backward orbit of~$z = 0$, then it charges the conical Julia set, so~$\mu$ is nonatomic, it is supported on the conical Julia set and it is the unique $(t, p)$\nobreakdash-conformal measure of~$f_c|_{I_c}$ (resp.~$f_c$) supported on~$I_c$ (resp.~$J_c$), up to a scalar factor, see~\cite[Proposition~$4.1$]{PrzRiv11} for the complex case; the proof of the uniqueness part of this result applies without change to the real case.
This completes the proof that~$\mu$ is unique.

To prove that in the complex case we have~$p \ge P_c^{\C}(t)$, let~$\delta > 0$
be sufficiently small so that~$B(0, 2 \delta)$ is disjoint from the forward
orbit of the critical point.
It follows that there is a constant~$K > 1$ such that for every integer~$j \ge 1$ and every~$y$ in~$f_c^{-j}(0)$, the map~$f_c^j$ maps a neighborhood~$W_y$ of~$y$ biholomorphically to~$B(0, \delta)$ with distortion bounded by~$K$.
Therefore,
$$ \mu(W_y)
\ge
K^{-t} \exp(- j p) |Df_c^j (y)|^{-t} \mu(B(0, \delta)). $$
So, if we put~$C \= K^{-1} \mu(B(0, \delta)) > 0$, then for every integer~$j \ge 1$ we have
$$ 1
\ge
\sum_{y \in f_c^{-j}(0)} \mu(W_y)
\ge
C \exp(- j p) \sum_{y \in f_c^{-j}(0)} |Df_c^j (y)|^{-t}. $$
Since by~\cite[Theorem~A]{PrzRivSmi04} we have
$$ \lim_{j \to + \infty} \frac{1}{j} \log \sum_{y \in f_c^{-j}(0)} |Df_c^j (y)|^{-t}
=
P_c^{\C}(t), $$
this proves~$p \ge P_c^{\C}(t)$.

To prove that in the real case we have~$p \ge P_c^{\R}(t)$, we note that the proof of~\cite[Proposition~$2.1$]{PrzRivSmi04} can be adapted to show
$$ \lim_{j \to + \infty} \frac{1}{j} \log \sum_{y \in (f_c|_{I_c})^{-j}(0)} |Df_c^j (y)|^{-t}
=
P_c^{\R}(t), $$ using the fact that~$z = 0$ is not in the closure of the orbit of the critical value of~$f_c$.
The rest of the proof of~$p \ge P_c^{\R}(t)$ is similar to the proof above.

To prove the last statement, observe first that if there is a $(t, p)$\nobreakdash-conformal measure for~$f_c|_{I_c}$ (resp.~$f_c$) that is supported on the backward orbit of~$z = 0$, then its total mass is equal to~\eqref{e:Poincare series} times the mass at~$z = 0$.
This proves that~\eqref{e:Poincare series} is finite.
Conversely, if~\eqref{e:Poincare series} is finite, then
\begin{multline*}
\delta_0 + \sum_{j = 1}^{+ \infty} \sum_{y \in f_c|_{I_c}^{-j}(0)} \exp(- j p) |Df_c^j (y)|^{-t} \delta_y
\\
\left( \text{ resp. $\delta_0 + \sum_{j = 1}^{+ \infty} \sum_{y \in f_c^{-j}(0)} \exp(- j p) |Df_c^j (y)|^{-t} \delta_y $} \right).
\end{multline*}
is finite and it is a $(t, p)$\nobreakdash-conformal measure for~$f_c|_{I_c}$ (resp.~$f_c$) supported on~$I_c$ (resp.~$J_c$).
\end{proof}

\subsection{Phase transition parameter}
\label{ss:transition parameter}
Recall that for each parameter~$c$ in~$\cP_5(-2)$, we have put
\[
\theta(c)
=
\left| \frac{Dg_c(p(c))}{Dg_c(\wtp(c))} \right|^{1/2}
\text{ and }
\xi(c)
=
- \frac{ \log (\zeta(c)\tzeta(c))}{4\log \theta(c)}.
\]
Put~$t(c) \= \frac{\log 2}{\log \theta(c)}$ and for every integer $n\ge 5$ put
\[
\xi_{n}
\=
\sup_{c\in \cK_{n}} \xi(c).
\]

This subsection is dedicated to prove the following estimates, used in the proof of the Main Technical Theorem.
\begin{prop}
\label{p:transition divergence}
There is an integer $n_2\ge 5$ such that for every integer $n\ge n_2$, and every~$c$ in~$\cK_{n}$, we have
$\lceil 2\xi_{n} + 1 \rceil - 2\xi(c)  \le 2$.
Furthermore, for every constant~$T > 0$ there is~$n_3 \ge 5$ such that for every integer~$n \ge n_3$ and every parameter~$c$ in~$\cK_n$, we have~$t(c) \ge T$.
\end{prop}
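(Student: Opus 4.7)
The plan is to establish both parts by showing that the key ingredients $\theta, \zeta, \tzeta$ have good limits at the Chebyshev parameter $c = -2$, and then combining with the Hausdorff convergence $\cK_n \to \{-2\}$ guaranteed by Proposition~\ref{p:ps}.

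For part~2 I would first compute $|Dg_{-2}(p(-2))|$ and $|Dg_{-2}(\wtp(-2))|$ directly from the Chebyshev structure of $f_{-2}$. Under the semiconjugacy $\varphi \mapsto 2\cos(2\pi\varphi)$ with the doubling map on $\R/\Z$, the two period-$3$ orbits of $f_{-2}$ correspond to the two period-$3$ orbits of $\varphi \mapsto 2\varphi \pmod{1}$ at angles $\varphi = k/7$ and $\varphi = k/9$. The telescoping identity $\prod_{k=0}^{2}\cos(2^k\varphi) = \sin(8\varphi)/(8\sin\varphi)$ applied at $\varphi = 2\pi/7$ and $\varphi = 2\pi/9$ yields both derivatives equal to $8$, so $\theta(-2) = 1$. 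Since $p$ and $\wtp$ are holomorphic on $\cP_3(-2)$, $\theta$ is continuous at $-2$, and given $T > 0$ there is a neighborhood $U$ of $-2$ on which $|\log\theta(c)| < (\log 2)/T$. Proposition~\ref{p:ps} then provides $n_3$ such that $\cK_n \subset U$ for every $n \ge n_3$, which gives $t(c) \ge T$.

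For part~1 I would reduce the claim to continuity of $\xi$ at $c = -2$. Using $\lceil 2\xi_n + 1\rceil = \lceil 2\xi_n\rceil + 1$, the target inequality becomes $\lceil 2\xi_n\rceil \le 2\xi(c) + 1$. If $\xi$ has a finite limit $\xi_*$ at $-2$, then for $n$ large Proposition~\ref{p:ps} together with continuity of $\xi$ forces both $\xi_n$ and $\xi(c)$ to lie within any preassigned $\epsilon > 0$ of $\xi_*$, so $\lceil 2\xi_n\rceil \le \lceil 2\xi_* + 2\epsilon\rceil$ while $2\xi(c) + 1 \ge 2\xi_* + 1 - 2\epsilon$; choosing $\epsilon$ small compared to the distance from $2\xi_*$ to the next integer above yields the bound. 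Continuity of $\xi$ in turn reduces to showing $\log(\zeta(-2)\tzeta(-2)) = 0$: both $\log(\zeta\tzeta)$ and $\log\theta$ are analytic on $\cP_5(-2)$ and the denominator vanishes simply at $-2$, so if the numerator also vanishes there, L'H\^opital yields a finite $\xi_* \in \R$. I would establish $\zeta(-2)\tzeta(-2) = 1$ through the Chebyshev involution of $\Lambda_{-2}$ that exchanges $Y_{-2}$ with $\tY_{-2}$ and $p(-2)$ with $\wtp(-2)$, sending each factor of the infinite product defining $\zeta(-2)$ to the reciprocal of the corresponding factor in $\tzeta(-2)$.

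The principal difficulty lies in verifying the Chebyshev symmetry in part~1. Without $\zeta(-2)\tzeta(-2) = 1$, the function $\xi$ would blow up at $c = -2$ like $\mathrm{const}/(c+2)$, and the ceiling estimate could not be established uniformly over $\cK_n$ since the oscillation of $\xi$ would stay of the same order as $\xi_n$ itself even as $\cK_n$ shrinks. The involution must be matched against the external rays $R_c(7/24), R_c(17/24)$ and $R_c(5/24), R_c(19/24)$ that distinguish $Y_c$ from $\tY_c$ (as recalled in~\S\ref{ss:expanding Cantor set}) and shown to interchange these at the Chebyshev parameter, so that the identification of the two infinite products is compatible with the marking conventions used to define $p$ and $\wtp$.
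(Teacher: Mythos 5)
Your high-level strategy matches the paper's: show that $\theta$ and $\zeta\tzeta$ both equal $1$ at the Chebyshev parameter $c=-2$, deduce regularity of $\xi$ near $-2$, and combine with Proposition~\ref{p:ps} (so $\cK_n$ shrinks to $\{-2\}$). However, two steps do not go through as written.

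The essential gap is in your proof of $\zeta(-2)\tzeta(-2)=1$. The ``Chebyshev involution of $\Lambda_{-2}$ exchanging $Y_{-2}$ with $\tY_{-2}$ and sending each factor of $\zeta(-2)$ to the reciprocal of the corresponding factor of $\tzeta(-2)$'' does not exist. The only natural involution here is the deck transformation $z\mapsto -z$; it does map $Y_{-2}$ onto $\tY_{-2}$, but since $g_{-2}(-z)=g_{-2}(z)$ it only flips the \emph{first} letter of the $g_{-2}$-itinerary. Thus for $\ell\ge 2$ it sends $I_{\underline{x}^{\ell}}(-2)$ (itinerary $0^{\ell}1^{\infty}$) to the point with itinerary $10^{\ell-1}1^{\infty}$, which is \emph{not} $I_{\underline{\widetilde{x}}^{\ell}}(-2)$, so the factors of $\zeta(-2)$ and $\tzeta(-2)$ are not paired. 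The flip $\underline{w}\mapsto\overline{\underline{w}}$ of the symbolic coding does pair the points correctly, but it is a purely topological conjugacy of $(\Lambda_{-2},g_{-2})$, not the restriction of any conformal or smooth symmetry of $f_{-2}$, so it gives no control of derivatives and cannot deliver the term-by-term reciprocity you need. The paper's proof (Lemma~\ref{l:analyticity of xi}) avoids any symmetry: it approximates $\zeta(c)\tzeta(c)$, via Proposition~\ref{p:Improved distortion estimate}, by the normalized multipliers $Dg_c^{2m}(p_m(c))/(Dg_c(p(c))\,Dg_c(\wtp(c)))^m$ of the periodic points $p_m(c)$ with itinerary period $0^m1^m$, and uses the global Chebyshev fact — a consequence of $f_{-2}(2\cos x)=2\cos(2x)$ — that every periodic orbit of $f_{-2}$ of period $k$ has multiplier of modulus $2^k$, so each approximant equals $1$ exactly. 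That algebraic identity, not an involution, is what makes $\zeta(-2)\tzeta(-2)=1$.

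There is also a secondary gap in your part~2. You only use continuity of $\theta$ at $-2$ to get $|\log\theta(c)|<(\log 2)/T$; but $t(c)=\log 2/\log\theta(c)$, so this bounds $|t(c)|$ from below while leaving the sign undetermined. One needs $\theta(c)>1$ (equivalently $|Dg_c(p(c))|>|Dg_c(\wtp(c))|$) for $c\in\cK_n$, and continuity alone cannot give a strict inequality at a point where $\theta(-2)=1$. The paper obtains $\theta(c)>1$ for $c$ slightly to the right of $-2$ from the strict positivity $D\theta(-2)>0$, which is exactly the content of Lemma~\ref{l:Derivative of theta at -2} and rests on the transversality inequality of Lemma~\ref{l:period 3 transversality} cited from~\cite{CorRiva}. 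Your proposal never invokes that transversality, and without it the conclusion $t(c)\ge T$ does not follow.
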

The proof of this proposition is at the end of this subsection and it follows
from the following sequence of lemmas.  
\begin{lemm}[\cite{CorRiva}, Lemma~A.$1$]
\label{l:period 3 transversality}
We have,
$$ \left. \frac{\partial}{\partial c} |Df_c^3(p(c))| \right|_{c = - 2}
>
\left. \frac{\partial}{\partial c} |Df_c^3(\wtp(c))| \right|_{c = - 2}. $$
\end{lemm}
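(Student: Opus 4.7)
The plan is to prove the inequality by direct computation at $c = -2$, where $f_{-2}(z) = z^2 - 2$ is the Chebyshev polynomial and is semiconjugate to angle doubling via $z = 2\cos\phi$. Solving $8\phi \equiv \pm\phi \pmod{2\pi}$ modulo the equivalence $\phi \sim -\phi$ identifies exactly two period-$3$ orbits,
\[
\cO_1 = \{2\cos(2\pi/7),\ 2\cos(4\pi/7),\ 2\cos(6\pi/7)\}
\]
and
\[
\cO_2 = \{2\cos(2\pi/9),\ 2\cos(4\pi/9),\ 2\cos(8\pi/9)\},
\]
whose elements are the real roots of $x^3 + x^2 - 2x - 1$ and $x^3 - 3x + 1$, respectively. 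Since $Df_{-2}^3$ on such an orbit equals $8\sigma_3$ of the minimal polynomial, the multipliers are $\lambda_1(-2) := Df_{-2}^3|_{\cO_1} = +8$ and $\lambda_2(-2) := Df_{-2}^3|_{\cO_2} = -8$.

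To identify $p(-2)$ and $\wtp(-2)$, note that $\alpha(-2) = -1$ gives $P_{-2,1}(0)\cap\R = (-1,1)$, while from the excerpt $Y_{-2}\cap\R\subset(-1,0)$ and $\tY_{-2}\cap\R\subset(0,1)$. Each orbit contains exactly one point in $(-1,1)$, so $p(-2) = 2\cos(4\pi/7)\in\cO_1$ and $\wtp(-2) = 2\cos(4\pi/9)\in\cO_2$. Since $\lambda_1(c)$ is positive and $\lambda_2(c)$ is negative in a neighborhood of $c = -2$, the claim becomes
\[
\lambda_1'(-2) = \tfrac{\partial}{\partial c}|Df_c^3(p(c))|\big|_{c=-2} > \tfrac{\partial}{\partial c}|Df_c^3(\wtp(c))|\big|_{c=-2} = -\lambda_2'(-2),
\]
i.e.\ $\lambda_1'(-2) + \lambda_2'(-2) > 0$.

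These derivatives I would compute by implicit differentiation. Writing $(z_0,z_1,z_2)$ for the orbit in cyclic order under $f_c$ and $\lambda(c) = 8z_0z_1z_2$, the relations $z_{k+1} = z_k^2 + c$ give $\dot z_{k+1} = 2z_k\dot z_k + 1$; solving the cyclic $3\times 3$ linear system yields $\dot z_0(1-\lambda) = 4z_1z_2 + 2z_2 + 1$ with two cyclic analogues. Summing the resulting expressions for $\dot\sigma_3 = \dot z_0 z_1 z_2 + z_0 \dot z_1 z_2 + z_0 z_1 \dot z_2$ produces
\[
(1 - \lambda)\dot\lambda/8 = 4(\sigma_2^2 - 2\sigma_1\sigma_3) + 2 S_1 + \sigma_2,
\qquad S_1 := z_1 z_2^2 + z_2 z_0^2 + z_0 z_1^2.
\]
The quantity $S_1$ is cyclic but not symmetric; it satisfies $S_1 + S_2 = \sigma_1\sigma_2 - 3\sigma_3$ and $S_2 - S_1 = (z_0-z_1)(z_1-z_2)(z_0-z_2)$, whose square is the discriminant of the minimal polynomial ($49$ and $81$, respectively). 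The sign of $S_2 - S_1$ is fixed by the cyclic ordering induced by $f_{-2}$, checked numerically against the approximate orbit values; this yields $S_1(\cO_1) = -4$ and $S_1(\cO_2) = -3$, whence direct substitution gives $\lambda_1'(-2) = -16$ and $\lambda_2'(-2) = +24$, so $\lambda_1'(-2) + \lambda_2'(-2) = 8 > 0$.

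The main obstacle is not the arithmetic but the sign bookkeeping: correctly assigning $p(-2)$ and $\wtp(-2)$ to the two orbits, and fixing the sign of $S_2 - S_1 = \pm\sqrt{\mathrm{disc}}$ from the cyclic ordering imposed by $f_{-2}$. Once these sign conventions are pinned down, the remaining calculation is routine.
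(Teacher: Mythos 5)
Your proof is correct, and I have checked each step: the identification of the two period-$3$ orbits via the Chebyshev semiconjugacy, the minimal polynomials $x^3+x^2-2x-1$ and $x^3-3x+1$ with $\sigma_3=1$ and $-1$ so that $\lambda_1(-2)=8$ and $\lambda_2(-2)=-8$; the assignment $p(-2)=2\cos(4\pi/7)$, $\wtp(-2)=2\cos(4\pi/9)$ from the paper's description of $Y_c\cap\R$ (negative reals) and $\tY_c\cap\R$ (positive reals) together with $\alpha(-2)=-1$; the reduction of the target inequality to $\lambda_1'(-2)+\lambda_2'(-2)>0$ via the signs of the multipliers; the implicit differentiation yielding $(1-\lambda)\dot\lambda/8=4(\sigma_2^2-2\sigma_1\sigma_3)+2S_1+\sigma_2$; and the arithmetic $\lambda_1'(-2)=-16$, $\lambda_2'(-2)=+24$. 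The paper does not reproduce its own proof here (it cites Lemma~A.1 of the companion paper), so a line-by-line comparison is not possible, but your direct-computation route is the natural one and certainly establishes the claim.

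One small improvement: the sign of $S_2-S_1=(z_0-z_1)(z_1-z_2)(z_0-z_2)$ does not require numerics. In your cyclic labeling each orbit enters in strictly decreasing order, since $2\pi/7<4\pi/7<6\pi/7$ and $2\pi/9<4\pi/9<8\pi/9$ all lie in $(0,\pi)$ where cosine is strictly decreasing; thus $z_0>z_1>z_2$ in both cases, all three factors are positive, and $S_2-S_1=+\sqrt{\mathrm{disc}}$ (namely $+7$ and $+9$). This pins down $S_1(\cO_1)=-4$ and $S_1(\cO_2)=-3$ without approximating the cosines.
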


\begin{lemm}
 \label{l:Derivative of theta at -2}
We have~$\theta(-2) = 1$ and $D\theta(-2)>0$.
\end{lemm}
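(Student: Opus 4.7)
The plan is to treat the two claims separately, each reducing to classical facts at the Chebyshev parameter $c = -2$ together with the transversality statement recalled as Lemma~\ref{l:period 3 transversality}.

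For $\theta(-2) = 1$, I would invoke the standard semi-conjugacy $h(\theta) := 2\cos\theta$ between the doubling map $\theta \mapsto 2\theta$ on $\R/2\pi\Z$ and $f_{-2}(z) = z^2 - 2$ on $[-2, 2]$. The identity $f_{-2}^n \circ h = h \circ (2^n \cdot)$ yields, after differentiation,
\begin{equation*}
  Df_{-2}^n(h(\theta)) = 2^n \, \frac{\sin(2^n \theta)}{\sin \theta},
\end{equation*}
so at any periodic point of period~$n$, where $2^n \theta \equiv \pm \theta \pmod{2\pi}$, the multiplier has absolute value~$2^n$. Since the puzzle pieces $Y_c$ and $\tY_c$ are invariant under complex conjugation for real~$c$, the period-$3$ points $p(-2)$ and $\wtp(-2)$ are real, so the formula above applies and gives $|Dg_{-2}(p(-2))| = |Dg_{-2}(\wtp(-2))| = 8$, whence $\theta(-2) = 1$.

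For the second claim, I would take the logarithm of
\begin{equation*}
 \theta(c)^2 = \frac{|Dg_c(p(c))|}{|Dg_c(\wtp(c))|}
\end{equation*}
and differentiate in the real parameter~$c$. The holomorphic functions $c \mapsto Dg_c(p(c))$ and $c \mapsto Dg_c(\wtp(c))$ take the real, nonzero values $\pm 8$ at $c = -2$, so they have constant nonzero sign in a real neighborhood of $-2$; in particular $\theta$ is real-analytic there. Using $\theta(-2) = 1$ together with $|Dg_{-2}(p(-2))| = |Dg_{-2}(\wtp(-2))| = 8$, differentiation at $c = -2$ yields
\begin{equation*}
 2\, D\theta(-2) = \tfrac{1}{8}\left( \left.\tfrac{\partial}{\partial c}|Df_c^3(p(c))|\right|_{c=-2} - \left.\tfrac{\partial}{\partial c}|Df_c^3(\wtp(c))|\right|_{c=-2} \right),
\end{equation*}
which is strictly positive by Lemma~\ref{l:period 3 transversality}. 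There is no real obstacle in this argument; the only minor point is the justification that the modulus is smooth in~$c$ near~$-2$, which is handled by the sign observation just made.
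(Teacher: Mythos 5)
Your proof is correct and takes essentially the same approach as the paper: compute the period-$3$ multipliers at $c=-2$ via the Chebyshev semi-conjugacy $2\cos\theta \mapsto 2\cos 2\theta$ to get $\theta(-2)=1$, then differentiate $\theta^2 = |Dg_c(p(c))|/|Dg_c(\wtp(c))|$ at $c=-2$ and invoke Lemma~\ref{l:period 3 transversality}. The paper simply lists the two period-$3$ orbits in closed form and asserts the multipliers are $\pm 8$, whereas you derive the multiplier formula $Df_{-2}^n = 2^n\sin(2^n\theta)/\sin\theta$ explicitly and also spell out why $|Dg_c(\cdot)|$ is real-analytic near $c=-2$ (constant nonzero sign of the holomorphic function on the real line); these are welcome elaborations, not a different route.
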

\proof
For~$c = -2$,
$$ \{ 2 \cos (2\pi/7), 2 \cos (4 \pi/ 7), 2 \cos(6 \pi/7) \}$$
and
$$\{ 2 \cos (2\pi/9), 2 \cos (4 \pi/ 9), 2 \cos(8 \pi/9) \} $$
are the only orbits of minimal period~$3$ of $f_{-2}$.
Thus, we have
$$ |Df_{-2}^3(p(-2))| = |Df_{-2}^3(\wtp(-2))| = 8,
\text{ and }
\theta(-2) = 1. $$
Together with Lemma~\ref{l:period 3 transversality}, we obtain $D\theta(-2)>0$.
\endproof

\begin{lemm}\label{l:analyticity of xi}
We have $\zeta(-2)\cdot \tzeta(-2) = 1$, and the function~$\xi(c)$ is real analytic at $c = -2$.
\end{lemm}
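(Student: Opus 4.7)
My plan is to exploit the Chebyshev structure of $f_{-2}$. I begin by rewriting the infinite products defining $\zeta$ and $\tzeta$ in a telescoping form. Since the itinerary $\underline{x}^\ell$ is obtained from $\underline{x}^{\ell-1}$ by prepending a $0$, the shift identity $g_c^j(I_{\underline{x}^m}(c)) = I_{\underline{x}^{m-j}}(c)$ holds for $0 \le j \le m$, and the chain rule gives
\[
\prod_{\ell = 1}^{m} \frac{|Dg_c(I_{\underline{x}^\ell}(c))|}{|Dg_c(p(c))|}
=
\frac{|Dg_c^m(I_{\underline{x}^m}(c))|}{|Dg_c(p(c))|^m},
\]
so $\zeta(c) = \lim_{m \to +\infty} |Dg_c^m(I_{\underline{x}^m}(c))|/|Dg_c(p(c))|^m$, and symmetrically for $\tzeta(c)$.

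Next I specialize to $c = -2$. The relation $f_{-2}(2 \cos \phi) = 2 \cos(2 \phi)$ yields $|Df_{-2}^n(2\cos \phi)| = 2^n |\sin(2^n \phi)|/|\sin \phi|$, hence $|Dg_{-2}^m(2 \cos \phi)| = 8^m |\sin(8^m \phi)|/|\sin \phi|$. Write $p(-2) = 2 \cos \varphi$ and $\wtp(-2) = 2 \cos \tvarphi$. By the proof of Lemma~\ref{l:Derivative of theta at -2}, $|Dg_{-2}(p(-2))| = |Dg_{-2}(\wtp(-2))| = 8$. Writing $I_{\underline{x}^m}(-2) = 2 \cos \phi_m$, the identity $g_{-2}^m(I_{\underline{x}^m}(-2)) = \wtp(-2)$ forces $|\sin(8^m \phi_m)| = |\sin \tvarphi|$, so
\[
\frac{|Dg_{-2}^m(I_{\underline{x}^m}(-2))|}{8^m}
=
\frac{|\sin \tvarphi|}{|\sin \phi_m|}.
\]
Since $I_{\underline{x}^m}(-2) \to p(-2)$ and $\phi \mapsto 2 \cos \phi$ is a local diffeomorphism away from $\{0, \pm 2\}$, a consistent choice of lifts yields $\phi_m \to \varphi$, giving $\zeta(-2) = |\sin \tvarphi|/|\sin \varphi|$. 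The analogous computation with $\underline{\tilde x}^m$ gives $\tzeta(-2) = |\sin \varphi|/|\sin \tvarphi|$, and the product is $1$.

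For real analyticity of $\xi$ at $c = -2$, note that $\zeta$ and $\tzeta$ are real analytic and strictly positive near $c = -2$ by Lemma~\ref{l:homoclinic distortion}, while $\theta$ is real analytic and positive by its defining formula~\eqref{d:theta n}. Hence $\log(\zeta \tzeta)$ and $\log \theta$ are real analytic near $c = -2$. From $\zeta(-2) \tzeta(-2) = 1$ and $\theta(-2) = 1$, both vanish at $c = -2$, and by Lemma~\ref{l:Derivative of theta at -2} the derivative $D \log \theta(-2) = D\theta(-2) > 0$, so the zero of $\log \theta$ is simple. Writing $\log \theta(c) = (c + 2) k(c)$ and $\log(\zeta(c) \tzeta(c)) = (c + 2) h(c)$ with $h, k$ real analytic and $k(-2) \neq 0$, one gets $\xi(c) = -h(c)/(4 k(c))$, manifestly real analytic at $c = -2$.

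The main delicate step is the passage $\phi_m \to \varphi$, which requires picking the lifts $\phi_m \in [0, \pi]$ of $I_{\underline{x}^m}(-2)$ consistently; the argument goes through because $|\sin|$ is invariant under $\phi \mapsto - \phi$, rendering the remaining lift ambiguity irrelevant for the computation of $\zeta(-2)$ and $\tzeta(-2)$.
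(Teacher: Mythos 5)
Your proof is correct, and the first half takes a genuinely different route from the paper. To show $\zeta(-2)\tzeta(-2) = 1$, the paper uses the periodic points $p_m(c)$ with itinerary $(0^m 1^m)^\infty$: Proposition~\ref{p:Improved distortion estimate} gives $Dg_c^{2m}(p_m(c))/(Dg_c(p(c))Dg_c(\wtp(c)))^m \to \zeta(c)\tzeta(c)$, while at $c = -2$ every periodic multiplier of the Chebyshev map has modulus $2^{\mathrm{period}}$, so each ratio equals $1$ exactly — no further limit computation is needed. You instead observe that the shift identity $g_c^j(I_{\underline{x}^m}(c)) = I_{\underline{x}^{m-j}}(c)$ makes the defining infinite product telescope to $\zeta(c) = \lim_m |Dg_c^m(I_{\underline{x}^m}(c))|/|Dg_c(p(c))|^m$, and then you evaluate the single ratio at $c = -2$ via the Chebyshev derivative formula $|Dg_{-2}^m(2\cos\phi)| = 8^m|\sin(8^m\phi)|/|\sin\phi|$, using $g_{-2}^m(I_{\underline{x}^m}(-2)) = \wtp(-2)$ and $I_{\underline{x}^m}(-2) \to p(-2)$. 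This is more work than the paper's slick periodic-point identity, but it buys you more: explicit closed forms $\zeta(-2) = |\sin\tvarphi|/|\sin\varphi|$ and $\tzeta(-2) = |\sin\varphi|/|\sin\tvarphi|$ for each factor separately, not just their product. Your handling of the lift ambiguity (lifts in $[0,\pi]$, and $|\sin|$ even in $\phi$) is careful and correct, and is exactly the point that needs to be addressed; one could add that $\sin\phi_m \neq 0$ because $\Lambda_{-2} \subset P_{-2,1}(0)$ avoids $\pm 2$. The analyticity argument in your second paragraph — factoring $(c+2)$ out of both $\log(\zeta\tzeta)$ and $\log\theta$, with the latter having a simple zero by Lemma~\ref{l:Derivative of theta at -2} — is the same as the paper's.
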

\proof
Let~$c$ be a parameter in $\cP_5(-2)$.
For every integer $m\ge 1$ denote by~$p_m(c)$ the periodic point in~$\Lambda_c$ whose itinerary consists of the periodic sequence whose period is the concatenation of~$m$ consecutive~$0$'s and of~$m$ consecutive~$1$'s.
By Proposition~\ref{p:Improved distortion estimate}, 
$$
\frac{Dg_c^{2m}(p_m(c))}{(Dg_c(p(c))Dg_c(\widetilde p(c)))^m} \to \zeta(c)\tzeta(c)  \quad \text{ as } m\to +\infty. 
$$
On the other hand, using the identity $f_{-2}(2\cos(x)) = 2 \cos(2x)$ for~$x$ in~$\R$, we obtain
$$
\frac{Dg_{-2}^{2m}(p_m(-2))}{(Dg_{-2}(p(-2))Dg_{-2}(\widetilde p(-2)))^m} = 1.
$$
This proves~$\zeta(-2) \cdot \tzeta(-2) = 1$.
To prove that~$\xi$ is real analytic at~$c = - 2$, notice that each of the functions~$\theta$, $\zeta$, and~$\tzeta$ is real analytic at $c= -2$  (Proposition~\ref{p:Improved distortion estimate}).
Since $\zeta(-2)\cdot \tzeta(-2) = 1$ and   $\theta(-2) = 1$ (Lemma~\ref{l:Derivative of theta at -2}), we have that the functions~$A$ and~$B$ defined for~$c$ in~$\cP_5(-2)$ by
$$ A(c)\= \log (\zeta(c) \tzeta(c)) / (c + 2)
\text{ and }
B(c) \= \log \theta(c) / (c + 2), $$
are also real analytic at $c = -2$.
Moreover, $B(-2) \neq 0$ since by Lemma~\ref{l:Derivative of theta at -2} we have $D\theta(-2) \neq 0$.
Thus, the quotient~$4\xi(c) = A(c)/B(c)$ is real analytic at $c = -2$.
This concludes the proof of the lemma.
\endproof

\proof[Proof of Proposition~\ref{p:transition divergence}]
By Lemma~\ref{l:Derivative of theta at -2}, there is $\delta>0$ such that for
every~$c$ in $(-2, -2 + \delta)$ we have
$$ 1 < \theta(c) < 2^{1/T}. $$ 
On the other hand, by Proposition~\ref{p:ps}, there is an integer $n_0\ge 3$ such that for every $n\ge n_0$ we have
$\cK_n \subset (-2, -2 + \delta)$.
These assertions imply the second part of the proposition.

To prove the first part, notice that by Lemma~\ref{l:analyticity of xi} there is $\epsilon > 0$ such that~$\xi(c)$ is uniformly continuous on the interval $[-2, -2 + \epsilon]$.
By Proposition~\ref{p:ps}, for every sufficiently large integer~$n$ we have $\cK_n \subset [-2, -2 + \epsilon]$ and moreover the diameter of~$\cK_{n}$ converges to~$0$ as $n \to +\infty$.
Thus, for every sufficiently large~$n$ we have
$$ \xi_{n} - \xi(c) < \frac{1}{2}. $$
This implies the first assertion of the proposition and concludes the proof.
\endproof

\subsection{Proof of the Main Technical Theorem}
\label{ss:proof of Main Technical Theorem}
We start recalling some results from~\cite{CorRiva}.
\begin{prop}[\cite{CorRiva}, Proposition~D]
\label{p:improved MS criterion}
There is an integer~$n_5 \ge 4$ and a constant~$C_3 > 1$, such that for every integer~$n \ge n_5$ and every parameter~$c$ in~$\cK_n$, the following properties hold for each~$t \ge 3$:
\begin{enumerate}
\item[1.]
For~$p$ in~$[- t \chicrit/2, 0)$ satisfying
$$ \sum_{k = 0}^{+ \infty} \exp(- (n + 3k)p)|Df_c^{n + 3k} (c)|^{-t/2}
\ge
C_3^{t}, $$
we have~$\sP_c^{\R}(t, p) > 0$ and~$P_c^{\R}(t) \ge p$.
\item[2.]
For~$p \ge - t \chicrit/2$ satisfying
$$ \sum_{k = 0}^{+ \infty} \exp(- (n + 3k)p)|Df_c^{n + 3k} (c)|^{-t/2}
\le
C_3^{-t}, $$
we have~$\sP_c^{\C}(t, p) < 0$ and~$P_c^{\C}(t) \le p$.
\end{enumerate}
\end{prop}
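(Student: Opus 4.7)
The strategy is to pass through the induced pressure $\sP_c^\R(t,p)$ and $\sP_c^\C(t,p)$ of the first-return map $F_c$ from~\S\ref{ss:induced map}, for which one has the standard Abramov-type equivalences
$$
\sP_c^\R(t,p)>0 \iff P_c^\R(t)>p, \qquad \sP_c^\C(t,p)<0 \iff P_c^\C(t)<p,
$$
valid provided equilibrium states detected by the inducing scheme do not concentrate on the dissipative backward orbit of~$0$ described in Proposition~\ref{p:conformal measures}. The hypothesis $t\ge 3$ lies well above the first zero of the geometric pressure function and rules out this pathology, so it suffices to translate the assumed bound on the postcritical series into the corresponding sign of $\sP_c^\R$ or~$\sP_c^\C$.

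The heart of the argument is the two-sided comparison, uniform for $n\ge n_5$ and $c\in\cK_n$,
$$
\widehat C^{-t}\sum_{k\ge 0}\exp(-(n+3k)p)\,|Df_c^{n+3k}(c)|^{-t/2}
\;\le\; Z_1(t,p) \;\le\;
\widehat C^{\,t}\sum_{k\ge 0}\exp(-(n+3k)p)\,|Df_c^{n+3k}(c)|^{-t/2},
$$
and the analogous bound for $Z_1^{\R}(t,p)$. The combinatorial step is to identify, for each $k\ge 0$, the components $W\in\fD_c$ with $m_c(W)=n+3k$: they correspond to orbits that re-enter $V_c=P_{c,n+1}(0)$ after shadowing the critical orbit for exactly $n+3k$ steps, and their number is bounded independently of $n$ by the puzzle geometry around $\Lambda_c$. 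The analytic step decomposes $F_c|_W=f_c^{n+3k}|_W$ into a univalent factor to which the distortion estimate of Lemma~\ref{l:distortion to central 0} applies, and one local branch of $f_c^{-1}$ near the critical value $c$. Because $f_c$ is quadratic at $0$, a preimage of a point at distance $r$ from $c$ sits at distance $r^{1/2}$ from $0$; combining this square-root behavior with the univalent shadowing estimate $|Df_c^{n+3k-1}(w)|\asymp|Df_c^{n+3k-1}(c)|$ yields $\sup_{V_c}|D\phi_W|\asymp|Df_c^{n+3k}(c)|^{-1/2}$ uniformly in $c\in\cK_n$, which is precisely the exponent $-t/2$ that appears on the right-hand side.

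Once this comparison is established, the passage from $Z_1$ to $\sP_c$ is routine. Since every composition $\phi_{W_1\cdots W_\ell}$ extends biholomorphically to $\hV_c\supset V_c$ (\cite[Lemma~$6.1$]{CorRiva}), the Koebe distortion principle yields a constant $K>1$, independent of $n$ and $c$, with
$$
K^{-t\ell}\,Z_1(t,p)^\ell \;\le\; Z_\ell(t,p) \;\le\; Z_1(t,p)^\ell,
$$
and an analogous statement for $Z_\ell^\R$. Taking $\ell$-th roots and passing to the limit gives $\log Z_1(t,p)-t\log K \le \sP_c^\C(t,p) \le \log Z_1(t,p)$, and similarly for~$\sP_c^\R$. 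Taking $C_3$ strictly larger than $\widehat C\cdot K$, the hypothesis of part~(1) forces $Z_1^\R(t,p)>K^t$ and hence $\sP_c^\R(t,p)>0$, which by the Abramov equivalence yields $P_c^\R(t)\ge p$; the hypothesis of part~(2) forces $Z_1(t,p)<K^{-t}$, hence $\sP_c^\C(t,p)<0$ and $P_c^\C(t)\le p$.

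The main difficulty is the sharp factor $1/2$ in the derivative estimate, which must be preserved uniformly in $n$ and $c\in\cK_n$. Getting it right requires isolating the unique $f_c^{-1}$-ramification at the critical value in each return branch and controlling the univalent remainder with the distortion bound of Lemma~\ref{l:distortion to central 0}; uniformity in the parameter comes from the compactness of $\cK_n$ inside $\cP_n(-2)$ (Proposition~\ref{p:ps}) and the para-puzzle nesting of Lemma~\ref{l:auxiliary para-puzzle pieces}. The threshold $n_5$ enters through the combinatorial bound on the number of first-return branches of a given return time, which can fail for small $n$ when the puzzle is not yet fine enough to separate the forward orbit of the critical point from a deep visit to the central puzzle piece.
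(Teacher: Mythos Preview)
This proposition is not proved in the present paper: it is quoted verbatim from the companion paper~\cite{CorRiva} as Proposition~D, and is used here as a black box in~\S\ref{ss:proof of Main Technical Theorem}. So there is no in-paper argument to compare your sketch against.

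That said, your outline is the natural one and is almost certainly the skeleton of the argument in~\cite{CorRiva}: compare the first-level partition function~$Z_1$ (resp.~$Z_1^{\R}$) to the postcritical series, then pass to~$\sP_c$ by sub/super-multiplicativity, and finally link~$\sP_c$ to~$P_c$. Two points deserve more care than you give them. First, the implication $\sP_c^{\C}(t,p)<0\Rightarrow P_c^{\C}(t)\le p$ is not a formality: it requires knowing that the inducing scheme captures \emph{all} of the geometric pressure (equivalently, that $\sP_c^{\C}(t,P_c^{\C}(t))\ge 0$), which in~\cite{CorRiva} is a separate structural statement about the Yoccoz-puzzle return map, not a consequence of ``equilibrium states not concentrating on the backward orbit of~$0$''. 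Second, your combinatorial claim that the elements of~$\fD_c$ with a given return time are in bounded-to-one correspondence with the postcritical iterates~$f_c^{n+3k}(c)$ is correct but not obvious from what is written here; it uses the specific itinerary structure of~$\cK_n$ (the critical orbit enters~$\Lambda_c$ after exactly~$n$ steps and is then trapped by~$g_c$), and the enumeration of return branches in~\cite[\S6]{CorRiva}. With those two ingredients supplied, your derivative estimate $\sup_{V_c}|D\phi_W|\asymp |Df_c^{n+3k}(c)|^{-1/2}$ and the ensuing chain of inequalities are exactly right.
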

\begin{lemm}[\cite{CorRiva}, Proposition~$6.2$]
\label{l:critical line}
For every integer $n \ge 5$, every parameter~$c$ in~$\cK_n$, and every~$t > 0$, we have
$$ P_c^{\C}(t) \ge P_c^{\R}(t) \ge - t \chicrit / 2. $$
\end{lemm}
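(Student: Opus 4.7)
The claim combines two inequalities that I would handle separately. The first, $P_c^{\C}(t)\ge P_c^{\R}(t)$, is essentially formal: as noted in~\S\ref{ss:Parameters}, for $c\in\cK_n$ the invariant interval $I_c$ is contained in the Julia set $J_c$, so $\sM_c^{\R}\subseteq\sM_c^{\C}$, and the variational supremum of the same functional $\mu\mapsto h_\mu(f_c)-t\int\log|Df_c|\,d\mu$ over the larger set cannot be smaller.

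For the substantive inequality $P_c^{\R}(t)\ge -t\chicrit/2$, my plan is to use the preimage-sum characterization of real pressure already invoked in the proof of Proposition~\ref{p:conformal measures} (the real analog of \cite[Theorem~A]{PrzRivSmi04}):
\[
P_c^{\R}(t)=\lim_{n\to+\infty}\frac{1}{n}\log\sum_{y\in(f_c|_{I_c})^{-n}(0)}|Df_c^n(y)|^{-t}.
\]
It then suffices to exhibit, along a subsequence $n_k\to+\infty$, a single real preimage $y_{n_k}\in(f_c|_{I_c})^{-n_k}(0)$ whose normalized log-derivative $\frac{1}{n_k}\log|Df_c^{n_k}(y_{n_k})|$ tends to $\chicrit/2$; the one-term lower bound on the preimage sum then yields the required inequality on passing to the log and dividing by $n_k$.

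The construction of $y_{n_k}$ exploits the quadratic ramification at the critical point. Since $c$ is the unique critical value of $f_c$ and $f_c(z)-c=z^2$, any $f_c$-preimage of a point within distance $\epsilon$ of $c$ on $\R$ lies within $\sqrt{\epsilon}$ of $z=0$, with $|Df_c|$ at that preimage of order $\sqrt{\epsilon}$. Choose $m_k\to+\infty$ realizing the liminf in the definition of $\chicrit$. The plan is to build a backward orbit ending at $z=0$ of length $n_k=m_k+O(1)$ by (a) pulling back a bounded number of steps to enter a small neighborhood of $c$, using topological exactness of $f_c|_{I_c}$; (b) taking the real preimage close to $z=0$; and (c) shadowing the critical orbit backward for $m_k$ more steps through univalent inverse branches controlled by the uniform distortion estimate of Lemma~\ref{l:distortion to central 0}. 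The single small factor $|Df_c|\sim\sqrt{\epsilon}$ picked up near $z=0$, combined with the distortion-bounded shadowing, produces
\[
|Df_c^{n_k}(y_{n_k})|\asymp|Df_c^{m_k}(c)|^{1/2},
\]
so that $\frac{1}{n_k}\log|Df_c^{n_k}(y_{n_k})|\to \chicrit/2$, as desired.

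The principal obstacles are verifying that the chosen preimage is real (which requires locating the intermediate pullback point to the right of $c$ on the real axis, where the relevant square roots are real) and that the distortion accumulated along the shadowing portion of the backward orbit does not degrade the $1/2$ factor asymptotically. Both are controlled by the non-recurrence of the critical point forced by $c\in\cK_n$: the relevant segment of the postcritical orbit lies in the uniformly hyperbolic Cantor set $\Lambda_c$, so Lemma~\ref{l:distortion to central 0} applies with constants independent of $m_k$, and the univalent pullback needed in step~(c) is available throughout. The error incurred in the preceding $O(1)$-step preparation is negligible after dividing by $n_k$, yielding the claimed lower bound.
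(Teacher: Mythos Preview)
The paper does not prove this lemma; it is imported from~\cite[Proposition~6.2]{CorRiva}. Your first inequality is correct, and your overall strategy for the second---the preimage-sum formula for pressure, a shadowing construction along the critical orbit, and the square-root effect of the quadratic critical point---is the standard route to the factor~$1/2$. But the backward orbit you describe is assembled in the wrong order and, as written, does not make sense. After step~(b) you are at a point near~$0$; step~(c) then asks you to ``shadow the critical orbit backward for~$m_k$ steps through univalent inverse branches'' from there. There is nothing to shadow: the preimages of a point near~$0$ lie near~$\pm\sqrt{-c}$, nowhere near any point of the forward critical orbit $0,c,f_c(c),\ldots$. The shadowing has to occur \emph{before} the passage through the critical point, not after. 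A correct (and simpler) version reverses the roles: since both~$0$ and~$f_c^{m_k}(c)$ lie in~$P_{c,1}(0)$ (for~$m_k\ge n$ along the subsequence~$m_k\equiv n\pmod 3$, which still realises the liminf), pull back the point~$0$ by the univalent branch of~$f_c^{-m_k}$ sending $f_c^{m_k}(c)\mapsto c$, whose distortion is controlled by Lemma~\ref{l:distortion to central 0}; this lands at~$v_k$ near~$c$, and then $y_k=\sqrt{v_k-c}$ with~$n_k=m_k+1$. Your step~(a) is unnecessary.

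You also assert the key estimate $|Df_c^{n_k}(y_{n_k})|\asymp|Df_c^{m_k}(c)|^{1/2}$ without explaining where the exponent~$1/2$ comes from; in your write-up the scale~$\epsilon$ is never tied to~$m_k$. It is forced by the \emph{size} of the pullback: bounded distortion gives $|v_k-c|\asymp|f_c^{m_k}(c)-0|\cdot|Df_c^{m_k}(c)|^{-1}\asymp|Df_c^{m_k}(c)|^{-1}$ (the first factor is bounded because $f_c^{m_k}(c)\in\Lambda_c$ stays uniformly away from~$0$), hence $|y_k|=|v_k-c|^{1/2}\asymp|Df_c^{m_k}(c)|^{-1/2}$, and the chain rule yields $|Df_c^{m_k+1}(y_k)|=|Df_c^{m_k}(v_k)|\cdot 2|y_k|\asymp|Df_c^{m_k}(c)|^{1/2}$. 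With the steps in your order, step~(a) fixes~$\epsilon$ at a scale independent of~$m_k$, so the exponent~$1/2$ could never emerge.
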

\begin{lemm}[\cite{CorRiva}, Lemma~$6.5$]
\label{l:Poincare series}
There is~$n_6 \ge 5$, such that for every integer~$n \ge n_6$, every~$c$ in~$\cK_n$, and every~$t \ge 3$ and
$$ p \ge P_c^{\R}(t)
\left( \text{resp. } p \ge P_c^{\C}(t) \right) $$
satisfying~$\sP_c^{\R}(t, p) < 0$ (resp. $\sP_c^{\C}(t, p) < 0$), the
sum~\eqref{e:Poincare series} is finite.
\end{lemm}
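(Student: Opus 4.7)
The plan is to pull the Poincaré series~\eqref{e:Poincare series} back to the induced map $F_c : D_c \to V_c$ and exploit the hypothesis $\sP_c^{\C}(t,p) < 0$ (resp.\ $\sP_c^{\R}(t,p) < 0$) to obtain geometric decay of the partition sums $Z_\ell$ (resp.\ $Z_\ell^{\R}$). Since $0 \in V_c = P_{c, n+1}(0)$, the ends of all backward orbits from $0$ sit in the domain of the induced system.

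The key decomposition is as follows: for each $y \in f_c^{-j}(0)$ record the times $0 \le k_0 < k_1 < \cdots < k_\ell = j$ at which the forward orbit of $y$ lies in $V_c$. Setting $z_i \= f_c^{k_i}(y)$, we have $z_{i+1} = F_c(z_i)$ and $z_\ell = 0$, so (when $\ell \ge 1$) $z_0 = \phi_{\underline W}(0)$ for a unique $\underline W = W_1 \cdots W_\ell \in E_{c,\ell}$ with $k_\ell - k_0 = m_c(\underline W)$. The passage $y \mapsto z_0$ is realised by a univalent inverse branch $h$ of $f_c^{k_0}$ whose image is disjoint from $V_c$. Writing $|Df_c^j(y)|^{-t} = |Dh(z_0)|^t \cdot |D\phi_{\underline W}(0)|^t$, the Poincaré series is dominated by a product of
\[
\Xi_{\mathrm{in}} \= 1 + \sum_{\ell \ge 1} \sum_{\underline W \in E_{c,\ell}} e^{-m_c(\underline W)\, p}\, \sup_{V_c} |D\phi_{\underline W}|^t
\]
and
\[
\sup_{z_0 \in V_c} \Xi_{\mathrm{out}}(z_0),
\qquad
\Xi_{\mathrm{out}}(z_0) \= \sum_{k_0 \ge 0} \sum_{h \in \cH_{k_0}(z_0)} e^{-k_0 p}\, |Dh(z_0)|^t,
\]
where $\cH_{k_0}(z_0)$ is the set of univalent inverse branches of $f_c^{k_0}$ at $z_0$ whose range avoids $V_c$ (the case $\ell = 0$ reduces to $\Xi_{\mathrm{out}}(0)$). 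In the real case $E_{c,\ell}$ is replaced by $E_{c,\ell}^{\R}$ and branches are taken in the interval.

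The first factor equals $1 + \sum_{\ell \ge 1} Z_\ell(t,p)$, which is finite because $\tfrac{1}{\ell}\log Z_\ell(t,p) \to \sP_c^{\C}(t,p) < 0$ forces exponential decay of $Z_\ell(t,p)$. The main technical obstacle is proving that $\Xi_{\mathrm{out}}(z_0)$ is bounded uniformly in $z_0 \in V_c$. For this I would use that each branch $h \in \cH_{k_0}(z_0)$ extends, by Lemma~\ref{l:distortion to central 0}, biholomorphically to $\hat V_c$ with distortion at most $\Delta_0$; hence $|Dh(z_0)|^t$ is comparable to $\diam(h(\hat V_c))^t$ up to a constant depending only on $\diam(\hat V_c)$ and $\Delta_0$. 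For fixed $k_0$ the sets $h(\hat V_c)$ are pairwise disjoint puzzle pieces contained in $J_c \setminus V_c$, and the escape dynamics off $V_c$ is uniformly hyperbolic, so at each step a definite fraction of branches returns to $V_c$. Combining these facts shows that $\sum_{h \in \cH_{k_0}(z_0)} |Dh(z_0)|^t$ decays geometrically in $k_0$ at a rate strictly smaller than $e^{-p}$, so $\Xi_{\mathrm{out}}(z_0)$ is summable uniformly in $z_0 \in V_c$. Multiplying the two bounds gives the finiteness of~\eqref{e:Poincare series}.
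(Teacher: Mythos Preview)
The paper does not give its own proof of this lemma; it is quoted verbatim from the companion paper~\cite{CorRiva} (Lemma~6.5 there), so there is nothing in the present paper to compare against. I can only assess your argument on its own merits.

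Your decomposition of the Poincar\'e series into an ``inner'' factor $\Xi_{\mathrm{in}}$ governed by the induced map and an ``outer'' first-landing factor $\Xi_{\mathrm{out}}$ is the natural one, and the bound $\Xi_{\mathrm{in}} = 1 + \sum_{\ell\ge 1} Z_\ell(t,p) < \infty$ via $\sP_c(t,p)<0$ is correct.

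The genuine gap is in your control of $\Xi_{\mathrm{out}}$. You assert that $\sum_{h\in\cH_{k_0}(z_0)}|Dh(z_0)|^t$ decays geometrically in~$k_0$ fast enough to beat $e^{-k_0 p}$, but neither of the reasons you give yields this. Uniform hyperbolicity of the escape set off~$V_c$ bounds each individual $|Dh(z_0)|$ from above, not the \emph{weighted} sum over branches; and ``at each step a definite fraction of branches returns to $V_c$'' is an unweighted counting statement that says nothing about how the $|Dh|^t$-mass splits. What you actually need is that the exponential growth rate of the restricted branch sum is \emph{strictly} below~$p$. Since the hypothesis allows $p = P_c^{\R}(t)$ (resp.\ $p = P_c^{\C}(t)$) exactly, this amounts to showing that the pressure of $-t\log|Df_c|$ on the maximal invariant set outside~$V_c$ is strictly below the full pressure $P_c(t)$ for the relevant range of~$t$---a genuinely nontrivial comparison that depends on the specific structure of parameters in~$\cK_n$ (in particular on~$n$ being large so that~$c$ is close to~$-2$). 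Your sketch does not address this point, and without it $\Xi_{\mathrm{out}}$ could a~priori diverge.

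A smaller point: Lemma~\ref{l:distortion to central 0} concerns univalent pullbacks of $P_{c,1}(0)$ extending to~$\hW_c$, not pullbacks of~$V_c$ extending to~$\hV_c$; the extension you want for the landing branches follows instead from the fact that the post-critical orbit never enters~$\hV_c$, together with~\cite[Lemma~6.1]{CorRiva} as invoked in~\S\ref{ss:induced map}.
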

\begin{proof}[Proof of the Main Technical Theorem]
Let~$C_2$ and~$\upsilon_2$ be the constants given by Proposition~\ref{p:2 variables series}, $n_5$ and $C_3$ the constants given by Proposition~\ref{p:improved MS criterion}, and~$n_6$ the constant given by Lemma~\ref{l:Poincare series}.
Since for~$c = - 2$ we have
$$ |Dg_{-2} (\wtp(-2))|^{1/3} = 2
\text{ and }
|Df_{-2} (\beta(-2))| = 4, $$
there is~$\delta > 0$ such that that for each~$c$
in~$(-2, -2 + \delta)$ we have
\begin{equation}
\label{e:first entrance derivative}
\frac{|Dg_c(\wtp(c))|^{1/3}}{|f_c(\beta(c))|}
<
\frac{2}{3}.
\end{equation}
By Proposition~\ref{p:ps} there is $n_0 \ge 3$ such that for every integer~$n\ge n_0$ the set~$\cK_n$ is contained in~$(-2, -2 + \delta)$; thus for every~$c$ in~$\cK_n$ we have~\eqref{e:first entrance derivative}.
Since the closure of~$\cP_6(-2)$ is contained in~$\cP_5(-2)$ (part~$1$ of Lemma~\ref{l:auxiliary para-puzzle pieces}), by Proposition~\ref{p:Improved distortion estimate} we have,
\[
Z
\=
\sup_{c\in \cP_6(-2)} - \frac{\log (\zeta(c)\tzeta(c)) }{ 4\log 2}
<
+ \infty.
\]
Fix~$n \ge \max \{ 6,  n_0, n_5, n_6 \}$ large enough such that
\begin{equation}
\label{e:bound with Z}
C_2 \left( \tfrac{2}{3} \right)^{\frac{1}{2}n}
\left(10\cdot 2^{Z}  + 101\right)
< C_3^{-1}.
\end{equation}
In view of Proposition~\ref{p:transition divergence}, we can take~$n$ larger if 
necessary so that for every~$c$ in~$\cK_n$ we have
$$ t(c) = \frac{\log 2}{\log \theta(c)} \ge 6 $$
and such that, if we put
\[
\Xi \= \left\lceil 2\xi_{n} + 1\right\rceil,
\]
then for every~$c$ in~$\cK_{n}$ we have~$\Xi - 2 \xi(c) \le 2$.
Consider the sequence~$(x_j)_{j = 0}^{+ \infty}$ in~$\{0, 1\}^{\N_0}$ defined
in~\S\ref{ss:the itinerary} for this value of~$\Xi$ and for some 
integer~$q \ge 3$ satisfying in addition,
$$ q + \Xi \ge 1 \text{ and }~2^{q - 1} \ge q + 1 + \Xi.$$
By Proposition~\ref{p:ps}, there is a parameter~$c$ in~$\cK_{n}$ such that
$\iota(c) = (x_j)_{j = 0}^{+ \infty}$.
Finally, put~$t_* \= t(c)$, and fix $\Delta \ge  1$ sufficiently large such that
\begin{equation}
\label{eq:bound Lambda from below}
C_2^{-t_*} \exp(-n )\left( \frac{\exp(\chicrit)}{|Df_c(\beta(c))|}
\right)^{\frac{t_*}{2} n } ~2^{\Delta - 4}
>
C_3^{t_*}.
\end{equation}
Put
$$ t_0 \= \frac{q - 2}{q - 1} t_*,
\xi \= \xi(c),
\text{ and }
\kappa \= \Xi - 2 \xi, $$
and define the functions
$$ \delta^+, \delta^-, p^+, p^- : (t_0, + \infty) \to \R $$
as in the statement of the Main Technical Theorem.
Taking~$\Delta$ larger if necessary, assume that for every~$t$ in~$(t_0, + \infty)$ we have~$p^-(t) < 0$.

We start showing that~$c$ satisfies the hypotheses of Proposition~\ref{p:2 variables series}.
By~\eqref{eq:N J} and~\eqref{eq:N I}, we have
\begin{equation}
  \label{e:mostly 1's}
\frac{N_c(k)}{k} \to 0 \text{ as } k\to +\infty.
\end{equation}
Denote by~$(m_j)_{j = 0}^{+ \infty}$ the sequence of lengths of the blocks of~$0$'s and~$1$'s in~$\iota(c)$.
So, using the notation in~\S\ref{ss:the itinerary}, for every integer~$s \ge 0$ we have~$m_{2s} = |I_s|$ and~$m_{2s + 1} = |J_s|$.
By part~(a) of Lemma~\ref{l:the itinerary} we have for every integer~$s \ge 0$
$$ \min \{ m_{2s}, m_{2s + 1} \}
=
\min \{ |I_s|, |J_s| \}
=
|I_s|
=
q(2s + 1) + \Xi, $$
and for every integer~$s \ge 1$
$$ \min \{ m_{2s + 1}, m_{2s + 2} \}
=
\min \{ |J_s|, |I_{s + 1}| \}
=
|I_{s + 1}|
=
q(2s + 3) + \Xi. $$
Thus
\begin{equation*}
 \sum_{j=2}^{+\infty} \exp(-\min\{m_j,m_{j+1}\} \upsilon_2)
\\ \le
2 \sum_{s=1}^{+\infty} \exp(- (q(2s + 1) + \Xi) \upsilon_2)
< 
+ \infty.
\end{equation*}
This proves~$c$ satisfies the hypotheses of Proposition~\ref{p:2 variables series}.

Note that by our choice of~$n$ and the hypothesis~$q \ge 3$, we have
$$ t_0 = \frac{q - 2}{q - 1} t_* \ge \frac{1}{2} t_* \ge 3. $$
On the other hand, by~\eqref{e:mostly 1's} and Lemma~\ref{lem:chicrit} we have
\begin{equation}
\label{e:Lyapunov formula}
\exp(\chicrit) = |Dg_c(\wtp(c))|^{1/3}.
\end{equation}
In particular, $\chicrit > 0$.
Consider the~$2$ variables series~$\Pi$ defined as in~\S\ref{ss:estimating 2 variables series} for the above choices of~$\Xi$, $q$, and~$\xi$, and note that it coincides with the~$2$ variables series~$\Pi_c$ defined in~\S\ref{ss:2 variables series} for our choice of the parameter~$c$.

To prove that for every~$t > t_0$ we have~$P_c^{\R}(t) \ge p^-(t)$, note first that when~$t \ge t_*$ this is given by Lemma~\ref{l:critical line}.
On the other hand, from Propositions~\ref{p:2 variables series} and~\ref{p:estimating 2 variables series}, \eqref{e:first entrance derivative}, \eqref{eq:bound Lambda from below}, \eqref{e:Lyapunov formula}, and from the fact that for every~$t$ in~$(t_0, t_*)$ we have~$\delta_-(t) \le 1$, we deduce
\begin{equation*}
\begin{split}
~& \sum_{k=0}^{+\infty} \exp(- (n+3k) p^-(t))
|Df_c^{n+3k}(c)|^{-\frac{t}{2}}\
\\ & =
\sum_{k=0}^{+\infty} \exp \left( -(n+3k) \left( - t \frac{\chicrit}{2} + \delta^-(t) \right) \right)
|Df_c^{n+3k}(c)|^{-\frac{t}{2}}
\\ & \ge
C_2^{-t} \exp(-n\delta^-(t))\left( \frac{\exp(\chicrit)}{|Df_c(\beta(c))|} \right)^{\frac{t}{2}n } ~\Pi\left(t  \frac{\log \theta(c)}{\log 2}, \frac{3\delta^-(t)}{\log 2} \right)
\\ & \ge
C_2^{-t} \exp(-n\delta^-(t))\left( \frac{\exp(\chicrit)}{|Df_c(\beta(c))|} \right )^{\frac{t}{2} n } ~2^{\Delta - 4}
\\ & >
C_3^{t_*}
\\ & \ge
C_3^{t}.
\end{split}
\end{equation*}
Since for each~$t$ in~$(t_0, t_*)$ we have~$p^-(t) < 0$, the inequality above
combined with part~$1$ of Proposition~\ref{p:improved MS criterion} implies that
for every~$t$ in $(t_0,t_*)$ 
we have~$P_c^{\R}(t) \ge p^-(t)$.

Now we turn to the proof that for every~$t > t_0$ we have~$P_c^{\C}(t) \le
p^+(t)$ and that for every~$t \ge t_*$ we have~$\sP_c^{\C}(t, - t
\frac{\chicrit}{2}) < 0$.
Combining Propositions~\ref{p:2 variables series} and~\ref{p:estimating 2 variables series}, using the definition of~$\xi = \xi(c)$ and~$Z$, and using~\eqref{e:first entrance derivative}, \eqref{e:bound with Z} and~\eqref{e:Lyapunov formula}, we deduce that for every $t \ge t_*$
\begin{equation}\label{e:bound series}
\begin{split}
& \sum_{k=0}^{+\infty} \exp(-(n+3k)p^+(t))
|Df_c^{n+3k}(c)|^{-\frac{t}{2}}
\\ & =
\sum_{k=0}^{+\infty} \exp\left(- \left(n+3k\right)\left(- t \frac{\chicrit}{2} \right)\right)
|Df_c^{n+3k}(c)|^{-\frac{t}{2}}
\\ & \le
C_2^t \left( \frac{\exp(\chicrit)}{|Df_c(\beta(c))|} \right)^{\frac{t}{2} n }\Pi\left(t  \frac{\log \theta(c)}{\log 2}, 0\right)
\\ & \le
C_2^t \left( \frac{\exp(\chicrit)}{|Df_c(\beta(c))|} \right)^{\frac{t}{2} n } 2 \left(2^{ t  \frac{\log \theta(c)}{\log 2} \xi} + 1 \right)
\\ & \le
C_2^t\left( \frac{\exp(\chicrit)}{|Df_c(\beta(c))|} \right)^{\frac{t}{2} n} 2 \left( 2^{tZ}  + 1 \right)
\\ & <
C_3^{-t}
\end{split}
\end{equation}
and that for every~$t$ in~$(t_0, t_*)$ we have
\begin{equation*}
\begin{split}
~& \sum_{k=0}^{+\infty} \exp(- (n+3k)p^+(t))
|Df_c^{n+3k}(c)|^{-\frac{t}{2}}
\\ & =
\sum_{k=0}^{+\infty} \exp \left(-(n+3k) \left( - t \frac{\chicrit}{2} + \delta^+(t)\right) \right)
|Df_c^{n+3k}(c)|^{-\frac{t}{2}}
\\ & \le
C_2^t \exp(-n\delta^+(t))\left( \frac{\exp(\chicrit)}{|Df_c(\beta(c))|}
\right
)^{\frac{t}{2}
n } ~\Pi \left(t  \frac{\log \theta(c)}{\log 2}, \frac{3\delta^+(t)}{\log
2} \right)
\\ & \le
C_2^t \exp(-n\delta^+(t))\left( \frac{\exp(\chicrit)}{|Df_c(\beta(c))|} \right)^{\frac{t}{2} n} \left( 10 \cdot 2^{t  \frac{\log \theta(c)}{\log 2} \xi} + 101 \right)
\\ & \le
C_2^t \exp(-n\delta^+(t))\left( \frac{\exp(\chicrit)}{|Df_c(\beta(c))|}
\right
)^{\frac{t}{2} n} \left( 10 \cdot 2^{t Z} + 101 \right)
\\ & <
C_3^{-t}.
\end{split}
\end{equation*}
Since for~$t > t_0$ we have $p^+(t) \ge -\frac{t}{2}\chicrit$, applying part~$2$
of Proposition~\ref{p:improved MS criterion} we deduce that for~$t > t_0$ 
we have~$P_c^{\C}(t) \le p^+(t)$ and~$\sP_c^{\C}(t, - t \frac{\chicrit}{2}) < 0$.

To prove the assertions concerning conformal measures, recall that we have proved that for~$t \ge t_*$ we have
$$ P_c^{\R}(t) = P_c^{\C}(t) = - t \chicrit/2 $$
and~$\sP_c^{\C} \left( t, - t \frac{\chicrit}{2} \right) < 0$.
This implies that for~$p \ge - t \chicrit/2$ we have
$$ \sP_c^{\R}(t, p)
\le
\sP_c^{\C} \left( t, p \right)
\le
\sP_c^{\C} \left( t, - t \frac{\chicrit}{2} \right) < 0. $$
So the assertions about conformal measures follow from Proposition~\ref{p:conformal measures} and Lemma~\ref{l:Poincare series}.

To prove the assertions about equilibrium states, let~$t \ge t_*$ be given and suppose by contradiction there is an equilibrium state~$\rho$ of~$f_c|_{I_c}$ (resp.~$f_c$) for the potential~$- t \log |Df_c|$.
Since~$f_c$ satisfies the Collet-Eckmann condition, it follows that the Lyapunov exponent of~$\rho$ is strictly positive, see~\cite[Theorem~A]{NowSan98} or~\cite[Main Theorem]{Riv1204} for the real case and~\cite[Main Theorem]{PrzRivSmi03} for the complex case.
Then~\cite[Theorem~$6$]{Dob1304} in the real case and~\cite[Theorem~$8$]{Dob12} in the complex case imply that~$\rho$ is absolutely continuous with respect to the $(t, - t \chicrit/2)$\nobreakdash-conformal measure for~$f_c|_{I_c}$ (resp.~$f_c$) that is supported on~$I_c$ (resp.~$J_c$).
This implies in particular that~$\rho$ is supported on the backward orbit of~$z = 0$ and hence that~$\rho$ charges~$z = 0$.
This is impossible because this point is not periodic.
This contradiction shows that there is no equilibrium state of~$f_c|_{I_c}$ (resp.~$f_c$) for the potential~$- t \log |Df_c|$ and completes the proof of the Main Technical Theorem.
\end{proof}

\bibliographystyle{alpha}

\end{document}